\documentclass[a4paper, 11pt]{article}

\usepackage{amsmath}
\usepackage{amssymb}
\usepackage{cite}
\usepackage{makeidx}
\usepackage{graphicx}
\usepackage{multirow}
\usepackage{booktabs}
\usepackage{appendix}
\usepackage{framed}
\usepackage[normalem]{ulem}
\usepackage{amsthm}
\usepackage{subfig}
\usepackage{diagbox}
\usepackage{algorithm}
\usepackage{color}
\usepackage{bigstrut}
\usepackage{algpseudocode}
\usepackage{amsfonts}
\usepackage[colorlinks,linkcolor=blue,anchorcolor=blue,citecolor=red]{hyperref}
\usepackage{enumerate}
\numberwithin{equation}{section}
\newcommand{\UP[1]}{^{(#1)}}
\newcommand\pd[2]{\frac{\partial {#1}}{\partial {#2}}}
\newcommand\pa{\partial}
\newcommand\rd{\mathrm{d}}
\newcommand\dd{\mathrm{d}}
\newcommand\bu{\boldsymbol{u}}
\newcommand\bg{\boldsymbol{g}}
\newcommand\bh{\boldsymbol{h}}
\newcommand\bm{\boldsymbol{m}}
\newcommand\bn{\boldsymbol{n}}
\newcommand\bi{\boldsymbol{i}}
\newcommand\bj{\boldsymbol{j}}

\newcommand\bq{\boldsymbol{q}}
\newcommand\bQ{\boldsymbol{Q}}
\newcommand\bv{\boldsymbol{v}}
\newcommand\bx{\boldsymbol{x}}
\newcommand\bw{\boldsymbol{w}}
\renewcommand\bf{\boldsymbol{f}}
\newcommand\bF{\boldsymbol{F}}
\newcommand\bsigma{\boldsymbol{\sigma}}
\newcommand\bT{\overline{\theta}}
\newcommand\oT{\overline{\theta}}
\newcommand\ou{\overline{\bu}}
\newcommand\ot{\overline{\te}}
\newcommand{\bz}{\boldsymbol{0}}
\newcommand\mF{\mathbb{F}}
\newcommand\bbH{\mathbb{H}}
\newcommand\mO{\mathcal{O}}
\newcommand\mQ{\mathcal{Q}}
\newcommand\mR{\mathbb{R}}
\newcommand\mT{\mathcal{T}}
\newcommand\bbR{\mathbb{R}}
\newcommand\mM{\mathcal{M}}
\newcommand\mH{\mathcal{H}}
\newcommand\mN{\mathbb{N}}
\newcommand\bbN{\mathbb{N}}
\newcommand\mZ{\mathbb{Z}}

\newcommand\sr{\sqrt{r}}
\newcommand\al{\alpha}
\newcommand\be{\beta}
\newcommand{\eps}{\varepsilon}

\newcommand\te{\theta}
\newcommand\ze{\zeta}

\newcommand\aut{_{\al}^{\ou,\oT}}

\newcommand\but{_{\be}^{\ou,\oT}}

\newcommand\Ha{H_{\al}}
\newcommand\Hai{H_{\al}\UP[i]}
\newcommand\Hl{H_{\la}}
\newcommand\Hli{H_{\la}\UP[i]}
\newcommand{\Hj}{H_{\bj}}
\newcommand\Hk{H_{\ka}}
\newcommand\Hki{H_{\ka}\UP[i]}
\newcommand\Hkp{H_{\ka'}}
\newcommand\Haj{H_{\al}\UP[j]}
\newcommand\mHai{\mH_{\al}\UP[i]}
\newcommand\mHli{\mH_{\lbd}\UP[i]}
\newcommand\mHkj{\mH_{\kp}\UP[j]}
\newcommand\mMut{\mM_{\ou,\oT}}
\newcommand\ga{\gamma}
\newcommand{\om}{\omega}
\newcommand\et{\eta}
\newcommand\de{\delta}
\newcommand\la{\lambda}
\newcommand\lbd{\lambda}
\newcommand\ka{\kappa}
\newcommand\kp{\kappa}
\newcommand{\Kn}{{\rm Kn}}

\newcommand\alk{_{\alpha,\lambda,\kappa}}
\newcommand\Aalk{A_{\alpha, \lambda, \kappa}^{(ij)}}

\newcommand\pq{\preceq}
\newcommand\ca[1]{a_{l'_{#1}k'_{#1}}^{l_{#1}k_{#1}}}

\newtheorem*{definition}{Definition}
\newtheorem{property}{Property}
\newtheorem{theorem}{Theorem}
\newtheorem{lemma}{Lemma}
\newtheorem{corollary}{Corollary}
\newtheorem{remark}{Remark}

\setlength{\oddsidemargin}{0cm} 
\setlength{\evensidemargin}{0cm}
\addtolength{\voffset}{-10mm}
\setlength{\textwidth}{160mm}
\setlength{\textheight}{230mm}

\graphicspath{{images/}}

\title{Hermite spectral method for multi-species Boltzmann equation}

\author{Ruo Li\thanks{CAPT, LMAM \& School of Mathematical Sciences,
    Peking University, Beijing, China, email: {\tt
      rli@math.pku.edu.cn}.},~~Yixiao Lu\thanks{HEDPS, Center for Applied Physics and Technology \& School of Mathematical
    Sciences, Peking University, Beijing, China, 100871, email: {\tt
      luyixiao@pku.edu.cn}.},~~Yanli Wang\thanks{Beijing Computational
    Science Research Center, email: {\tt ylwang@csrc.ac.cn}.},~~Haoxuan Xu\thanks{School of Mathematical
    Sciences, Peking University, Beijing, China, 100871, email: {\tt
      xhxxhx@pku.edu.cn}.}}

\begin{document}
\maketitle
\begin{abstract}
    We introduce a numerical scheme for the full multi-species Boltzmann equation based on Hermite spectral method. With the proper choice of expansion centers for different species, a practical algorithm is derived to evaluate the complicated multi-species binary collision operator. New collision models are built by combining the quadratic collision model and the simple BGK collision model under the framework of the Hermite spectral method, which enables us to balance the computational cost and accuracy.
    Several numerical experiments are implemented to validate the dramatic efficiency of this new Hermite spectral method. Moreover, we can handle the problems with as many as 100 species, which is far beyond the capability of the state-of-art algorithms.
    \vspace*{4mm}
    \newline
    \noindent \textbf{Keywords:} multi-species Boltzmann equation; 
    multi-species binary collision operator; Hermite spectral method
  \end{abstract}

\section{Introduction}
\label{sec:intro}
For rarefied gas flows where the molecular mean free path is comparable to the characteristic length, the continuum-fluid models such as Euler equations and Navier-Stokes equations are no longer accurate. The Boltzmann equation is an important model to study rarefied gas dynamics, which uses the distribution function to govern the dilute gas behavior at the mesoscopic scale. Gas mixtures with species diffusion, which play an essential role in the evaporation/condensation process, could also be described by the multi-species Boltzmann equation. For each species of the gases, it is represented by a distribution function $f^{(i)}(t, \bx, \bv)$, where $t$ is time, $\bx$ is the physical space, $\bv$ is the particle velocity space, and $(i)$ represents the kind of species. Thus, the multi-species Boltzmann equation, which describes a many-particle system comprised of a mixture of $s$ species, has the form below \cite{Cercignani1988, Harris1971}  
\begin{equation}
    \label{eq:Boltz}
    \pd{f\UP[i]}{t}+\bv\cdot\nabla_{\bx} f\UP[i]=\sum_{j=1}^s\mQ\UP[ij]
		[f\UP[i],f\UP[j]](\bv),\quad i=1,2,\cdots,s,
\end{equation}
where $\mQ\UP[ij]$ is the collision operator that models binary collisions
between $i$-th and $j$-th species:
\begin{equation}
    \label{eq:Collision}
    \begin{split}
        &\mQ\UP[ij][f\UP[i],f\UP[j]](\bv)=
		\int_{\mR^3}\int_{S^2}B_{ij}(|\bv-\bv_{\ast}|,\varsigma)
		\left[f\UP[i](\bv')f\UP[j](\bv'_{\ast})-f\UP[i](\bv)
		f\UP[j](\bv_{\ast})\right]\rd\varsigma \rd \bv_{\ast},
    \end{split}
\end{equation}
During the collisions, the momentum and energy are conserved. $(\bv, \bv_{\ast})$ and $(\bv', \bv'_{\ast})$ denote the pre-collision and post-collision velocity pairs satisfying 
\begin{equation}
    \label{eq:post-velo}
    \left\{
	\begin{array}{l}
	\bv'=\frac{m\UP[i]\bv+m\UP[j]\bv_{\ast}}{m\UP[i]+m\UP[j]}
	+\frac{m\UP[j]}{m\UP[i]+m\UP[j]}|\bv-\bv_{\ast}|\varsigma, \\
	\bv'_{\ast}=\frac{m\UP[i]\bv+m\UP[j]\bv_{\ast}}{m\UP[i]+m\UP[j]}
	-\frac{m\UP[i]}{m\UP[i]+m\UP[j]}|\bv-\bv_{\ast}|\varsigma,
	\end{array}
	\right.
\end{equation}
where $\varsigma$ is a unit vector indicating the direction of the post-collision relative velocity, and $m\UP[i],m\UP[j]$ are the mass of particles of $i$-th and $j$-th species respectively. 

The collision kernel $B_{ij}$ in \eqref{eq:Boltz} characterizes the interaction between the particles, which could be determined by the classical scattering theory based on the interaction potential as
\begin{equation}
    \label{eq:B}
    B_{ij}(|\bv -\bv_{\ast}|, \varsigma) =  |\bv - \bv_{\ast}| \frac{b_{ij}}{\sin \chi} \left|\frac{\dd b_{ij}}{\dd \chi}  \right|, \qquad \chi = \arccos\left(\frac{\varsigma \cdot (\bv - \bv_{\ast})}{|\bv - \bv_{\ast}|}\right), 
\end{equation}
with $\chi$ the deviation angle between $\bv- \bv_{\ast}$ and $\bv' - \bv_{\ast}'$, and $b_{ij}$ the impact factor \cite{Shashank2019}. 
For the multi-species Boltzmann equation, the most difficult part lies in the binary collision term due to its complex quadratic form and high-dimensional integral. Several collision models are introduced, such as the variable soft sphere model (VSS) \cite{Koura} by assuming 
\begin{equation}
    \label{eq:vss}
    \chi = 2 \cos^{-1}\left[ (b_{ij} / d_{ij})^{1/\alpha_{ij}}\right]
\end{equation}
where $d_{ij}$ is the diameter and $\alpha_{ij}$ is the scattering parameter. For the diameter $d_{ij}$, we choose the same as in \cite{Shashank2019}, which is borrowed from \cite[Eq. (4.79)]{Bird} as
\begin{equation}
    \label{eq:diame}
    d_{ij} = d_{{\rm ref}, ij} \left[  \left(\frac{2 k_B T_{{\rm ref}, ij}}{\mu_{ij} |\bv - \bv_{\ast}|^2} \right)^{\omega_{ij} - 0.5}\frac{1}{\Gamma(2.5 - \omega_{ij})} \right]^{1/2}
\end{equation}
with $\mu_{ij} = \frac{m\UP[i] m\UP[j]}{m\UP[i] + m\UP[j]}$, $\Gamma$ the Gamma function, $\omega_{ij}$ the viscosity index, and $d_{{\rm ref}, ij}, T_{{\rm ref}, ij}$ are the reference diameter and temperature. Particularly, we could obtain the variable hard sphere model (VHS) when $\alpha_{ij} = 1$ and $0.5 \leqslant \omega_{ij} \leqslant 1$ (especially, the Maxwell molecules and hard sphere model (HS) are corresponding to $\omega = 1$ and $\omega =0.5 $ respectively); and the VSS model when $1 < \alpha_{ij} \leqslant 2$ and $0.5 \leqslant \omega_{ij} \leqslant 1$. It is full of difficulty in numerically dealing with the binary collision term, especially for the VSS model, due to its extremely complicated form of integral.

For now, several numerical methods have been proposed to solve the multi-species Boltzmann equation. By stochastically modelling the binary interactions, the Direct Simulation Monte Carlo (DSMC) method \cite{Bird} is widely used in the simulation of the Boltzmann equation. DSMC method is efficient in the highly-rarefied gas flows but does not work well in the unsteady-state problems due to its stochastical noise. In recent years, research on the deterministic methods has made great progress. Discrete velocity method (DVM) \cite{goldstein1989, Liu2020} is a classical deterministic method. However, low efficiency of DVM has greatly limited its usage \cite{Panferov2002}. Spectral methods such as Fourier spectral method \cite{Pareschi1996, Mouhot, Hu2016} have made great success in the simulation of the Boltzmann equation, and then, it has been extended to the multi-species Boltzmann equations \cite{Shashank2019, Wulei2015}. Another important spectral method is the Hermite spectral method \cite{Gamba2018, Approximation2019}, where the Hermite polynomials are utilized as the basis functions to approximate the distribution function. An asymptotic-preserving scheme for the two-species binary collisional kinetic system was proposed in \cite{Jin2019}.  A comprehensive review of these methods can be referred to \cite{Dimarco2014}. Another important method is the moment method, which was firstly proposed by Grad \cite{Grad} in 1949. However, the non-hyperbolicity (even near the Maxwellian) of the moment equation led by Grad's method, has blocked the its development. Recently, several regularization methods \cite{Fan2013, Koellermeier2017} have been proposed to derive the globally hyperbolic moment systems.

In this paper, we are focusing on the Hermite spectral method. Since the Maxwellian is the steady-state solution of the Boltzmann equation, it is natural to consider expanding the distribution function with Hermite polynomials, whose weight function is just the Maxwellian. Another advantage of the Hermite spectral method is that several macroscopic variables such as the density, velocity, temperature, shear stress, and heat flux could be expressed by the expansion coefficients of the first few orders under the framework of the Hermite expansions. Recently, an algorithm to reduce the computational cost of approximating the binary collision term is proposed in \cite{Approximation2019}, which is achieved by exactly calculating the expansion coefficients of the quadratic collision model and building new collision models through combining the quadratic collision term and the simplfied collision models such as the BGK model. This method is verified to make great success in the simulation of rarefied gas \cite{ZhichengHu2019}, and is soon extended to the field of the collisional plasma \cite{FPL2018, FPL2020}.

Following similar lines, we develop the Hermite spectral method for the multi-species Boltzmann equation. Distribution functions of each species are expanded with the Hermite basis functions, with the expansion center in the basis function chosen differently for different species. We first simplify the complicated quadratic collision models, where the expansion centers are decided by the macroscopic velocity, the average temperature, and the mass of each species. Following the techniques in \cite{Approximation2019, FPL2018}, we will derive the algorithm to calculate the expansion coefficients of the binary collision models under the framework of the Hermite expansions. The different expansion centers for different species will make this calculation much easier. Moreover, the choice of the expansion centers makes the expansion coefficients of the collision models only depend on the mass ratio of the two particles, which also could be pre-computed and only need to be computed once. 
To reduce the computational cost of this complicated binary collision model, new collision models are built by combining the quadratic collision models and the simple multi-species BGK model \cite{Klingenberg2017}, which enables us to obtain a high-order approximation with low computational cost and this advantage is also verified in the numerical experiments. The time-splitting scheme is utilized to deal with the convection term and the collision term separately, where the convection term is solved similarly as in \cite{ZhichengHu2019, framework} with the standard finite volume method. With the new collision model in the collision step, the total computational cost to approximate one collision term  is greatly reduced, which is about $\mathcal{O}(M_0^9+M^3)$, where $M_0$ and $M$ are the length of the binary collision part and the order of Hermite expansion, respectively. 

In the numerical experiments, the test for the exact Krook-Wu solution \cite{krook1977exact} validates the correctness of this new numerical method. The simulations are implemented for one-dimensional benchmark problems, including Couette flow and Fourier heat transfer, and the two-dimensional lid-driven cavity flow. To further verify the accuracy and efficiency of this Hermite spectral method, we finally implement a 100-species Krook-Wu solution with tolerable costs of time and memory space, where the number of species can be hardly handled by most of the methods nowadays.

The rest of this paper is organized as follows. In Sec. \ref{sec:Prelimaries}, the multi-species Boltzmann equation and the Hermite spectral method are introduced. The numerical method to calculate the expansion coefficients of the quadratic collision term in the framework of the Hermite spectral method is shown in Sec. \ref{sec:collision}, and the algorithm to build the new collision model is also presented in this section. The discussion of the moment equations and the description of the whole numerical scheme is given in Sec. \ref{sec:numerical}. The numerical experiments are presented in Sec. \ref{sec:experiment}, with some concluding remarks in Sec. \ref{sec:conclusion}. Some supplementary contents are presented in App. \ref{sec:app}.

\section{Preliminaries}

\label{sec:Prelimaries}
For easier computation, we follow the similar nodimensionalization as in \cite{Shashank2019} to scale the variables  as 
\begin{equation}
    \label{eq:nondim}
    \hat{\bx} = \frac{\bx}{x_0},  \qquad \hat{\bv} = \frac{\bv}{u_0}, \qquad     \hat{t}=\frac{t}{x_0/u_0}, \qquad \hat{m}\UP[i] = \frac{m\UP[i]}{m_0}, \qquad \hat{f}\UP[i] = \frac{f\UP[i]}{n_0 / u_0^3},
\end{equation}
where $x_0, n_0, T_0$ and $m_0$ are the characteristic length, number density, temperature and mass, with $u_0$ the character velocity defined as $u_0 = \sqrt{k_B T_0 / m_0}$. 
\subsection{Multi-species Boltzmann equation}
Substituting the nondimensionalization into the Boltzmann equation \eqref{eq:Boltz}, we derive the dimensionless Boltzmann equation as 
(dropping $\hat{}$ for simplicity)
\begin{equation}
    \label{eq:nondim_Bol}
        \pd{f\UP[i]}{t}+\bv\cdot\nabla_{\bx} f\UP[i]=\sum_{j=1}^s
        \frac{1}{\Kn_{ij}}
        \mQ\UP[ij][f\UP[i],f\UP[j]],
  \end{equation}
where the collision term $\mQ\UP[ij][f\UP[i],f\UP[j]]$ is simplified as 
  \begin{equation}
      \label{eq:collision}
      \mQ\UP[ij]=\int_{\mR^3}\int_{S^2}B_{ij}(|\bv-\bv_{\ast}|,\varsigma)
        \left[f\UP[i](\bv')f\UP[j](\bv'_{\ast})-f\UP[i](\bv)
        f\UP[j](\bv_{\ast})\right]\rd\varsigma \rd \bv_{\ast}.
  \end{equation}
 and $\Kn_{ij}$ is the Knudsen number, which is defined as the ratio of the mean free path and characteristic length. 
 For the VSS model, the dimensionless collision kernel $B_{ij}$ has the form 
  \begin{equation}
    \label{eq:colKer}
    B_{ij}=\frac{2^{\om_{ij}-1/2}\alpha_{ij}}
        {\sqrt{1+\frac{m\UP[i]}{m\UP[j]}}
        \mu_{ij}^{\om_{ij}-1/2}
        \Gamma(\frac52-\omega_{ij})2^{\al_{ij}+1}\pi}|\bv-\bv_{\ast}|^
        {2(1-\om_{ij})}(1+\cos\chi)^{\al_{ij}-1}.
  \end{equation}
 From the nondimensionalization, the Knudsen number can be computed as
 \begin{equation}
     \label{eq:Kn}
     \Kn_{ij} = \frac{1}{\sqrt{1 + m\UP[i] / m\UP[j]} \pi n_0 d^2_{{\rm ref}, ij} (T_{{\rm ref}, ij}/ T_0)^{\omega_{ij} - 0.5} x_0}.
 \end{equation}
 The macroscopic variables such as the number density $n\UP[i]$, density $\rho\UP[i]$, macroscopic velocity $\bu\UP[i]$ and temperature $T\UP[i]$ for each species could be derived from the distribution function as 
 \begin{equation}
     \label{eq:macro}
     \begin{aligned}
     &n\UP[i](t,\bx)=\int_{\mR^3}f\UP[i](t,\bx,\bv)\rd\bv, \qquad 
    \rho\UP[i](t,\bx) = m\UP[i]\int_{\mR^3}f\UP[i](t,\bx,\bv)\rd\bv, \\
    &\bu\UP[i](t,\bx)=\frac{1}{n\UP[i]}\int_{\mR^3}\bv f\UP[i](t,\bx,\bv)\rd\bv, \qquad 
    T\UP[i](t,\bx)=\frac{m\UP[i]}{3n\UP[i]}\int_{\mR^3}|\bv-\bu\UP[i]|^2f\UP[i](t,\bx,\bv)
    \rd\bv. 
     \end{aligned}
 \end{equation}
 The stress tensor $\bsigma\UP[i]$, heat flux $\bq\UP[i]$ and total energy $E\UP[i]$ could also be derived from the distribution function as
 \begin{equation}
     \label{eq:sigma_q}
     \begin{aligned}
   &\bsigma\UP[i](t,\bx)=m\UP[i]\int_{\mR^3}\left[(\bv-\bu\UP[i])\otimes(\bv-\bu\UP[i])-\frac13|\bv-\bu\UP[i]|^2I\right] f\UP[i](t,\bx,\bv)\rd\bv,  \\
&\bq\UP[i](t,\bx) = \frac12 m\UP[i]\int_{\mR^3}(\bv-\bu\UP[i])|\bv-\bu\UP[i]|^2 f\UP[i](t,\bx,\bv)\rd\bv, \\
&E\UP[i]=\frac12 m\UP[i]\int_{\mR^3} f\UP[i]\bv^2\rd\bv=\frac12\rho\UP[i]|\bu\UP[i]|^2+\frac32n\UP[i]T\UP[i].
 \end{aligned}
\end{equation}
The macroscopic variables for the whole system are defined as 
\begin{equation}
    \label{eq:macro_tot}
    \begin{split}
        &n = \sum_{i = 1}^s n\UP[i], \qquad \rho = \sum_{i=1}^s \rho\UP[i], \qquad  \rho\bu = \sum_{i = 1}^s \rho\UP[i] \bu\UP[i],
        \qquad n T=\sum_{i = 1}^s  n\UP[i]T\UP[i], \\
        &\bsigma = \sum_{i = 1}^s \bsigma\UP[i], \qquad \bq = \sum_{i = 1}^s\bq \UP[i], \qquad E=\sum_{i=1}^s E\UP[i].
    \end{split}
\end{equation}
Define $\theta\UP[i] = T\UP[i] / m\UP[i]$, then the steady-state solution to the multi-species Boltzmann equation \eqref{eq:nondim_Bol} has the form as \cite{Shashank2019}
\begin{equation}
    \label{eq:local_max}
        \bu\UP[i]=\bu,\qquad T\UP[i]=T, \qquad f\UP[i]=n\UP[i]\mM_{\bu\UP[i],\theta\UP[i]}(\bv), \qquad i=1,2,\cdots,s,
\end{equation}
with 
\begin{equation}
    \label{eq:maxwellian}
   \mM_{\bu, T}(\bv) =  \frac{1}{(2\pi T)^{\frac32}}
    \exp\left(-\frac{|\bv-\bu|^2}{2 T}\right),
\end{equation}
which is named as the local Maxwellian. Here, we define the total average temperature $\mT$ as 
\begin{equation}
    \label{eq:barT}
    \frac{3}{2} n \mT + \frac{1}{2}\rho |\bu|^2 = E =  \sum_{i=1}^s \frac12 m\UP[i] \int_{\bbR^3} |\bv|^2 f\UP[i] \dd \bv,
\end{equation}
which could express the total average temperature of the system. We also want to emphasize that due to the nonlinearity of the total energy respected to the macroscopic velocity $\bu$, $\mT$ is different from $T$ in \eqref{eq:macro_tot}. Based on the conservation of mass, momentum, and energy during the collision, the properties hold for the collision term as 
\begin{equation}
    \label{eq:conserv_Q}
    \begin{split}
        &\int_{\mR^3}\mQ\UP[ij](\bv)\rd\bv=0, \\
        &\int_{\mR^3}\mQ\UP[ij](\bv)m\UP[i]\bv\rd\bv+
       \int_{\mR^3}\mQ\UP[ji](\bv_{\ast})m\UP[j]\bv_{\ast}\rd\bv_{\ast} =0, \\
        &\int_{\mR^3}\mQ\UP[ij](\bv)m\UP[i]|\bv|^2\rd\bv+
       \int_{\mR^3}\mQ\UP[ji](\bv_{\ast})m\UP[j]|\bv_{\ast}|^2\rd\bv_{\ast} =0.
    \end{split}
\end{equation}

Due to the complex form of the quadratic collision term, several simplified collision models are introduced, such as the BGK collision model \cite{Klingenberg2017}
\begin{equation}
    \label{eq:BGK}
     \mQ\UP[ij][f\UP[i],f\UP[j]](\bv)=\nu\UP[ij]n\UP[j]\big[f\UP[i]-
     n\UP[i]\mM_{\bu\UP[ij],\frac{T\UP[ij]}{m\UP[i]}}(\bv)\big],
\end{equation}
where $\nu\UP[{ij}]$ is the collision frequency, $\bu\UP[{ij}]$ and $T\UP[{ij}]$ are decided by the macroscopic variables of species $i$ and $j$, 
which should satisfy the conversation relationship \eqref{eq:conserv_Q}. Here, we want to mention if it holds that 
\begin{equation}
    \label{eq:BGK_u}
    \bu\UP[i] = \bu\UP[j] = \bu, \qquad T\UP[i] = T\UP[j]= T,
\end{equation}
then $\bu\UP[ij] = \bu\UP[ji] = \bu$, and $T\UP[ij] = T\UP[ji] = T$. 
Once $\bu\UP[ij]$ is decided, $\bu\UP[ji]$ could be calculated with the conservation law \eqref{eq:conserv_Q}, so as $T\UP[ij]$ and $T\UP[ji]$. 
There are several types of BGK models to decide $\bu\UP[ij], \bu\UP[ji]$ and $T\UP[ij], T\UP[ji]$, such as the work in  \cite{Klingenberg2017, Haack2021}, which we will not discuss in detail.


For now, we have introduced the properties of the multi-species Boltzmann equation. Due to its high dimensionality and the complex collision terms, several numerical methods have been proposed, such as the stochastic DSMC method \cite{Bird}, the discrete velocity method \cite{goldstein1989, Panferov2002}, and the Fourier spectral method \cite{Shashank2019,Wulei2015}. So far, the Hermite spectral method has been successfully utilized to solve Boltzmann equation with single species and the plasma with collisions \cite{FPL2020, ZhichengHu2019}. In this work, we will extend the methodology therein to solve the multi-species Boltzmann equation, which we will discuss in detail in the next sections.

\subsection{Hermite spectral method}
In the framework of the Hermite spectral method, the distribution function $f\UP[i]$ is approximated by a series of expansions. Following the method in \cite{ZhichengHu2019}, we choose the weighted Hermite polynomials as the basis function as 

\begin{definition}[Hermite Polynomials]
\label{def:Her}
For $\al=(\al_1,\al_2,\al_3)$, the three-dimensional 
Hermite polynomial $H_{\al}^{\ou,\oT}(\bv)$ is defined as
\begin{equation}
    \label{eq:Hermite}
    H_{\al}^{\ou,\oT}(\bv)=\frac{(-1)^{|\al|}\oT^{\frac{|\al|}{2}}}
    {\mM_{\ou,\oT}(\bv)}
    \dfrac{\partial^{|\al|}}{\partial \bv^{\alpha}}
    \mM_{\ou,\oT}(\bv), 
\end{equation}
with $|\al|=\al_1+\al_2+\al_3$ and $\partial \bv^{\alpha} = \pa v_1^{\al_1}\pa v_2^{\al_2}\pa v_3^{\al_3}$. Here $\ou \in \bbR^3$ and $\oT \in \bbR^+$ are two parameters which are chosen problem-dependently. $\mM_{\ou, \oT}(\bv)$ is the Maxwellian defined in  \eqref{eq:maxwellian}. The Hermite polynomials have several useful properties when approximating the complex collision term, which are listed in Appendix \ref{app:Her}.
\end{definition}

With the Hermite polynomials, the distribution function $f\UP[i]$ is expanded as 
\begin{equation}
    \label{eq:Her-expan}
    f\UP[i](t,\bx,\bv)=\sum_{\al\in\mN^3}f_{\al}\UP[i](t,\bx)
		\mH\aut(\bv),
\end{equation}
where $\mH\aut(\bv)=H\aut(\bv)\mMut(\bv)$ are the basis functions, $[\ou, \oT] \in \bbR^3\times\bbR^+$ is the problem-dependent expansion center. 
One of the advantages of this expansion is that the problem-dependent expansion center could also be interpreted as the "moments" of the distribution function. With a properly chosen expansion center, the computational cost may be greatly reduced. More details will be revealed in the successive sections. 
By the orthogonality of the basis function \eqref{eq:orth}, it holds that \begin{equation}
    \label{eq:falpha}
    f_{\al}\UP[i](t,\bx)=\frac{1}{\al!}\int_{\mR^3}f\UP[i](t,\bx,\bv) H\aut(\bv)
    \rd\bv.
\end{equation}
The macroscopic variables \eqref{eq:macro} could also be expressed using $f_{\al}\UP[i]$ as  
\begin{equation}
    \label{eq:macro_f}
    \begin{split}
    &n\UP[i] = f_{0}\UP[i],\quad u\UP[i]_k= \overline{u}_k+\frac{\sqrt{\oT}}{n\UP[i]}f\UP[i]_{e_k}, 
    \quad T\UP[i]=\frac{2m\UP[i]\oT}{3n\UP[i]}\sum_{k=1}^3f\UP[i]_{2e_k}+m\UP[i]\oT, \\
    &\sigma\UP[i]_{kl}=(1+\delta_{kl})m\UP[i]\oT f\UP[i]_{e_i+e_j}+\delta_{kl}\rho\UP[i]\left(\oT - \frac{T\UP[i]}{m\UP[i]}\right)-
    \rho\UP[i]\left(\overline{u}_k-u\UP[i]_k\right)\left(\overline{u}_l-u\UP[i]_l\right), \\
    &q\UP[i]_k=2m\UP[i]\oT^{\frac32}f\UP[i]_{3e_k}+(\overline{u}_k-u\UP[i]_k)\oT f\UP[i]_{2e_k}+|\ou-\bu\UP[i]|^2\sqrt{\oT} f\UP[i]_{e_k} \\
    &\qquad \qquad +\sum_{l=1}^3\left[\oT^{\frac32}f\UP[i]_{2e_l+e_k}+\left(\overline{u}_l-u\UP[i]_l\right)\oT f\UP[i]_{e_l+e_k}+
    \left(\overline{u}_k-u\UP[i]_k\right)\oT f\UP[i]_{2e_l}\right].
    \end{split}
\end{equation}

Sometimes, the expansion center is chosen according to the numerical scheme, such as dealing with the force term when simulating the plasma with collision \cite{FPL2020}, or according to the average temperature in the whole space for the Fourier flow problem \cite{ZhichengHu2019}. In this work, this expansion center is also chosen differently for the convection part and the collision term.

\section{Modelling the quadratic collision term with Hermite spectral method}
\label{sec:collision}
In this section, we will model the quadratic collision term using the Hermite spectral method. The influence of the convection term will be ignored temporarily, and we only discuss the algorithm to approximate the collision term $\mQ\UP[ij]$ in \eqref{eq:nondim_Bol}. 

\subsection{Hermite expansion of the collision term}
\label{sec:Her_col}
For the homogeneous Boltzmann equation, the total momentum and the kinetic energy are conserved. With a proper frame of reference, we can suppose that the total macroscopic velocity $\bu$ in \eqref{eq:macro_tot} is $\bu = \bz$. With \eqref{eq:barT}, the total average temperature $\mT$ is constant and solely decided by the total energy $E$. Therefore, we choose the macroscopic velocity and total average temperature to decide the expansion center. For the $i-$th species, the expansion center is set as $[\ou, \oT]\UP[i] = [\bu, \zeta\UP[i]]$ with $\bu = \bz$ and $\zeta\UP[i] = \mT / m\UP[i]$ to approximate the collision term. 

Thus, the basis function of $f\UP[i]$ is chosen as $\mHai=\mH_{\al}^{\bz,\ze\UP[i]}$, and the variable $\bx$ is also omitted in the distribution function for simplicity. The Hermite expansion \eqref{eq:Her-expan} is then written as
\begin{equation}
    \label{eq:Her-expan-coll}
    f\UP[i](t,\bv)=\sum_{\al\in\mN^3}f_{\al}\UP[i](t)\mHai(\bv).
\end{equation}
In what follows, we denote $\Hai$ and $\mM\UP[i]$ as the Hermite polynomial and Maxwellian corresponding to $\mHai$. Substituting \eqref{eq:Her-expan-coll} in the the collision term \eqref{eq:collision}, the collision $\mQ\UP[ij]$ could be expanded as 
\begin{equation}
    \label{eq:exp_col}
    \mQ\UP[ij][f\UP[i],f\UP[j]](\bv) = \sum_{\alpha \in \mN^3} Q\UP[ij]_{\alpha}(\bz, \zeta\UP[i]) \mHai(\bv). \end{equation}
Now we consider how to obtain $Q\UP[ij]_{\al}(\bz, \zeta\UP[i])$ for fixed $i$ and $j$. From the orthogonality \eqref{eq:orth}, we can obtain that 
\begin{equation}
    \label{eq:Qij}
		Q\UP[ij]_{\al}(\bz, \zeta\UP[i])=\frac{1}{\al!}\int_{\mR^3}\Hai(\bv)\mQ\UP[ij]
		[f\UP[i],f\UP[j]](\bv)\rd\bv
		=\sum_{\lbd\in\mN^3}\sum_{\kp\in\mN^3}\Aalk(\bz, \zeta\UP[i]) f_{\la}
		\UP[i]f_{\ka}\UP[j],
\end{equation}
with $\al!=\al_1!\al_2!\al_3!$. Here, the second equality can be obtained by inserting \eqref{eq:Her-expan-coll} into \eqref{eq:collision}, and
\begin{equation}
    \label{eq:Aalk}
	\Aalk(\bz, \zeta\UP[i]) = \frac{1}{\al!}\int_{\mR^3}\int_{\mR^3}\int_{S^2}B_{ij}
	(|\bv-\bv_{\ast}|,\varsigma)
	\Bigg[\mHli(\bv')\mHkj(\bv'_{\ast}) 
	-\mHli(\bv)\mHkj(\bv_{\ast})\Bigg]\Hai(\bv)\rd\varsigma \rd\bv \rd\bv_{\ast},
\end{equation}
where $(\bv',\bv'_{\ast})$ is the post-collision velocity pair determined by \eqref{eq:post-velo}. Here, $Q\UP[ij]_{\al}(\bz, \zeta\UP[i])$ and $\Aalk(\bz, \zeta\UP[i])$ are functions related to the expansion center $[\bz, \zeta\UP[i]]$. For simplicity, from now on, the coefficients $Q\UP[ij]_{\al}(\bz, \zeta\UP[i])$ and $\Aalk(\bz, \zeta\UP[i])$ will be shortened as $Q\UP[ij]_{\al}$ and $\Aalk$ if the expansion center is set as $[\bz, \zeta\UP[i]]$; and this parameter will be explicitly written out if expansion center other than $[\bz, \zeta\UP[i]]$ is utilized. 

In the computation of the expansion coefficients $\Aalk$, it has a complicated high-dimensional integral. Moreover, for the multi-species particles, this $\Aalk$ should be calculated for each pair of $(ij)$, which has greatly increased the computational cost compared to the single species case. Here, the properties of the Hermite polynomials and the conservation of the collision model are all utilized to compute $\Aalk$.

Denoting $r\UP[ij]=\frac{m\UP[j]}{m\UP[i]}=\frac{\zeta\UP[i]}{\zeta\UP[j]}$ as the mass ratio, with the transitivity of Hermite polynomials \eqref{eq:tran} and the definition of Maxwellian \eqref{eq:maxwellian}, it holds that 
\begin{equation}
    \label{eq:tran2}
    \Haj(\bv)=\Hai(\sqrt{r}\bv),\qquad \mM\UP[j](\bv)={r^{\frac32}}
    \mM\UP[i](\sqrt{r}\bv),
\end{equation}
where we omit the superscript of $r\UP[ij]$ and short it as $r$ for simplicity. 
Besides, from the conversation of energy, it also holds that
\begin{equation}
    \label{eq:con_energy}
|\bv|^2+r|\bv_{\ast}|^2=|\bv'|^2+r|\bv'_{\ast}|^2.
\end{equation}
Based on \eqref{eq:maxwellian}, \eqref{eq:tran2} and \eqref{eq:con_energy}, we can derive that 
\begin{equation}
    \label{eq:Max}
    \mM\UP[i](\bv')\mM\UP[j](\bv'_{\ast})
    =\mM\UP[i](\bv)\mM\UP[j](\bv_{\ast}).
\end{equation}
Then, \eqref{eq:Aalk} can be simplified as
\begin{equation}
    \label{eq:Aalk2}
    \begin{split}
        \Aalk=&\frac{r^{\frac32}}{\al!}\int_{\mR^3}\int_{\mR^3}\int_{S^2}
		B_{ij}(|\bv-\bv_{\ast}|,\varsigma)
		\Bigg[\Hli(\bv')\Hki(\sr \bv'_{\ast}) \\
		&\qquad -\Hli(\bv)\Hki(\sr \bv_{\ast})\Bigg]\Hai(\bv)\mM\UP[i](\bv)
		\mM\UP[i](\sr \bv_{\ast})\rd\varsigma \rd\bv \rd\bv_{\ast}.
    \end{split}
\end{equation}
With the properties of the basis functions, the calculation of $\Aalk$ could be greatly simplified, and the final result is listed in the theorem below. 
\begin{theorem} \label{theorem:step1}
The coefficients $\Aalk$ could be simplified as the form below:
\begin{equation}
    \label{eq:theorem_A}
    \Aalk=\frac{r^{\frac32}}{(1+r)^{\frac{|\al|+3}{2}}}
    \sum_{\la'\pq\al,\la'\pq\ka+\la}\frac{r^{\frac{|\bj|}{2}}}{\bj!}
    \ca1(r)\ca2(r)\ca3(r)\ga_{\ka'}^{\bj}(r,\zeta\UP[i]),
\end{equation}
where $\ka'=\ka+\la-\la'$ and $\bj=\al-\la'$, with $\la=(l_1,l_2,l_3),\ka=(k_1,k_2,k_3)$. Besides, the symbol `$\pq$' means
$$(i_1,i_2,i_3) \pq (j_1,j_2,j_3) \Leftrightarrow 
i_s\leqslant j_s \quad\text{for}\quad s=1,2,3.$$
The coefficients $\ca{d}(r)$ are given by
\begin{equation}
    \label{eq:ca}
    \ca{d}(r)=(1+r)^{-\frac{l'_d+k'_d}{2}}
    \sum_{s\in \mZ}C_{l_d}^sC_{k_d}^{l'_d-s}
    (-1)^{k_d-l'_d+s}r^{\frac{l_d+l'_d}{2}-s},
\end{equation}
with $C_n^k$ the generalized combination number which is regarded as zero when $k>n$ or $k<0$. The coefficients $\ga_{\ka'}^{\bj}(r, \bT)$ are defined as
{\small 
\begin{equation}
    \label{def:gamma}
    \begin{split}
     & \ga_{\ka'}^{\bj}(r,\oT)= \\ & \int_{\mR^3}\int_{S^2}
    \left[\Hj^{\bz,\oT}\left({\sqrt{\frac{r}{1+r}}\bg'}\right)-\Hj^{\bz,\oT}
    \left({\sqrt{\frac{r}{1+r}}\bg}\right)\right] 
    \Hkp^{\bz,\oT}\left({\sqrt{\frac{r}{1+r}}\bg}\right)B_{ij}(|\bg|,\varsigma)\mM_{\bz,\oT}
    \left({\sqrt{\frac{r}{1+r}}\bg}\right) \rd\varsigma \rd\bg,
    \end{split}
\end{equation}
}
where $\bg= \bv-\bv_{\ast}$ is the relative velocity and $\bg'=|\bg|\varsigma$ is the post-collision relative velocity.
\end{theorem}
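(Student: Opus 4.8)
The plan is to pass to center-of-mass and relative velocity coordinates, in which the Gaussian weight and the collision map both factorize, then expand every Hermite factor into a center-of-mass part times a relative part, and finally integrate the center of mass out by orthogonality so that only a relative-velocity/sphere integral survives. Concretely, I would set $\boldsymbol{G} = \frac{\bv+r\bv_{\ast}}{1+r}$ and $\bg = \bv - \bv_{\ast}$, so that $\bv = \boldsymbol{G} + \frac{r}{1+r}\bg$ and $\bv_{\ast} = \boldsymbol{G} - \frac{1}{1+r}\bg$ with unit Jacobian $\rd\bv\,\rd\bv_{\ast} = \rd\boldsymbol{G}\,\rd\bg$; the post-collision pair keeps the same $\boldsymbol{G}$ and has relative velocity $\bg' = |\bg|\varsigma$. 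The identity $|\bv|^2 + r|\bv_{\ast}|^2 = (1+r)|\boldsymbol{G}|^2 + \frac{r}{1+r}|\bg|^2$ (which also underlies \eqref{eq:Max}) factorizes the weight $\mM\UP[i](\bv)\mM\UP[i](\sr\bv_{\ast})$ in \eqref{eq:Aalk2} into a center-of-mass Gaussian in $\sqrt{1+r}\,\boldsymbol{G}$ times a relative Gaussian in $\sqrt{\frac{r}{1+r}}\,\bg$, both at temperature $\zeta\UP[i]$, the rescaling $\boldsymbol{G}\mapsto\sqrt{1+r}\,\boldsymbol{G}$ contributing a factor $(1+r)^{-3/2}$.

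Next I would decompose the Hermite factors in these coordinates. With $\cos\phi = (1+r)^{-1/2}$ and $\sin\phi = (r/(1+r))^{1/2}$ one has $\bv = \cos\phi\,(\sqrt{1+r}\,\boldsymbol{G}) + \sin\phi\,(\sqrt{\frac{r}{1+r}}\,\bg)$ and $\sr\bv_{\ast} = \sin\phi\,(\sqrt{1+r}\,\boldsymbol{G}) - \cos\phi\,(\sqrt{\frac{r}{1+r}}\,\bg)$, so that $(\bv,\sr\bv_{\ast})$ is an orthogonal transformation of the scaled pair $(\boldsymbol{G},\bg)$. Quoting the addition theorem for Hermite polynomials (Appendix \ref{app:Her}), the single factor $\Hai(\bv)$ expands as $\sum_{\la'\pq\al}\binom{\al}{\la'}(\cos\phi)^{|\la'|}(\sin\phi)^{|\bj|}$ times a center-of-mass polynomial of index $\la'$ and a relative polynomial of index $\bj=\al-\la'$. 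By the product-linearization theorem, the product $\Hli(\bv)\Hki(\sr\bv_{\ast})$, and its post-collision analogue evaluated at $\bg'$, expand into sums of a center-of-mass polynomial times a relative polynomial whose indices add up componentwise to $\la+\ka$, with coefficients $\prod_d \ca{d}(r)$; degree preservation of the orthogonal map is exactly what forces $l'_d+k'_d=l_d+k_d$.

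Then I would integrate out $\boldsymbol{G}$. Orthogonality \eqref{eq:orth} of the center-of-mass polynomials forces the center-of-mass index of the product to equal that of the test factor, which identifies the common summation index $\la'$, sets the product's relative index to $\ka'=\ka+\la-\la'$, and imposes $\la'\pq\al$ together with $\la'\pq\ka+\la$. Assembling the constants is then bookkeeping: the trigonometric weights give $r^{|\bj|/2}(1+r)^{-|\al|/2}$, the combination $\frac{1}{\al!}\binom{\al}{\la'}\la'!$ (the $\la'!$ coming from the center-of-mass orthogonality) collapses to $\frac{1}{\bj!}$, and together with the prefactor $r^{3/2}$ from \eqref{eq:Aalk2} and the Jacobian $(1+r)^{-3/2}$ this reproduces $\frac{r^{3/2}}{(1+r)^{(|\al|+3)/2}}\frac{r^{|\bj|/2}}{\bj!}\prod_d\ca{d}(r)$. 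What remains is the relative-velocity and spherical integral of the gain-minus-loss difference of the two surviving relative Hermite polynomials, weighted by $B_{ij}$ and the relative Gaussian, which is exactly $\ga_{\ka'}^{\bj}(r,\zeta\UP[i])$, giving \eqref{eq:theorem_A}.

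The step I expect to be the main obstacle is the product-linearization, i.e. proving the closed form \eqref{eq:ca} for $\ca{d}(r)$. Expanding a product of two one-dimensional Hermite polynomials of a reflected argument back into the rotated basis is a genuine linearization rather than a single addition theorem, and the cleanest route is a generating-function computation: inserting the reflection into $\exp(tv-t^2/2)\exp(uw-u^2/2)$, re-expanding in the rotated generating variables, and matching coefficients of $t^{l_d}u^{k_d}$, from which the double binomial sum $\sum_s C_{l_d}^s C_{k_d}^{l'_d-s}(-1)^{k_d-l'_d+s}r^{(l_d+l'_d)/2-s}$ falls out. Keeping track of all powers of $r$ and $1+r$ and of the signs produced by the reflection, and correctly routing the gain-minus-loss difference onto the relative factors, is where errors are most likely to occur.
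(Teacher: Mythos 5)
Your proposal is, in substance, the paper's own proof: the same change of variables $\bh=\frac{\bv+r\bv_{\ast}}{1+r}$, $\bg=\bv-\bv_{\ast}$ with the same energy identity and unit Jacobian \eqref{eq: dd_con}; your ``product-linearization'' is exactly Lemma \ref{lemma:tran-Hermite} with coefficients \eqref{eq:ca} (the paper proves it in App.~\ref{app:proof} by the Leibniz rule \eqref{rela:deri}, integration by parts and orthogonality, rather than by your generating-function route, but both work); your ``addition theorem'' is Corollary \ref{corollary:tran-Hermite1}; and integrating out the center of mass by orthogonality is precisely \eqref{def:eta}, including the $(1+r)^{-3/2}$ and the collapse $\frac{1}{\al!}\binom{\al}{\la'}\la'!=\frac{1}{\bj!}$ that you describe.

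The one step you must make explicit --- and which you only flag as ``where errors are most likely'' --- is the routing of the gain-minus-loss difference. The paper first applies the pre-/post-collision involution $(\bv,\bv_{\ast})\mapsto(\bv',\bv'_{\ast})$ to pass from \eqref{eq:Aalk2} to \eqref{Aalk3}, so the difference sits on the test polynomial $\Hai(\bv')-\Hai(\bv)$ \emph{before} any expansion; the relative integral that survives is then literally \eqref{def:gamma}, with the differenced factor carrying the index $\bj=\al-\la'$. Your route instead expands the gain product at $(\bh,\bg')$ and the loss product at $(\bh,\bg)$ directly, so after the center-of-mass integration the difference lands on the $\ka'$-indexed polynomial, i.e.\ you obtain
\begin{equation*}
\int_{\mR^3}\int_{S^2}\left[\Hkp^{\bz,\oT}\left(\sqrt{\tfrac{r}{1+r}}\,\bg'\right)-\Hkp^{\bz,\oT}\left(\sqrt{\tfrac{r}{1+r}}\,\bg\right)\right]\Hj^{\bz,\oT}\left(\sqrt{\tfrac{r}{1+r}}\,\bg\right)B_{ij}(|\bg|,\varsigma)\,\mM_{\bz,\oT}\left(\sqrt{\tfrac{r}{1+r}}\,\bg\right)\rd\varsigma\,\rd\bg,
\end{equation*}
which is \emph{not} verbatim $\ga_{\ka'}^{\bj}(r,\oT)$ as defined in \eqref{def:gamma}. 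The two agree, but only via the measure-preserving swap $(\bg,\varsigma)\mapsto\left(|\bg|\varsigma,\ \bg/|\bg|\right)$: it preserves $|\bg|$, hence the Gaussian weight; it preserves $\rd\varsigma\,\rd\bg$; and it preserves $B_{ij}$ because the deviation angle $\chi$ in \eqref{eq:B} is symmetric under $\bg\leftrightarrow\bg'$. With that one extra symmetry argument inserted, your derivation closes and the bookkeeping of constants you describe reproduces \eqref{eq:theorem_A} exactly as in \eqref{Aalk4}--\eqref{def:eta}; without it, the claimed identification ``which is exactly $\ga_{\ka'}^{\bj}(r,\zeta\UP[i])$'' does not follow from what precedes it.
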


The proof of Theorem \ref{theorem:step1} could be finished with the following steps. First, we introduce the lemma below
\begin{lemma} 
\label{lemma:tran-Hermite}
For any $r>0$, define $\bv=\bh+\frac{r}{1+r}\bg$, $\bw=\sr\bh-\frac{\sr}{1+r}\bg$, then $\forall\; \bT > 0$
\begin{equation}
    \label{eq:tran-Hermite}
    \Hl^{\bz,\oT}(\bv)\Hk^{\bz,\oT}(\bw)=\sum_{\ka'+\la'=\ka+\la}\ca1(r)\ca2(r)\ca3(r)
    H^{\bz,\oT}_{\la'}(\sqrt{1+r}\bh)H^{\bz,\oT}_{\ka'}\left(\sqrt{\frac{r}{1+r}}\bg\right),
\end{equation}
where $\ka=(k_1,k_2,k_3), \la=(l_1,l_2,l_3)$,
$\ka'=(k'_1,k'_2,k'_3), \la'=(l'_1,l'_2,l'_3)$ and 
the coefficients $\ca{d}(r)$ are defined in \eqref{eq:ca}.
\end{lemma}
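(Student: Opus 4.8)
The plan is to exploit the tensor-product structure first and then run a generating-function argument. Since the Maxwellian $\mM_{\bz,\oT}$ factorises over the three velocity components and the Hermite polynomials of Definition~\ref{def:Her} are the corresponding tensor products, the claimed identity splits into three independent one-dimensional identities, one for each component $d=1,2,3$, each carrying the single constraint $l'_d+k'_d=l_d+k_d$; these constraints combine into $\ka'+\la'=\ka+\la$ and the three scalar coefficients multiply to give $\ca1(r)\ca2(r)\ca3(r)$. Hence I would reduce the whole statement to the scalar claim that, for one-dimensional Hermite polynomials $H^{0,\oT}_n$ and scalars $v=h+\tfrac{r}{1+r}g$, $w=\sr h-\tfrac{\sr}{1+r}g$,
\[
H^{0,\oT}_{l}(v)\,H^{0,\oT}_{k}(w)=\sum_{l'+k'=l+k}a_{l'k'}^{lk}(r)\,H^{0,\oT}_{l'}(\sqrt{1+r}\,h)\,H^{0,\oT}_{k'}\!\left(\sqrt{\tfrac{r}{1+r}}\,g\right),
\]
understood as a polynomial identity in $(h,g)$. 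A preliminary observation is that $\oT$ is inert: replacing $v,w,h,g$ by $v/\sqrt{\oT},\dots$ leaves the linear relations invariant and rescales the arguments $\sqrt{1+r}\,h$ and $\sqrt{r/(1+r)}\,g$ accordingly, so the coefficients are $\oT$-independent and I may set $\oT=1$.

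The engine is the exponential generating function $\sum_{n}H^{0,\oT}_n(x)\,s^n/n!=\exp\!\big(xs/\sqrt{\oT}-s^2/2\big)$. Multiplying the generating functions for $H_l(v)$ and $H_k(w)$ yields a bivariate series whose exponent (at $\oT=1$) is $vs+wt-(s^2+t^2)/2$. Substituting the definitions of $v$ and $w$ and grouping the $h$- and $g$-terms turns the linear part into $h(s+\sr t)+\tfrac{\sr}{1+r}g(\sr s-t)$. The key step is then to introduce the new generating variables
\[
s'=\frac{s+\sr t}{\sqrt{1+r}},\qquad t'=\frac{\sr s-t}{\sqrt{1+r}},
\]
which form an orthogonal change of variables: it recasts the linear part as $\sqrt{1+r}\,h\,s'+\sqrt{r/(1+r)}\,g\,t'$ — exactly the arguments appearing on the right-hand side — while preserving $s^2+t^2=s'^2+t'^2$, so the Gaussian factor $\exp\!\big(-(s^2+t^2)/2\big)$ is untouched. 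Consequently the product generating function coincides termwise with that of $\sum_{l',k'}H_{l'}(\sqrt{1+r}\,h)\,H_{k'}(\sqrt{r/(1+r)}\,g)\,s'^{l'}t'^{k'}/(l'!k'!)$, which is precisely the generating function of the right-hand side.

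It then remains to extract $a_{l'k'}^{lk}(r)$ by expanding $s'^{l'}t'^{k'}/(l'!k'!)$ in powers of $s,t$ through the binomial theorem applied to $(s+\sr t)^{l'}(\sr s-t)^{k'}$, and reading off the coefficient of $s^{l}t^{k}/(l!k!)$. Matching the powers of $s$ forces the degree constraint $l'+k'=l+k$, and a single binomial sum with the stated signs and half-integer powers of $r$ emerges. The main obstacle is exactly this last bookkeeping: the generating-function expansion naturally produces combination numbers carrying the primed indices (together with the factorial ratio $l!k!/(l'!k'!)$), and one must verify that, using the constraint $l'+k'=l+k$, these reorganise termwise into the unprimed form $C_{l_d}^{s}C_{k_d}^{l'_d-s}$ of \eqref{eq:ca}, with the signs $(-1)^{k_d-l'_d+s}$ and the powers $r^{(l_d+l'_d)/2-s}$ coming out correctly. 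Everything else — the tensorisation, the $\oT$-scaling, and the orthogonality computation $s'^2+t'^2=s^2+t^2$ — is routine.
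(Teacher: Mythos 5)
Your proposal is correct, but it takes a genuinely different route from the paper's. The paper proves the lemma by an orthogonality computation: it expands the left-hand side in the orthogonal system $H_{\la'}(\sqrt{1+r}\bh)\,H_{\ka'}\bigl(\sqrt{r/(1+r)}\,\bg\bigr)$ and evaluates the expansion coefficients as explicit integrals $I_{\la',\ka'}^{\la,\ka}(r)$, using the Rodrigues-type definition \eqref{eq:Hermite}, integration by parts, the Leibniz rule for the mixed partials under the change of variables, and the differentiation relation \eqref{eq:diff_Her}; the constraint $\la'+\ka'=\la+\ka$ and the coefficient \eqref{eq:ca} drop out of orthogonality directly in the unprimed form, with no rebalancing needed. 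You instead tensorise to one dimension and push the exponential generating function $\sum_n H_n^{0,1}(x)s^n/n!=\exp(xs-s^2/2)$ through the substitution $s'=(s+\sr t)/\sqrt{1+r}$, $t'=(\sr s-t)/\sqrt{1+r}$. It is worth noting that your identity $s'^2+t'^2=s^2+t^2$ is the dual, in the generating variables, of exactly the quadratic-form identity $|\bv|^2+|\bw|^2=(1+r)|\bh|^2+\frac{r}{1+r}|\bg|^2$ on which the paper's proof rests — the same orthogonal map, seen on the two sides of the kernel $e^{xs-s^2/2}$. Your route avoids all integration and makes the degree constraint transparent (homogeneity of the substitution in $(s,t)$), at the price of the final coefficient extraction; the paper's route stays entirely within the listed appendix properties of the Hermite polynomials. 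The bookkeeping step you flagged as the main obstacle does close termwise: extracting the coefficient of $s^l t^k$ from $(1+r)^{-(l'+k')/2}(s+\sr t)^{l'}(\sr s-t)^{k'}$ gives
\begin{equation*}
a_{l'k'}^{lk}(r)=\frac{l!\,k!}{l'!\,k'!}\,(1+r)^{-\frac{l'+k'}{2}}\sum_{a}C_{l'}^{a}\,C_{k'}^{l-a}\,(-1)^{k'-l+a}\,r^{\frac{l+l'}{2}-a},
\end{equation*}
and under the constraint $l+k=l'+k'$ one has $(-1)^{k'-l+a}=(-1)^{k-l'+a}$ together with the termwise identity
\begin{equation*}
\frac{l!\,k!}{l'!\,k'!}\,C_{l'}^{a}\,C_{k'}^{l-a}
=\frac{l!\,k!}{a!\,(l'-a)!\,(l-a)!\,(k'-l+a)!}
=C_{l}^{a}\,C_{k}^{l'-a},
\end{equation*}
since $k'-l+a=k-l'+a$; this is precisely \eqref{eq:ca} with summation index $s=a$, so no nontrivial reorganisation of the sum is actually required. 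Your preliminary reductions (tensorisation via the factorisation of the Maxwellian, and $\oT$-inertness via the transitivity property \eqref{eq:tran}) are also sound.
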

The proof of Lemma \ref{lemma:tran-Hermite} could be found in App. \ref{app:proof}. Based on Lemma \ref{lemma:tran-Hermite}, we could derive the corollary below 
\begin{corollary} 
\label{corollary:tran-Hermite1}
For any $r > 0$, define $\bv=\bh+\frac{r}{1+r}\bg$, then it holds that 
\begin{equation}
    \label{tran-Hermite1}
    \Hl^{\bz,\oT}(\bv)=(1+r)^{-\frac{|\la|}{2}}\sum_{\ka'+\la'=\la}
    \frac{\la!}{\ka'!\la'!}r^{\frac{|\la'|}{2}}
    H_{\ka'}^{\bz,\oT}(\sqrt{1+r}\bh)H_{\la'}^{\bz,\oT}\left(\sqrt{\frac{r}{1+r}}\bg\right).
\end{equation}
\end{corollary}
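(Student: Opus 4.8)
The plan is to obtain Corollary \ref{corollary:tran-Hermite1} as the direct specialization of Lemma \ref{lemma:tran-Hermite} to the zero multi-index $\ka=\bz$. First I would observe that, by the defining formula \eqref{eq:Hermite}, the zeroth Hermite polynomial is identically $H_{\bz}^{\bz,\oT}\equiv 1$, so that setting $\ka=\bz$ in \eqref{eq:tran-Hermite} collapses the left-hand product $\Hl^{\bz,\oT}(\bv)\Hk^{\bz,\oT}(\bw)$ to $\Hl^{\bz,\oT}(\bv)$ alone, and the constraint $\bw=\sr\bh-\frac{\sr}{1+r}\bg$ becomes irrelevant since $\bw$ no longer enters. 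On the right-hand side the summation constraint $\ka'+\la'=\ka+\la$ becomes $\ka'+\la'=\la$, which already matches the index set appearing in the corollary. It therefore remains only to evaluate the coefficients $\ca{d}(r)$ at $k_d=0$ and to assemble the componentwise products.

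Next I would simplify each $\ca{d}(r)$ from \eqref{eq:ca} under $k_d=0$. In the inner sum over $s\in\mZ$, the factor $C_{k_d}^{l'_d-s}=C_0^{l'_d-s}$ vanishes unless $s=l'_d$, so the sum reduces to a single surviving term; at that term the sign $(-1)^{k_d-l'_d+s}$ becomes $(-1)^0=1$. Invoking the componentwise constraint $k'_d=l_d-l'_d$ coming from $\ka'+\la'=\la$, this leaves a closed form for $\ca{d}(r)$ consisting of a power of $(1+r)$, the single binomial coefficient $C_{l_d}^{l'_d}$, and the factor $r^{(l_d-l'_d)/2}=r^{k'_d/2}$. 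This is a short computation with no subtlety beyond tracking the vanishing of the generalized combination numbers.

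Finally I would take the product $\ca1(r)\ca2(r)\ca3(r)$ over the three components and recognise the multi-index structure: the powers of $(1+r)$ combine to $(1+r)^{-|\la|/2}$ because $|\la'|+|\ka'|=|\la|$, the three binomial coefficients combine to the multinomial factor $\la!/(\la'!\ka'!)$, and the powers of $r$ combine to $r^{|\ka'|/2}$. The expression produced in this way agrees with the stated identity after interchanging the dummy labels $\la'\leftrightarrow\ka'$ in the summation; this swap is harmless because $\la!/(\la'!\ka'!)$ is symmetric in the two indices, and it merely reconciles the fact that in \eqref{eq:tran-Hermite} the polynomial evaluated at $\sqrt{1+r}\,\bh$ carries the index $\la'$ whereas in the corollary it carries $\ka'$. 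The only point demanding care is this relabeling together with the correct matching of which Hermite factor is evaluated at $\sqrt{1+r}\,\bh$ versus at $\sqrt{r/(1+r)}\,\bg$; there is no genuine analytic obstacle, which is precisely why the statement is recorded as a corollary rather than proved independently.
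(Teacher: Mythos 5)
Your proposal is correct and coincides with the paper's own route: the paper disposes of the corollary in one line, ``the proof is straightforward by letting $\ka=\bz$ in \eqref{eq:tran-Hermite},'' which is precisely your specialization of Lemma \ref{lemma:tran-Hermite}. Your explicit evaluation of $\ca{d}(r)$ at $k_d=0$ (only $s=l'_d$ survives, yielding $(1+r)^{-l_d/2}C_{l_d}^{l'_d}r^{k'_d/2}$, whence the multinomial factor $\la!/(\la'!\ka'!)$ and the prefactor $(1+r)^{-|\la|/2}$), together with the harmless dummy-label swap $\la'\leftrightarrow\ka'$ to match the corollary's indexing, simply fills in the details the paper leaves implicit.
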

The proof is straightforward by letting $\ka=\bz$ in \eqref{eq:tran-Hermite}. Based on Lemma \ref{lemma:tran-Hermite} and Corollary \ref{corollary:tran-Hermite1}, the proof of Theorem \ref{theorem:step1} is as below. 
\begin{proof}[Proof of Theorem \ref{theorem:step1}]
First, we denote $\bg=\bv-\bv_{\ast}$, $\bh=\frac{\bv+r\bv_{\ast}}{1+r}$ in \eqref{eq:Aalk2}, then it holds that
\begin{equation}
    \label{eq: dd_con}
    \rd \bg \rd \bh=\rd \bv \rd \bv_{\ast}, \qquad |\bv|^2+r|\bv_{\ast}|^2=\frac{r}{1+r}|\bg|^2+(1+r)|\bh|^2.
\end{equation}
Similarly, with the conservation of the collision term, if $\bw=\bv', \bw_{\ast}=\bv'_{\ast}$, $\varsigma'=\frac{\bg}{|\bg|}$, then it holds that
\begin{equation}
    \label{eq:dd_con1}
    \bw'=\bv, \bw'_{\ast}=\bv_{\ast}, \qquad \rd \bw\rd \bw_{\ast}=\rd \bv\rd \bv_{\ast}, \qquad |\bw|^2+r|\bw_{\ast}|^2 = |\bv|^2+r|\bv_{\ast}|^2.
\end{equation}
Thus, following the transformation method in \cite{Approximation2019}, we can derive that 
\begin{equation}
    \label{Aalk3}
    \begin{split}
    \Aalk=&\frac{r^{\frac32}}{\al!}\int_{\mR^3}\int_{\mR^3}\int_{S^2}
    B_{ij}(|\bv-\bv_{\ast}|,\varsigma)
    \Hl\UP[i](\bv)\Hk\UP[i](\sr \bv_{\ast})\\
    &\qquad \qquad \left[\Ha\UP[i](\bv')-\Ha\UP[i](\bv)\right] 
    \mM\UP[i](\bv)\mM\UP[i](\sr \bv_{\ast})\rd\varsigma \rd\bv \rd\bv_{\ast}.
    \end{split}
\end{equation}
Combining Lemma \ref{lemma:tran-Hermite} and Corollary 
\ref{corollary:tran-Hermite1}, we rewrite
the above equality as an integral of $\bg$ and $\bh$
\begin{equation}
    \label{Aalk4}
    \Aalk=\frac{r^{\frac32}}{(1+r)^{\frac{|\al|}{2}}}
    \sum_{\la'+\ka'=\la+\ka}\sum_{\bi+\bj=\al}
    \frac{r^{\frac{|\bj|}{2}}}{\bi!\bj!}\ca1(r)\ca2(r)\ca3(r)
    \ga_{\ka'}^{\bj}(r, \zeta\UP[i])\et_{\la'}^{\bi},
\end{equation}
where the coefficients $\ga_{\ka'}^{\bj}$ are integrals with 
respect to $\bg$ defined in
(\ref{def:gamma}), and $\et_{\la'}^{\bi}$ are integrals with 
respect to $\bh$ defined by
\begin{equation}
    \label{def:eta}
    \et_{\la'}^{\bi}=\int_{\mR^3}H\UP[i]_{\la'}(\sqrt{1+r}\bh)H\UP[i]_{\bi}
    (\sqrt{1+r}\bh)
    \mM\UP[i](\sqrt{1+r}\bh)\rd\bh=\frac{\la'!}{(1+r)^{\frac32}}\de_{\la',\bi}.
\end{equation}
The second equality can be obtained by orthogonality \eqref{eq:orth}.
 With \eqref{Aalk4} and \eqref{def:eta}, we complete the 
proof of Theorem \ref{theorem:step1}.
\end{proof}

For some special collision kernels, the calculation of $\Aalk$ could be further simplified. For example, for the VSS collision kernel \eqref{eq:vss}, where $B_{ij}(|\bg|, \varsigma)$ could be rewritten in the form 
\begin{equation}
    \label{eq:form_B}
    B_{ij}(|\bg|,\varsigma)=|\bg|^C\Sigma_{ij}(\varsigma), 
\end{equation}
where $C$ is some constant and $\Sigma_{ij}(\varsigma)$ is a function of $\varsigma$. Then for the VSS model, with the transitivity property \eqref{eq:tran}, the coefficient $ \ga_{\ka}^{\bj}(r,\oT)$ in \eqref{def:gamma} could be further simplified as 
\begin{equation}
    \label{eq:tran_gamma}
    \begin{split}
    \ga_{\ka}^{\bj}(r,\oT)=&\int_{\mR^3}\int_{S^2}
    \left[\Hj^{\bz,1}\left(\beta \bg'\right)-\Hj^{\bz,1}
    \left(\beta\bg\right)\right] \Hk^{\bz,1}\left({\beta\bg}\right)
    B_{ij}(|\bg|,\varsigma)
    \oT^{-\frac32}\mM_{\bz,1}\left(\beta \bg\right)\rd\varsigma \rd\bg \\
    =&\left(\frac{1+r}{2r}\right)^{\frac32}\left(\frac{(1+r)\oT}{2r}\right)^{\frac{C}{2}}
    \ga_{\ka}^{\bj}(1,1), \qquad {\rm with}~\beta = \sqrt{\frac{r}{(1+r)\oT}}.
    \end{split}
\end{equation}
Here, a further simplification of $\ga_{\ka}^{\bj}(1,1)$ is introduced in \cite{Approximation2019}, and we list the result in App. \ref{app:gamma}. For the VSS model, the ultimate form of $\Aalk$ only needs a two-dimensional integration, and the computational cost has been greatly reduced. 

\begin{remark}
    \label{rmk:otherexpans}
    From the calculation of $\Aalk$, we want to emphasize that the expansion center $[\bz, \zeta\UP[i]]$ is not unique. As long as the same expansion center $\ou$ and scaling factor $\mT$ is chosen for each species, (i.e. the expansion center is $[\ou, \mT/m\UP[i]]$ for species $i$), \eqref{eq:Aalk} could be computed in a similar way. 
    It is easy to verify that for the VSS model, it holds that 
    \begin{equation}
        \label{eq:VSS_A}
        \Aalk(\ou,\oT)=\oT^{\frac{C}{2}}\Aalk(\bz,1),
    \end{equation}
    and $\Aalk(\bz, 1)$ only depends on the mass ratio $r = \frac{m\UP[j]}{m\UP[i]}$. Therefore, in the real simulation, we only need to compute and store $\Aalk(\bz, 1)$ with each different mass ratio $r$ in the system. 
\end{remark}

For now, we have introduced the algorithm to calculate the expansion coefficients $\Aalk$ for the Boltzmann collision term. Especially for the VSS model, the final computation is reduced into some elementary operations and a two-dimensional integral. Moreover, the coefficients $\Aalk$ could be precomputed offline and saved for the simulation. The computational cost of the coefficients is listed in Tab. \ref{table:cost1}, where the computational cost for the related coefficients is also listed. 
\begin{table}[ht]
    \centering
    \def\arraystretch{1.5}
    {\footnotesize
    \begin{tabular}{lll}
    \hline 
    Coefficients & Formula & Computational cost \\
    \hline 
       $ D_{\bm}^{\ka}$ & \eqref{eq:def_D} & $\mO(M^6)$ \\
        $S_{\ka}^{\la}$ & \eqref{eq:recur_R} & $\mO(M^5)$ \\
        $K_{\bm\bn}^{\ka\bj}$ & \eqref{eq:def_K} & $\mO(M^4)$ \\
        $\ga_{\ka}^{\la}(r,\oT)$ & \eqref{eq:tran_gamma} and \eqref{eq:algo_gamma} & $\mO(M^{11})$ \\
        $\ca{d}$ & \eqref{eq:ca} & $\mO(M^4)$ \\
        $A\alk$ & \eqref{eq:Aalk2} & $\mO(M^{12})$ \\
    \hline
    \end{tabular}
    }
    \caption{The formula and computational cost to obtain  $A\alk$ and related coefficients \cite{Approximation2019}.}\label{table:cost1}
\end{table}

Even though the expansion coefficients $\Aalk$ could be precomputed, the memory required to store $\Aalk$ is extremely large. Thus, in the framework of the Hermite spectral method to solve the Boltzmann equation, the direct simulation of the collision term is still quite expensive. Following the thought in \cite{Approximation2019}, a new collision model will be built to approximate the quadratic collision term in the framework of Hermite expansion. 

\subsection{Approximation to the Boltzmann collision term}
Supposing that the truncation order of the expansion approximation in \eqref{eq:Her-expan-coll} is $M$, then the computational cost to compute the collision terms $\mQ\UP[ij]$ in \eqref{eq:Qij} is $\mO(s^2M^9)$, where $s$ is the number of species. Such computational cost is unacceptable for a large $M$. For the spatially non-homogeneous problems, the computational cost will be much larger. Therefore, we want to build new collision models following the method proposed in \cite{Approximation2019, ZhichengHu2019}. 

\subsubsection{The BGK collision term} 
Before introducing the new collision model, we will discuss the BGK model \eqref{eq:BGK} firstly. For the BGK model in the multi-species Boltzmann equation, other than that in the single-species case where the BGK model has only one form, there are several variants here. The essential part is how to choose $\bu\UP[ij]$, $T\UP[ij]$ and collision frequency $\nu\UP[ij]$ in \eqref{eq:BGK}, with different models discussed in related literature. It is assumed that $\bu\UP[ij]$ is a linear combination of $\bu\UP[i]$ and $\bu\UP[j]$ in \cite{Klingenberg2017}, and an estimation depending on the macroscopic variables is provided in \cite{Haack2021}. In this work, for simplicity, we assume that $\bu\UP[ij] = \bu\UP[ji]$ and $T\UP[ij] = T\UP[ji]$. Then,  as is discussed in \cite{Haack2017}, it holds for $\bu\UP[ij]$ and $T\UP[ij]$ 
that
\begin{equation}
     \label{eq:mean_uT}
    \begin{gathered}
        \bu\UP[ij]=\frac{\rho\UP[i]\nu\UP[ij]\bu\UP[i]+\rho\UP[j]\nu\UP[ji]\bu\UP[j]}
        {\rho\UP[i]\nu\UP[ij]+\rho\UP[j]\nu\UP[ji]}, \\ 
        T\UP[ij]=\frac{n\UP[i]\nu\UP[ij]T\UP[i]+n\UP[j]\nu\UP[ji]T\UP[j]}
        {n\UP[i]\nu\UP[ij]+n\UP[j]\nu\UP[ji]}+\frac{\rho\UP[i]\nu\UP[ij](|\bu\UP[i]|^2-|\bu\UP[ij]|^2) +  \rho\UP[j]\nu\UP[ji](|\bu\UP[j]|^2-|\bu\UP[ji]|^2)}{3(n\UP[i]\nu\UP[ij]+n\UP[j]\nu\UP[ji])}.
    \end{gathered}
\end{equation}
To approximate the BGK collision term under the framework of the Hermite expansion, we choose the same expansion center $[\bz, \zeta\UP[i]]$ as in \eqref{eq:Her-expan-coll}. Thus, the BGK collision term \eqref{eq:BGK} could be expanded as  
\begin{equation}
    \label{eq:expan_BGK}
    \mQ\UP[ij]_{\rm BGK}(\bv) = \sum_{\alpha \in \mN^3} Q\UP[ij]_{\alpha, \rm BGK} \mH_{\alpha}\UP[i](\bv),
\end{equation}
where 
\begin{equation}
    \label{eq:coe_BGK}
    Q\UP[ij]_{\alpha, \rm BGK} = \nu\UP[ij] n\UP[j]     \Bigg[\bigg(\Pi_{[\bz,\zeta\UP[i]]}\left(\mM_{\bu\UP[ij],\frac{T\UP[ij]}{m\UP[i]}}\right)\bigg)_{\al}- f\UP[i]_{\al}\Bigg]. 
\end{equation}
Here $\Pi_{[\bz,\zeta\UP[i]]}(f)$ is the operator defined as below.

\begin{definition}
\label{def:pro_oper}
First, define the function space, $\mF$ and $\bbH[\ou, \bT]$ as 
 \begin{equation}
        \begin{split}
        & \mF=\left\{f(\bv)\Big|
        \int_{\mR^3}(1+|\bv|^N)f(\bv)\rd \bv < 
        \infty,\;
        \forall N\in \mN
        \right\}, \\
        & \bbH[\ou,\bT]=\text{span}\left\{\mH\aut(\bv) \Big| 
        \alpha \in \mN^3 \right\} \subset \mF.
        \end{split}
    \end{equation}
    Assume that for $t > 0, \bx\in \Omega, f(t,\bx,\cdot)\in \mF$, then the projection operator $\Pi_{[\ou, \oT]}: \mF \to \bbH[\ou, \oT]$ is defined as 
    \begin{equation}
    \label{eq:def_proj}
    \Pi[\ou,\oT](f)=\sum_{\al \in \mN^3}\left[\left(\frac{1}{\al!}
    \int_{\mR^3}H\aut(\bv)f(\bv)\rd \bv\right) \mH\aut(\bv)\right].
\end{equation}
\end{definition}

In the simulation, the expansion coefficient for the the Maxwellian $\mM_{\bu\UP[ij],\frac{T\UP[ij]}{m\UP[i]}}$ in \eqref{eq:coe_BGK} is calculated using the Theorem \ref{thm:project} instead of the definition of the projection operator \eqref{eq:def_proj}, and readers can be referred to \cite[Theorem 3.1]{ZhichengHu2019} for the related proof and the implementation of this projection algorithm. 

\begin{theorem}
    \label{thm:project}
    Suppose $f(\bv) \in \mF$ is expanded with two 
    different expansion centers $[\ou\UP[1],\oT\UP[1]]$ and
    $[\ou\UP[2],\oT\UP[2]]$. From \eqref{eq:falpha}, we can compute these two sets of expansion coefficients as
    \begin{equation}
        \label{eq:twoexpan}
        \begin{split}
            & f^{[\ou\UP[1],\oT\UP[1]]}_{\al}=\frac{1}{\al!}\int H_{\alpha}^{\ou\UP[1],\oT\UP[1]}(\bv)f(\bv)\rd\bv \\
            & f^{[\ou\UP[2],\oT\UP[2]]}_{\al}=\frac{1}{\al!}\int H_{\alpha}^{\ou\UP[2],\oT\UP[2]}(\bv)f(\bv)\rd\bv \\
        \end{split}
    \end{equation}
    Then we have
    \begin{equation}
        \label{eq:project_expan}
        f^{[\ou\UP[2],\oT\UP[2]]}_{\al}=\Big(\oT\UP[2]\Big)^{-\frac{|\al|}2}\sum_{l=0}^{|\al|}\phi_{\alpha}\UP[l], 
    \end{equation}
    where $\phi_{\alpha}\UP[l]$ is defined recursively by
    \begin{equation}
        \label{eq:def_phi}
        \phi_{\alpha}\UP[l]=\left\{
        \begin{array}{ll}
            \Big(\oT\UP[1]\Big)^{\frac{|\alpha|}2}f^{[\ou\UP[1],\oT\UP[1]]}_{\al}, & l=0,  \\
            \frac{1}{l}\sum_{d=1}^3\left[\Big(\ou\UP[2]-\ou\UP[1]\Big)\phi_{\al-e_d}\UP[l-1]+\frac12 \Big(\oT\UP[2]-\oT\UP[1]\Big)\phi_{\al-2e_d}\UP[l-1]\right], & 1\leqslant l \leqslant |\al|. 
        \end{array}
        \right.
    \end{equation}
    In \eqref{eq:def_phi}, terms with any negative index are 
    regarded as $0$.
\end{theorem}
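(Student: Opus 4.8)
The plan is to prove Theorem~\ref{thm:project} by the method of generating functions, which converts the change of expansion center into multiplication by a single $\boldsymbol{v}$-independent Gaussian factor and makes the recursion \eqref{eq:def_phi} fall out of a power-series expansion. First I would record the generating function of the basis. From Definition~\ref{def:Her} and $\mH\aut(\bv)=H\aut(\bv)\mM_{\ou,\oT}(\bv)$ one has $\mH\aut(\bv)=(-1)^{|\al|}\oT^{|\al|/2}\frac{\partial^{|\al|}}{\partial\bv^{\al}}\mM_{\ou,\oT}(\bv)$, so summing $\boldsymbol{t}^{\al}/\al!$ against the basis and recognizing a Taylor expansion of the Gaussian gives
\begin{equation}
\sum_{\al\in\mN^3}\frac{\boldsymbol{t}^{\al}}{\al!}\mH\aut(\bv)=\mM_{\ou,\oT}(\bv-\sqrt{\oT}\,\boldsymbol{t}).
\end{equation}
Dividing by $\mM_{\ou,\oT}(\bv)$ and simplifying the quadratic exponent yields the closed form
\begin{equation}
\sum_{\al\in\mN^3}\frac{\boldsymbol{t}^{\al}}{\al!}H\aut(\bv)=\exp\left(\frac{(\bv-\ou)\cdot\boldsymbol{t}}{\sqrt{\oT}}-\frac{|\boldsymbol{t}|^2}{2}\right).
\end{equation}

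Next I would rescale $\boldsymbol{t}\mapsto\sqrt{\oT}\,\boldsymbol{t}$ and introduce the renormalized coefficients $\oT^{|\al|/2}f_{\al}^{[\ou,\oT]}$, which are exactly the quantities $\phi_{\al}\UP[0]$ at level $l=0$. Multiplying by $f(\bv)$ and integrating over $\bv$ (the interchange of summation and integration being legitimate since $f\in\mF$ has all polynomial moments finite) and using \eqref{eq:falpha} produces the coefficient generating function
\begin{equation}
G(\boldsymbol{t};\ou,\oT):=\sum_{\al\in\mN^3}\boldsymbol{t}^{\al}\,\oT^{|\al|/2}f_{\al}^{[\ou,\oT]}=\int_{\mR^3}\exp\left((\bv-\ou)\cdot\boldsymbol{t}-\tfrac{\oT}{2}|\boldsymbol{t}|^2\right)f(\bv)\,\rd\bv.
\end{equation}
The crucial observation is that the two integrands associated with the centers $[\ou\UP[1],\oT\UP[1]]$ and $[\ou\UP[2],\oT\UP[2]]$ differ only by the factor $\exp(L(\boldsymbol{t}))$ with $L(\boldsymbol{t})=\boldsymbol{a}\cdot\boldsymbol{t}+b\,|\boldsymbol{t}|^2$, where $\boldsymbol{a}\propto\ou\UP[2]-\ou\UP[1]$ and $b\propto\oT\UP[2]-\oT\UP[1]$; since this factor does not depend on $\bv$ it pulls out of the integral, giving
\begin{equation}
G(\boldsymbol{t};\ou\UP[2],\oT\UP[2])=e^{L(\boldsymbol{t})}\,G(\boldsymbol{t};\ou\UP[1],\oT\UP[1]).
\end{equation}

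Finally I would extract \eqref{eq:def_phi} by expanding $e^{L}=\sum_{l\geqslant0}L^{l}/l!$ and defining $\phi_{\al}\UP[l]$ as the coefficient of $\boldsymbol{t}^{\al}$ in $(L^{l}/l!)\,G(\boldsymbol{t};\ou\UP[1],\oT\UP[1])$. Comparing coefficients of $\boldsymbol{t}^{\al}$ then gives \eqref{eq:project_expan} with $\phi_{\al}\UP[0]=(\oT\UP[1])^{|\al|/2}f_{\al}^{[\ou\UP[1],\oT\UP[1]]}$, while the identity $L^{l}/l!=(L/l)\cdot L^{l-1}/(l-1)!$ together with the fact that the monomials of $L$ are $t_d$ (a shift of the multi-index by $e_d$) and $t_d^{2}$ (a shift by $2e_d$) reproduces the two-term recursion of \eqref{eq:def_phi}, the two increments entering through $\boldsymbol{a}$ and $b$. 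The lowest degree of $L^{l}$ in $\boldsymbol{t}$ being $l$ forces $\phi_{\al}\UP[l]=0$ for $l>|\al|$, so the sum terminates at $|\al|$ as stated.

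The part requiring the most care is the precise sign and normalization bookkeeping: the $\oT^{|\al|/2}$ factors and, above all, the exact proportionality constants (including their signs) in $\boldsymbol{a}$ and $b$. These are pinned down unambiguously by evaluating the identity at low order --- the case $\al=e_d$ with $\oT\UP[1]=\oT\UP[2]$ reduces to the transformation of the mean velocity and must agree with \eqref{eq:macro_f}, which fixes the $\ou$-term, and the case $\al=2e_d$ with $\ou\UP[1]=\ou\UP[2]$ fixes the $\oT$-term --- and one should verify that the constants obtained this way match those printed in \eqref{eq:def_phi}. The only analytic point is the convergence of the generating series and the term-by-term coefficient comparison, both guaranteed by $f\in\mF$; everything else is formal algebra.
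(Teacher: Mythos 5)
Your generating-function strategy is sound, and it is worth noting at the outset that the paper itself contains no proof of Theorem \ref{thm:project}: it defers entirely to \cite[Theorem 3.1]{ZhichengHu2019}. Your argument is in effect the multiplicative (Fourier/Laplace-side) twin of the derivative-side argument behind the cited result, which Taylor-expands $\mM_{\ou^{(1)},\oT^{(1)}}$ about $\mM_{\ou^{(2)},\oT^{(2)}}$ using $\pa\mM/\pa\oT=\frac12\Delta_{\bv}\mM$ and $\pa\mM/\pa \overline{u}_d=-\pa_{v_d}\mM$; your $e^{L(\boldsymbol{t})}$ is exactly that heat-kernel shift seen on the generating-function side. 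The skeleton — closed-form generating function, the relation $G(\boldsymbol{t};\ou^{(2)},\oT^{(2)})=e^{L(\boldsymbol{t})}G(\boldsymbol{t};\ou^{(1)},\oT^{(1)})$, coefficient extraction via $L^{l}/l!=(L/l)\,L^{l-1}/(l-1)!$, and the termination $\phi_{\al}^{(l)}=0$ for $l>|\al|$ because $L$ has no constant term — is all correct.

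However, the bookkeeping you postponed is not a formality, and you should not have left it deferred, because it is exactly where the printed statement goes wrong. Subtracting the exponents in your own formula for $G$ determines $L$ with no freedom:
\begin{equation*}
L(\boldsymbol{t})=\left(\ou^{(1)}-\ou^{(2)}\right)\cdot\boldsymbol{t}+\frac{\oT^{(1)}-\oT^{(2)}}{2}\,|\boldsymbol{t}|^2,
\end{equation*}
i.e.\ the increments enter as ``old minus new'' --- the \emph{negatives} of the quantities printed in \eqref{eq:def_phi}. Your own proposed low-order check confirms this. Take $f=\mM_{\ou,\oT}$ with $\ou\neq\bz$, old center $[\ou,\oT]$ (so $\phi^{(0)}_{\al}=\delta_{\al,\boldsymbol{0}}$) and new center $[\bz,\oT]$: direct integration, or \eqref{eq:macro_f}, gives $f^{[\bz,\oT]}_{e_d}=\overline{u}_d/\sqrt{\oT}$, whereas \eqref{eq:def_phi} as printed yields $\phi^{(1)}_{e_d}=(0-\overline{u}_d)$ and hence $-\overline{u}_d/\sqrt{\oT}$. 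Likewise, re-expanding $\mM_{\bz,\oT^{(1)}}$ about $[\bz,\oT^{(2)}]$ must give $f_{2e_d}=(\oT^{(1)}-\oT^{(2)})/(2\oT^{(2)})$ by \eqref{eq:macro_f}, while the printed recursion gives the opposite sign. So your proof, once completed, establishes the theorem with $\ou^{(1)}-\ou^{(2)}$ and $\frac12(\oT^{(1)}-\oT^{(2)})$ in \eqref{eq:def_phi}: the statement as printed has the two centers transposed in the increments (a transcription slip relative to the cited source), and the verification step you proposed would have produced a contradiction, not a confirmation. A blind proof must resolve this and report the corrected constants, not assume agreement.

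One smaller analytic repair: membership in $\mF$ controls polynomial moments but not the two-sided Laplace transform, so $G(\boldsymbol{t};\ou,\oT)$ need not converge for any $\boldsymbol{t}\neq\boldsymbol{0}$ (consider $f(\bv)=e^{-\sqrt{|\bv|}}$). This does not damage the argument, but it should be phrased coefficientwise: the identity between the two Gaussian exponentials times $e^{L}$ is an identity of formal power series in $\boldsymbol{t}$ whose $\boldsymbol{t}^{\al}$-coefficients are polynomials in $\bv$; comparing coefficients of a fixed $\boldsymbol{t}^{\al}$ involves only finitely many terms, and integrating the resulting finite polynomial identity against $f$ is what $f\in\mF$ actually licenses.
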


\subsubsection{Building new collision model}
In the problem that is far from equilibrium, the BGK model could not describe the movements of the particles well, so the quadratic collision model should be utilized. However, the quadratic collision is quite expensive to simulate. Thus, we want to build a new collision model combining the quadratic and BGK collision model. 

For the single-species Boltzmann equation, the BGK collision model could not give the correct Prandtl number \cite{Struchtrup}, and the Shakhov model has fixed this problem by modifying the Maxwellian term in the BGK model. To build the new collision model, we borrow this idea from the Shakhov model. 
Instead of modifying the Maxwellian, we will revise the expansion coefficients of the collision model under the framework of the Hermite spectral method. 

In the new collision model, we first choose $M_0 > 0$ to decide the length of the expansion coefficients which are derived from the quadratic collision term, then the rest are obtained from the BGK collision model. Precisely, the new collision model could be written as 
\begin{equation}
    \label{eq:new_coll}
     Q\UP[ij]_{\rm new}  = \sum Q\UP[ij]_{\alpha, \rm new} \mH_{\alpha}\UP[i](\bv),
     \end{equation}
with 
\begin{equation}
\label{eq:coe_new_coll}
    Q\UP[ij]_{\alpha,\rm new}=\left\{ 
    \begin{array}{ll}
        \sum\limits_{|\lbd|, |\kp| \leqslant M_0}\Aalk(\bu,\zeta\UP[i]) f_{\la}
		\UP[i]f_{\ka}\UP[j],  & |\alpha| \leqslant M_0, \\[4mm]
        \nu\UP[ij]_{M_0} Q\UP[ij]_{\alpha, \rm BGK}, & |\alpha| > M_0.
    \end{array}
    \right.  
\end{equation}
Here, $\nu\UP[ij]_{M_0}$ are some positive constants deciding the damping rate, where we follow the same idea in \cite{ZhichengHu2019} to decide this parameter. Since $\Aalk$ could be regarded as a matrix respect to $\la$ and $\ka$ for each fixed $\al$, and then, we set $\nu\UP[ij]_{M_0}$ as the minimum of all eigenvalues of $\Aalk$ among $|\al|\leqslant M_0$. Building the new collision model \eqref{eq:new_coll} here could also be understood as combining the quadratic and BGK collision models as in \cite{ZhichengHu2019, Approximation2019}. In this way, the computational cost for the quadratic collision part in the new collision model will be $\mO(s^2M_0^9)$, which will be reduced greatly compared to \eqref{eq:exp_col}. 

For the new collision model, the length of the quadratic collision model $M_0$ is problem-dependent as the further the system from the equilibrium, the larger $M_0$ is needed. However, there is no standard principle to decide $M_0$. There is an adaptive method in \cite{Adaptive2021} to choose $M_0$, which is still an initial work, and we have not adopted the algorithm therein. 

\begin{remark}
In this section, we are focusing on approximating the quadratic collision term in the homogeneous problem. Therefore, the expansion center to calculate the expansion coefficients $\Aalk$ and build the new collision model \eqref{eq:new_coll} is set as $[\bz, \zeta\UP[i]]$. As is stated in Remark \ref{rmk:otherexpans}, the expansion center could be chosen arbitrarily, so as the expansion of the BGK collision term \eqref{eq:expan_BGK}. 
The transition between different expansion centers could be derived through Theorem \ref{thm:project}, and we refer the readers to \cite{ZhichengHu2019} for more details. 
\end{remark}

\section{Moment equations and numerical algorithm}
\label{sec:numerical}
In the last section, we have explained the approximation of the quadratic collision term for the homogeneous problem under the framework of the Hermite expansion. In this section, the numerical scheme to solve the whole Boltzmann equation \eqref{eq:nondim_Bol} will be introduced. We will first introduce the moment systems in the framework of the Hermite expansions. Since the Strang splitting is adopted to solve the convection and collision part separately, the moment equations will also be derived for the convection and collision step. Precisely, the Boltzmann equation \eqref{eq:nondim_Bol} is split into the convection step and the collision step as 
\begin{itemize}
    \item convection step:
    \begin{equation}
        \label{eq:con}
        \pd{f\UP[i]}{t} + \bv \cdot \nabla_{\bx} f\UP[i] = 0,
    \end{equation}
    \item collision step:
    \begin{equation}
        \label{eq:col}
        \pd{f\UP[i]}{t} = \sum_{j=1}^s
        \frac{1}{\Kn_{ij}}\mQ\UP[ij][f\UP[i],f\UP[j]].
    \end{equation}
\end{itemize}

\subsection{Moment equations} 
\label{sec:moment equations}
To obtain the finite moment system, we first conduct a truncation of the expansion \eqref{eq:Her-expan} as 
\begin{equation}
    \label{eq:Her-expan-fin}
    f\UP[i](t, \bx, \bv) \approx \sum_{|\alpha|\leqslant M} f_{\al}\UP[i](t,\bx)\mH^{\ou\UP[i],\bT\UP[i] }(\bv) \in \bbH_M[\ou^{(i)},\oT^{(i)}], \qquad M \in \mathbb{N},
\end{equation}
where the expansion center is chosen differently for the convection and collision step, and $\bbH_M[\ou^{(i)},\oT^{(i)}]$ is the functional space
\begin{equation}
    \label{eq:space_HM}
    \bbH_M[\ou,\bT]=\text{span}\left\{\mH\aut(\bv) \Big| 
        |\alpha| \leqslant M \right\}.
\end{equation}
For the convection step, we follow the framework in \cite{ZhichengHu2019}, and the identical expansion center all over the spatial space is utilized for each species. Precisely, the choice of $[\ou\UP[i],\bT\UP[i]]$ is based on a priori estimation of the problem such that the expansion center is not too far away from the average macroscopic variables. Then, substituting \eqref{eq:Her-expan-fin} into \eqref{eq:con}, we could derive the moment equations for the convection part as 
\begin{equation}
    \label{eq:mom_con}
    \pd{\bf\UP[i]}{t}+\sum_{d=1}^3\boldsymbol{A}_d\pd{\bf\UP[i]}{x_d}=0.
\end{equation}
where $\bf\UP[i]$ is a column vector of all $f_{\al}\UP[i]$ for $|\al|\leqslant M$ and $\boldsymbol{A}_d$ is a constant matrix made up by the convection term, the detailed form of which can be referred to \cite{ZhichengHu2019} and will be omitted here. Moreover, without loss of generality, we will record $\bf\UP[i]$ as $\bf\UP[i] \in \bbH_M[\ou^{(i)},\oT^{(i)}]$ if the corresponding distribution function $f\UP[i] \in\bbH_M[\ou^{(i)},\oT^{(i)}]$.
For the collision term, the expansion center could be the same as the convection term, the same expansion center as in Sec. \ref{sec:collision}, or 
some other choices could also be utilized. For any expansion center, substituting \eqref{eq:Her-expan-fin} into \eqref{eq:col} and matching the basis functions on both sides, we can derive the moment equations for the collision step as 
\begin{equation}
    \label{eq:mom_col}
    \pd{f\UP[i]_{\alpha}}{t}  = \sum_{j = 1}^s \frac{1}{\Kn_{ij}}Q\UP[ij]_{\alpha}, \qquad |\alpha| \leqslant M. 
\end{equation}
To solve the moment equations of the convection step \eqref{eq:mom_con} and the collision step \eqref{eq:mom_col}, the standard finite volume method will be adopted, which we will introduce in detail in the following sections. 

Here, we also want to emphasize that if different expansion centers are utilized for the convection and the collision steps, the transition of the distribution functions between different expansion centers can be achieved with Theorem \ref{thm:project}.

\subsection{Numerical scheme}
\label{sec:conv}
Supposing the spatial domain $\Omega \in \bbR^3$ is discretized by a uniform grid with the cell size $\Delta \bx$ and the cell centers $\bx_k = (x_{k_1}, x_{k_2}, x_{k_3})$. Using $\bf\UP[i]_{n,k} \in \bbH_M[\ou^{(i)},\oT^{(i)}]$ to approximate the average of $\bf\UP[i]$ over the $k$th grid cell at time $t^n$, then we will introduce the numerical scheme for the Boltzmann equation. It will split into two steps, which we will begin from the convection step. The system \eqref{eq:mom_con} can be solved by the forward-Euler method with time step length $\Delta t$ as follows:
\begin{equation}
    \label{eq:scheme_conserv}
    \frac{\bf^{(i),\ast}_{n, k}-\bf\UP[i]_{n,k}}{\Delta t}+\sum_{d=1}^3
    \frac{\bF\UP[i]_{n,k+\frac12 e_d}-\bF\UP[i]_{n,k-\frac12 e_d}}{\Delta x_d}=0,
\end{equation}
where the finite volume method is employed for spatial discretization and $\bf^{(i),\ast}_{n, k}$ is the result of convection step \eqref{eq:mom_con}. $\bF\UP[i]_{n, k+\frac12 e_d}$ is the HLL flux \cite{HLL} given by
\begin{equation}
    \label{eq:HLL_flux}
    \bF\UP[i]_{n,k+\frac12 e_d}=
    \left\{
    \begin{array}{ll}
    A_d\bf_{n,k+\frac12 e_d}^{(i),L},& \la^L_d\geqslant 0, \\
    \frac{\la_d^RA_d\bf_{n,k+\frac12 e_d}^{(i),L}-
    \la_d^LA_d\bf_{n,k+\frac12 e_d}^{(i),R}+\la_d^R\la_d^L
    \left(\bf_{n,k+\frac12 e_d}^{(i),R}-\bf_{n,k+\frac12 e_d}^{(i),L}\right)}
    {\la_d^R-\la_d^L},& \la^L_d<0<\la^R_d, \\[2mm]
    A_d\bf_{n,k+\frac12 e_d}^{(i),R},& \la^R_d\leqslant 0,
    \end{array}
    \right.
\end{equation}
where $\bf_{n,k+\frac12 e_d}^{(i),L}$
and $\bf_{n,k-\frac12 e_d}^{(i),R}$ can be computed with
the linear reconstruction (also used in \cite{ZhichengHu2019}) 
\begin{equation}
    \label{eq:linear}
    \begin{split}
    & \bg_d^n=\frac{\bf_{n,k+e_d}-\bf_{n,k-e_d}}{2\Delta x}, \\
    & \bf_{n,k+\frac12 e_d}^L=\bf_{n,k}+\frac12\Delta x\bg_d^n, \\
    & \bf_{n,k-\frac12 e_d}^R=\bf_{n,k}-\frac12\Delta x\bg_d^n,
    \end{split}
\end{equation}
or the 3-order WENO reconstruction \cite{Weno}
\begin{equation}
    \label{eq:WENO}
    \begin{split}
    & \bf^{L,1}=\frac32\bf_{n,k}-\frac12\bf_{n,k-e_d}, \quad
    \bf^{L,2}=\frac12\bf_{n,k}+\frac12\bf_{n,k+e_d}, \\
    & \bf^{R,1}=\frac32\bf_{n,k}-\frac12\bf_{n,k+e_d}, \quad
    \bf^{R,2}=\frac12\bf_{n,k}+\frac12\bf_{n,k-e_d}, \\
    & \om_{L,1}=\frac{\ga_1}{\Big[\eps+(\bf_{n,k}-\bf_{n,k-e_d})^2\Big]^2}, \quad
    \om_{L,2}=\frac{\ga_2}{\Big[\eps+(\bf_{n,k+e_d}-\bf_{n,k})^2\Big]^2}, \\
    & \om_{R,1}=\frac{\ga_1}{\Big[\eps+(\bf_{n,k+e_d}-\bf_{n,k})^2\Big]^2}, \quad
    \om_{R,2}=\frac{\ga_2}{\Big[\eps+(\bf_{n,k}-\bf_{n,k-e_d})^2\Big]^2}, \\
    & \bf_{n,k+\frac12 e_d}^L=\frac{\om_{L,1}\bf^{L,1}+\om_{L,2}\bf^{L,2}}
    {\om_{L,1}+\om_{L,2}}, \quad
    \bf_{n,k-\frac12 e_d}^R=\frac{\om_{R,1}\bf^{R,1}+\om_{R,2}\bf^{R,2}}
    {\om_{R,1}+\om_{R,2}}, \\
    & \eps=10^{-6},\quad \ga_1=\frac13, \quad \ga_2=\frac23,
    \end{split}
\end{equation}
where the square of $\bf$ in \eqref{eq:WENO} indicates squaring by each expansion coefficient, and the superscript $(i)$ of $\bf$ is omitted in \eqref{eq:linear} and \eqref{eq:WENO} for simplicity.

In \eqref{eq:HLL_flux}, $\la_d^L$ and $\la_d^R$ are the minimum and maximum eigenvalue of $A_d, d = 1, 2, 3$ respectively.  Due to the same form of $A_d$ here as in \cite{ZhichengHu2019}, $\la_d^L$ and $\la_d^R$ are set as $\la_d^L = \overline{u}_d - C_{M+1} \sqrt{\bT}$ and $\lambda_d^R = \overline{u}_d + C_{M+1}\sqrt{\bT}$. The time step is chosen to satisfy the CFL condition as 
\begin{equation}
    \label{eq:CFL}
    \Delta t \sum_{d = 1}^3 \frac{|\overline{u}_d| + C_{M+1}\sqrt{\bT}}{\Delta x_d} < 1.
\end{equation}

After obtaining the results $\bf^{(i),\ast}_{n, k}$ at each cell in the convection step, we will update the collision step. Since the expansion center of the convection step and the collision step may be different, we will apply the algorithm in Theorem \ref{thm:project} to obtain the new expansion coefficients in the expansion center of the collision step.

For the collision step, we choose a similar expansion center as in Sec. \ref{sec:collision}. Precisely, the expansion center for species $i$ is $[\bu_{n+1, k}, \mT_{n+1, k}/m\UP[i]]$, where $\bu_{n+1, k}$ is the the average macroscopic velocity in \eqref{eq:macro_tot} and $\mT_{n+1,k}$ is the average temperature in \eqref{eq:barT} in the $k$-th cell at time step $n+1$. Since the collision term does not change the average macroscopic velocity and the average temperature in each cell, the expansion center $[\bu_{n+1, k}, \mT_{n+1, k}/m\UP[i]]$ here is computed using the distribution function $\bf^{(i), \ast}_{n,k}$ after the convection step.  
Thus, with the projection operator defined in \eqref{eq:def_proj}, we first compute $\bf^{(i), \ast\ast}_{n, k}$ as 
\begin{equation}
    \label{eq:coll_step_1}
    \bf^{(i),\ast\ast}_{n, k}= \Pi\left[\bu_{n+1, k}, \mT_{n+1, k}/m\UP[i]\right](\bf^{(i),\ast}_{n, k}).    
\end{equation}
Then, at the $k$-th cell, we compute the collision term  $\bQ\UP[ij]_{\rm new,n,k}$ in \eqref{eq:coe_new_coll} with the distribution function $\bf^{(i), \ast\ast}_{n, k}$. The forward-Euler scheme is adopted here to update the distribution function in the collision step \eqref{eq:mom_col} as 
\begin{equation}
\label{eq:coll_step_2}
\bf^{(i),\ast\ast\ast}_{n,k} =\bf^{(i),\ast\ast}_{n,k}+\Delta t \sum_{j=1}^s\frac{1}{Kn_{ij}}\bQ\UP[ij]_{\rm new, n, k}. 
\end{equation}
The final step is to project the distribution function $\bf^{(i),\ast\ast\ast}_{n,k}$ to the expansion center $[\ou\UP[i], \oT\UP[i]]$  of the convection step as 
\begin{equation}
    \label{eq:coll_step_3}
    \bf\UP[i]_{n+1,k}=\Pi[\ou\UP[i],\oT\UP[i]](\bf^{(i),\ast\ast\ast}_{n, k}).
\end{equation}
For the forward Euler scheme in \eqref{eq:coll_step_3}, the Runge-Kutta scheme could also be utilized, and the time step $\Delta t$ may also be restricted by the collision term. A small enough and problem-dependent $\Delta t$ should be adopted, which we will not discuss in detail here. The Maxwell boundary condition \cite{Maxwell1878} is adopted here and the detailed implementation can be referred to \cite{ZhichengHu2019}. 

For now, we have finished the whole numerical scheme, and it will be summarized as Algorithm \ref{algo:inhomo}. 
\begin{algorithm}[htbp]
    \caption{Numerical algorithm for one time step}
    \label{algo:inhomo}
    \begin{algorithmic}[1]
        \item Given $\bf_{n,k}\UP[i]\in \bbH_M[\ou\UP[i],\oT\UP[i]]$ at
        each cell $k$ for time step $t_n$.
        \item Solve the convection step \eqref{eq:mom_con} 
        with the numerical scheme \eqref{eq:scheme_conserv} to gain
        $$\bf^{(i),\ast}_{n,k}\in \bbH_M[\ou^{(i)},\oT^{(i)}].$$
        \item Calculate the expansion center  $\bu_{n+1, k}, \mT_{n+1, k}$ of $\bf^{(i),\ast}_{n,k}$ at each cell $k$.
        \item In each cell, obtain $\bf^{(i),\ast\ast}_{n,k}$ with       \eqref{eq:coll_step_1}.
        \item Compute the collision term $\bQ\UP[ij]_{\rm new,n, k}$ in \eqref{eq:coe_new_coll} with $\bf^{(i),\ast\ast}_{n,k}$.
        \item Update $\bf^{(i),\ast\ast \ast}_{n,k} $ with the forward Euler scheme \eqref{eq:coll_step_2}. 
        \item Obtain the final $\bf_{n+1, k}\UP[i] \in \bbH_M[\ou\UP[i],\oT\UP[i]]$ with \eqref{eq:coll_step_3}
        and enter the next time step.
    \end{algorithmic}
\end{algorithm}

\section{Numerical experiments}
\label{sec:experiment}
In this section, several numerical examples are studied to validate the numerical algorithm proposed in the last section. Precisely, the spatial homogeneous test with Krook-Wu solution, two spatially one-dimensional and a spatially two-dimensional problem are tested. For the spatially non-homogeneous problems, the SPARTA \cite{Sparta} is utilized to obtain the reference solution with DSMC method, and the mixture of Argon and Krypton is taken as the working gas, the parameters of which are listed in Tab. \ref{table:coll_para} in App. \ref{app:col_param}. In the final example, we study the Krook-Wu solution with $100$ species to show the superiority of this numerical method.

\subsection{Spatially homogeneous case: Krook-Wu solution}
\label{subsec:2KW}
For the spatially homogeneous problem, we will consider a constant collision kernel. In this case, the spatially homogeneous two-species Boltzmann equation can be simplified as
\begin{equation}
    \label{eq:KW2}
    \pd{f^{(i)}}{t}(t,\bv)=\sum_{j=1}^s\int_{\mR^3}\int_{S^2}B_{ij}
    \left[f\UP[i](\bv')f\UP[j](\bv'_{\ast})-f\UP[i](\bv)f\UP[j](\bv_{\ast})\right]
    \rd\varsigma \rd\bv_{\ast}, 
\end{equation}
where $s = 2$ is the number of species. $B_{ij}=B_{ji}=\frac{\la_{ij}}{4\pi n^{(j)}}$ and $\la_{ij}$ are some positive constants. This problem is also studied in \cite{Shashank2019, Approximation2019}. There exists an exact solution to this problem \cite{krook1977exact} as 
\begin{equation}
    \label{eq:soluKW2}
    f^{(i)}(t,\bv)=n\UP[i]\left(\frac{m\UP[i]}{2\pi K(t)}\right)^{\frac32}
    \exp\left(-\frac{m\UP[i]|\bv|^2}{2K(t)}\right)
    \left[1-3p_iQ(t)+\frac{m\UP[i]}{K(t)}p_iQ(t)|\bv|^2\right],
\end{equation}
where 
\begin{equation}
    \label{eq:paraKW2_1}
    \begin{gathered}
     p_1=\la_{22}-\la_{21}\mu(3-2\mu),  \qquad p_2=\la_{11}-\la_{12}\mu(3-2\mu), \\
      Q(t)=\frac{A}{A\exp(A(t+t_0))-B}, \qquad    K(t)=\frac{n\UP[1]+n\UP[2]}{(n\UP[1]+n\UP[2])+2(n\UP[1]p_1+n\UP[2]p_2)
    Q(t)},
\end{gathered}
\end{equation}
with 
\begin{equation}
    \label{eq:paraKW2_2}
 \mu=\frac{4m\UP[1]m\UP[2]}{(m\UP[1]+m\UP[2])^2}, \qquad A=\frac16\left[\la_{11}+\la_{21}\mu\left(3-2\mu\frac{p_2}{p_1}\right)\right],
    \qquad B=2p_1A, 
\end{equation}
and $t_0$ is a constant. The number density $n\UP[i]$ should be chosen such that $B_{ij}=B_{ji}$, while the mass $m\UP[i]$ can be arbitrary with the following condition satisfied
\begin{equation}
(p_1-p_2)\left[2\mu^2\left(\frac{\la_{21}}{p_1}-\frac{\la_{12}}{p_2}\right)-1\right]=0.
\end{equation}
Here, we choose the same parameters as in \cite{Approximation2019} that  $n\UP[1] = n\UP[2] = 1$, $(m\UP[1], m\UP[2]) = (1, 2)$,  and the constant $t_0 = 3$. Moreover, the constants $\lambda_{ij}$ are chosen as $(\lambda_{11}, \lambda_{12}) = (1, 0.5)$, and $(\lambda_{21}, \lambda_{22}) = (0.5, 1)$. 

It can be directly computed from \eqref{eq:soluKW2} that the average temperature is 
$\mT=1$, and therefore we choose the expansion center as $[\ou\UP[i], \ot\UP[i]] = [\bz, \mT /m\UP[i]], \;i = 1, 2.$ Based on this expansion center, we can obtain the expansion coefficient $f_{\al}\UP[i](t)$ of the exact solution \eqref{eq:soluKW2} as 
\begin{equation}
    \label{eq:exactmom}
    f_{\al}\UP[i](t)=
    \left\{
    \begin{array}{ll}
    \frac{1-\frac{|\al|}{2}}{\al!}\prod\limits_{j=1}^3\Big[(\al_j-1)!!\Big](-2p_i)^
    {\frac{|\al|}2} 
    \exp\left(-\frac{|\al|}2A(t+t_0)\right),
    & \al_1,\al_2,\al_3\; \text{all even}, \\
    0, &\text{Otherwise},
    \end{array}
    \right.
\end{equation}
for all $\alpha\in\bbN^3$ and $i = 1, 2$. 

In the computation, the time step length is fixed as $\Delta t = 0.01$, and the fourth-order Runge-Kutta scheme is adopted. The total expansion order $M$ is $M=20$, and 
two cases with different length of the quadratic collision term $M_0$ are tested, where $M_0$ is set as $M_0 = 5$ and $M_0 = 10$ respectively. The numerical results of
$f_{400}(t)$ from $t=0$ to $t=5$ are given in Fig. \ref{fig:Coe400}, since the expansion coefficients for the lower orders are all constant with respect to $t$. We can find that for the two cases, the numerical results of both species all match well with the exact solution at any time. 
\begin{figure}[h!]
  \centering
  \subfloat[Species 1, $M_0=5$]
  {\includegraphics[width=0.45\textwidth, height=0.365\textwidth,
    clip]{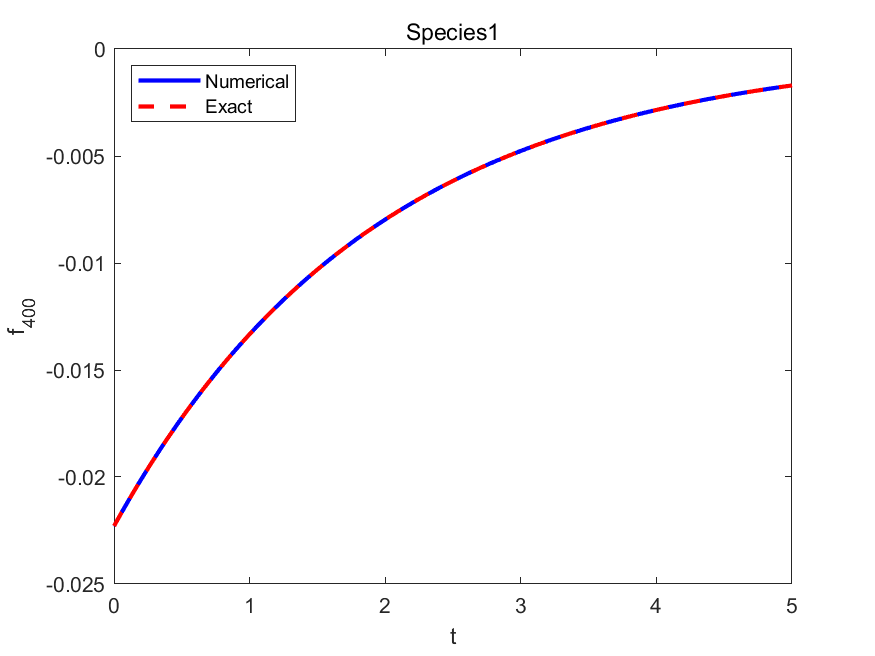}}\hfill
  \subfloat[Species 2, $M_0=5$]
  {\includegraphics[width=0.45\textwidth, height=0.365\textwidth,
    clip]{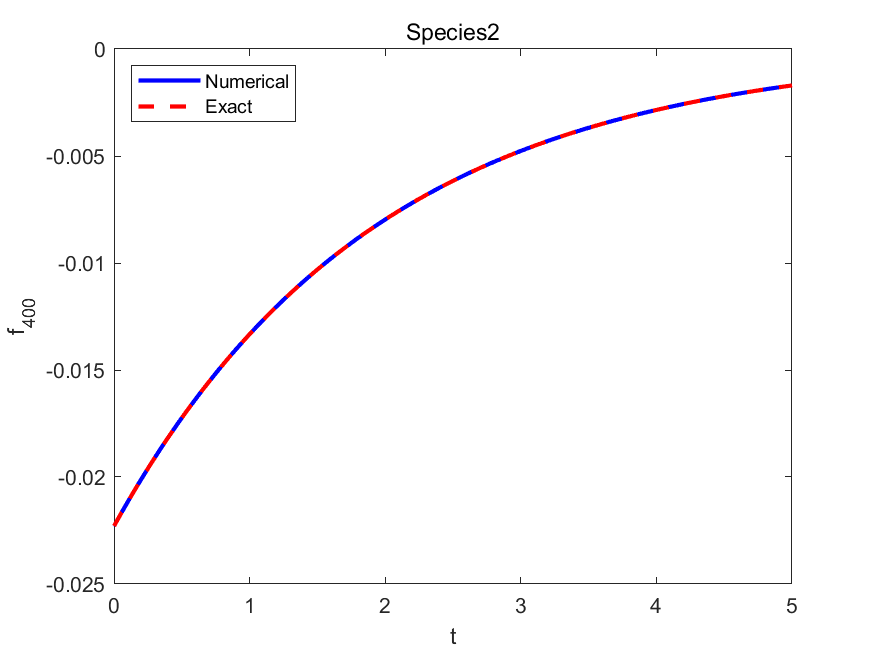}} \\
  \subfloat[Species 1, $M_0=10$]
  {\includegraphics[width=0.45\textwidth, height=0.36\textwidth,
    clip]{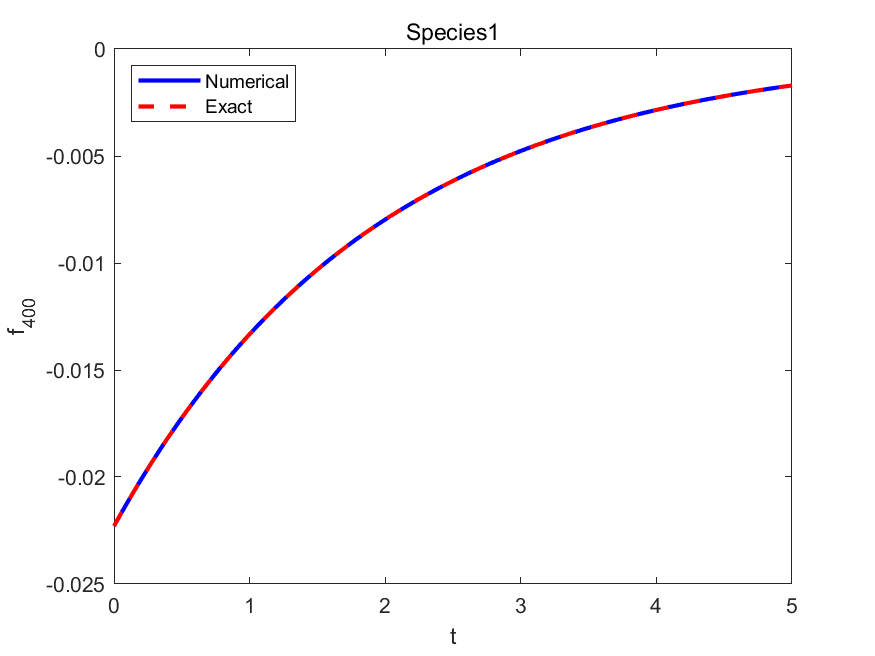}}\hfill
  \subfloat[Species 2, $M_0=10$]
  {\includegraphics[width=0.45\textwidth, height=0.365\textwidth,
    clip]{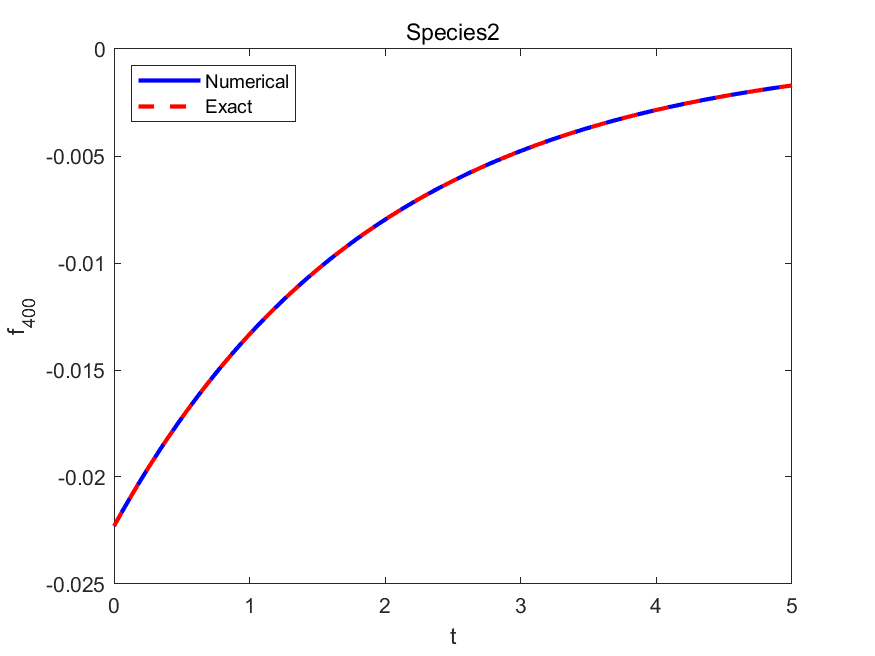}}\hfill
  \caption{Krook-Wu solution with two species in Sec. \ref{subsec:2KW}: Numerical results of $f_{400}$. Blue lines are numerical solutions, while red lines are exact solution.}
  \label{fig:Coe400}
\end{figure}


To see the numerical results of the distribution function more clearly, we randomly select several vectors  $\sigma \in S^2$ for each species and show the results of $f(\bv)$ along $\sigma$ at time $t=0.5$ and $t=1$, which are shown in Fig. \ref{fig:KW2}. In Fig. \ref{fig:KW2},  three cases are plotted with $(M, M_0) = (20, 5), (10, 10), (20, 10)$ respectively. For the case $(20, 5)$, there is an obvious distance between the numerical solution and the exact solution, and for the case $(10, 10)$, the numerical solution is still far away from the numerical solution, while the results for the case $(20, 10)$ are almost on top of the exact solution. From this, we can see that with the increase of $M_0$, the numerical solution could describe the distribution function better. The results in Fig. \ref{fig:KW2} also show that the $M$-order approximation is much better than those with $M_0$-order, which exhibits the importance of the expansion coefficients with the order higher than $M_0$ and validates the advantage of this numerical algorithm as well.

\begin{figure}[!ht]
  \centering
  \subfloat[Species 1, $t=0.5$]
  {\includegraphics[width=0.45\textwidth, height=0.365\textwidth,
    clip]{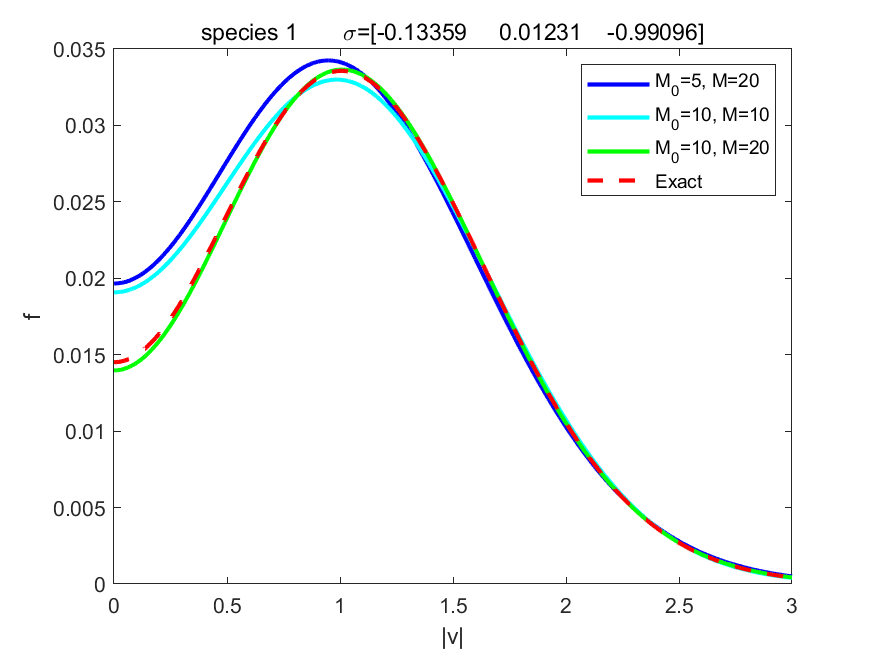}}\hfill
  \subfloat[Species 2, $t=0.5$]
  {\includegraphics[width=0.45\textwidth, height=0.365\textwidth,
    clip]{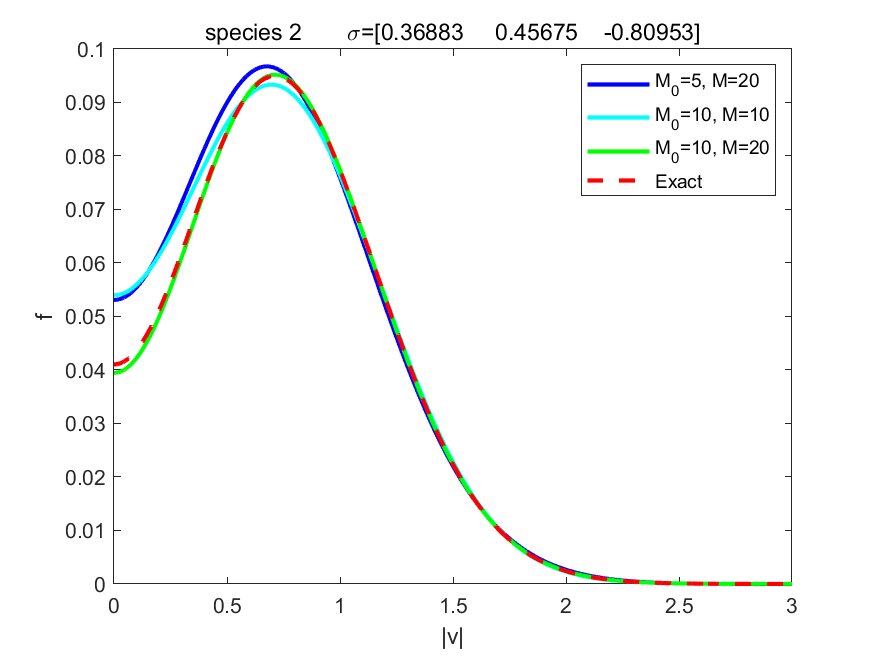}} \\
  \subfloat[Species 1, $t=1$]
  {\includegraphics[width=0.45\textwidth, height=0.36\textwidth,
    clip]{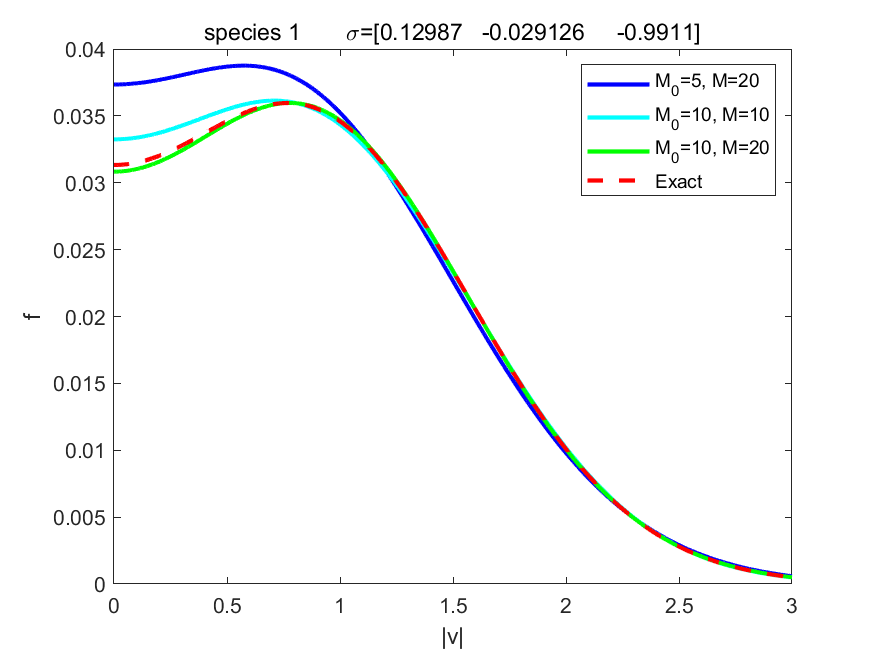}}\hfill
  \subfloat[Species 2, $t=1$]
  {\includegraphics[width=0.45\textwidth, height=0.365\textwidth,
    clip]{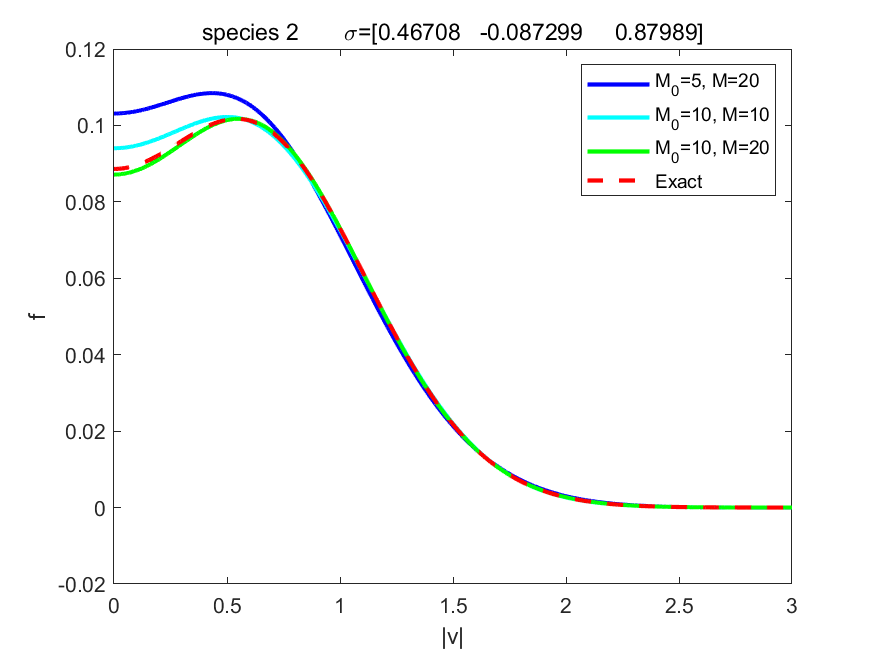}}\hfill
  \caption{Krook-Wu solution with two species in Sec. \ref{subsec:2KW}: Distribution functions with different expansion order for the two species at different time. The top row is at $t = 0.5$ while the bottom row is at $t = 1$.  Blue lines correspond to the results with $M_0=5, M=20$; Cyan lines correspond to $M_0=10, M=10$; Green lines correspond to $M_0=10, M=20$, and red lines correspond to exact solutions.}
  \label{fig:KW2}
\end{figure}

\subsection{1D case: Couette flow}
\label{sec:couette}
In this section, we will consider the Couette flow, which is also tested in \cite{Shashank2019, ZhichengHu2019}. The Ar-Kr mixture between two infinite parallel plates with a distance of $10^{-3}$m will be studied when they arrive at the steady state. Both the left and right plates have the temperature $T=273$K and move in the opposite direction along with the plate with the speed $u^w= (0, \mp50, 0)$ m/s. Besides, both walls are purely diffusive. The VSS collision model \eqref{eq:vss} is utilized here, the detailed parameters of which are shown in Tab. \ref{table:coll_para}.  In this simulation, a uniform grid with $25$ cells is utilized for the spatial discretization with the WENO reconstruction adopted. The CFL condition is set as ${\rm CFL} = 0.45$. Three tests with different number densities are carried out, which are corresponding to different Knudsen numbers. The detailed parameters for the initial conditions are listed in Tab. \ref{table:couette}, where the parameters for the nondimensionalization are also listed. With \eqref{eq:Kn}, we can obtain the corresponding Knudsen number for the three cases as in Tab. \ref{table:couette}. 

\begin{table}[!htp]
\centering
\def\arraystretch{1.5}
{\footnotesize
\begin{tabular}{llll}
\hline 
                                   & Case 1         & Case 2         & Case 3\\
\hline
Initial conditions:                 &               &               &               \\
Temperature $T$(K)                 & $273$           & 273           & 273           \\
Velocity $\bv$ (m/s)               & (0, 0, 0)       & (0, 0, 0)       & (0, 0, 0)       \\
Number density $n_{\rm Ar}$(m$^{-3}$)        & 1.68e21       & 5.6e20        & 1.68e20       \\
Number density $n_{\rm Kr}$(m$^{-3}$)        & 8.009e20      & 2.667e20      & 8.009e19      \\ \hline
Characteristic variables:           &               &               &               \\
Length $x_0$ (m)                   & 1e-3  & 1e-3  & 1e-3 \\
Mass $m_0$ (kg)                    & 6.63e-27      & 6.63e-27      & 6.63e-27      \\
Number density $n_0$(m$^{-3}$)     & 1.68e21       & 5.6e20        & 1.68e20       \\
Temperature $T$(K)                 & 273           & 273           & 273           \\
Velocity $u_0$(m/s)                & 238.377       & 238.377       & 238.377       \\
Knudsen number $(\Kn_{11}$, $\Kn_{12})$ & (0.793, 0.804) & (2.379, 2.411) & (7.931, 8.036) \\
Knudsen number $(\Kn_{21}$, $\Kn_{22})$ & (0.606, 0.555) & (1.819, 1.664) & (6.065, 5.548) \\ \hline
\end{tabular}
}
\caption{Couette flow in Sec. \ref{sec:couette}: Running parameters for Couette flow.}\label{table:couette}
\end{table}

\begin{figure}[!htb]
    \centering
      \subfloat[$y$-component velocity, $u_2$ (m/s)]
    {\includegraphics[width=0.45\textwidth, height=0.36\textwidth,
      clip]{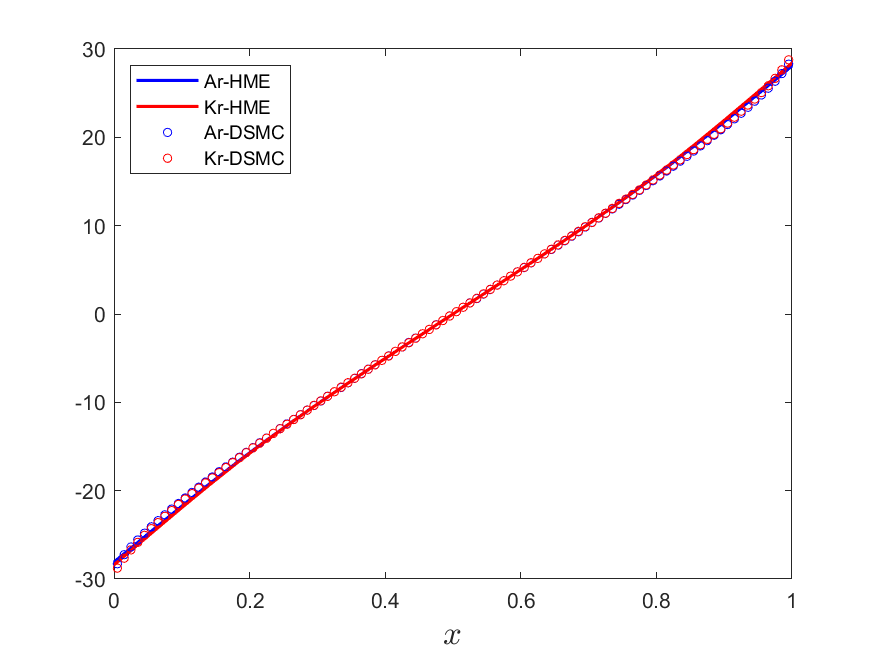}}\hfill
      \subfloat[Temperature, $T$(K)]
    {\includegraphics[width=0.45\textwidth, height=0.36\textwidth,
      clip]{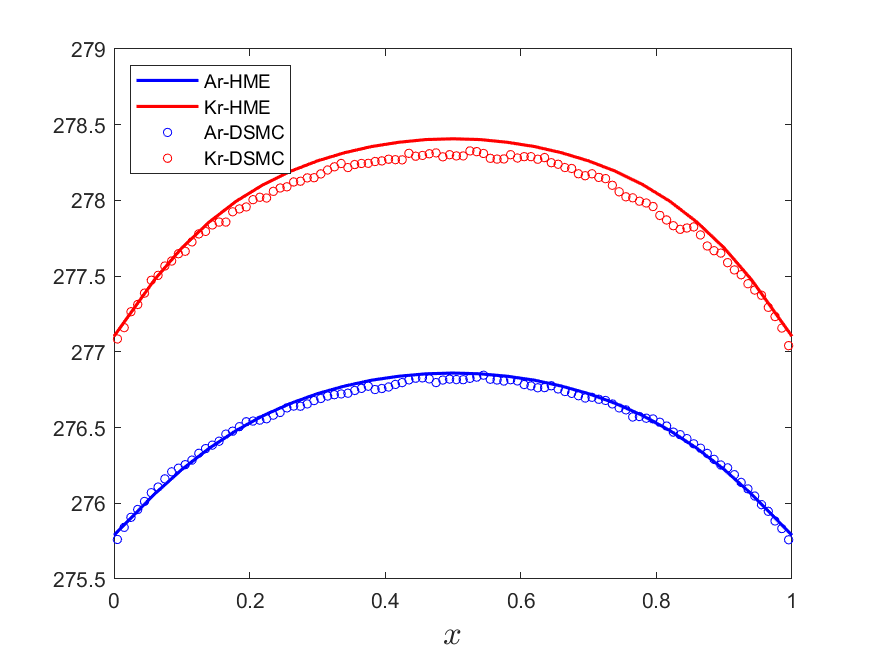}}\\
      \subfloat[Stress tensor, $\sigma_{12}$ (kg$\cdot$ m$^{-1}$ $\cdot$ s$^{-2}$)]
    {\includegraphics[width=0.45\textwidth, height=0.36\textwidth,
      clip]{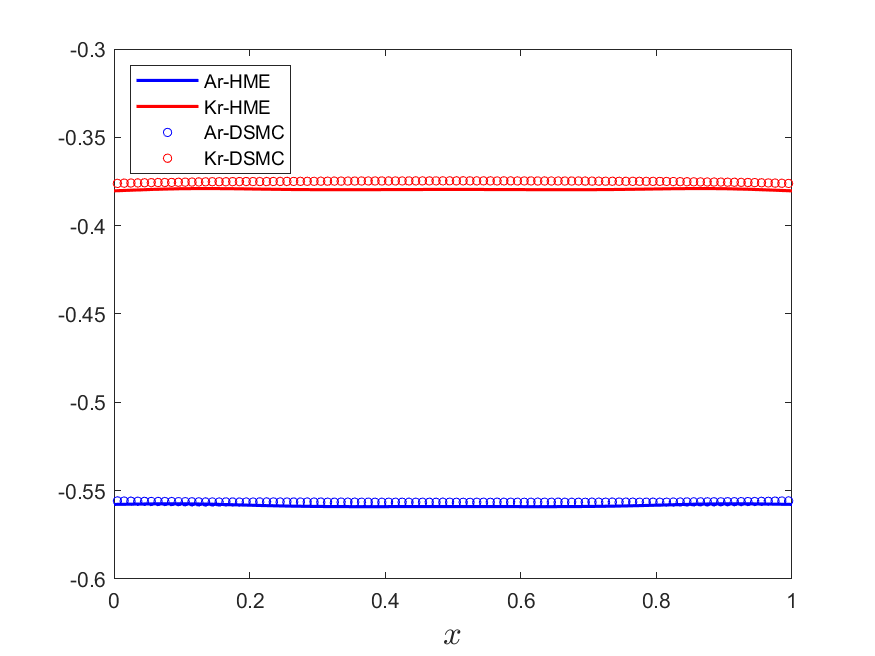}}\hfill
      \subfloat[Heat flux, $q_1$ (kg $\cdot$  s$^{-3}$) ]
    {\includegraphics[width=0.45\textwidth, height=0.36\textwidth,
      clip]{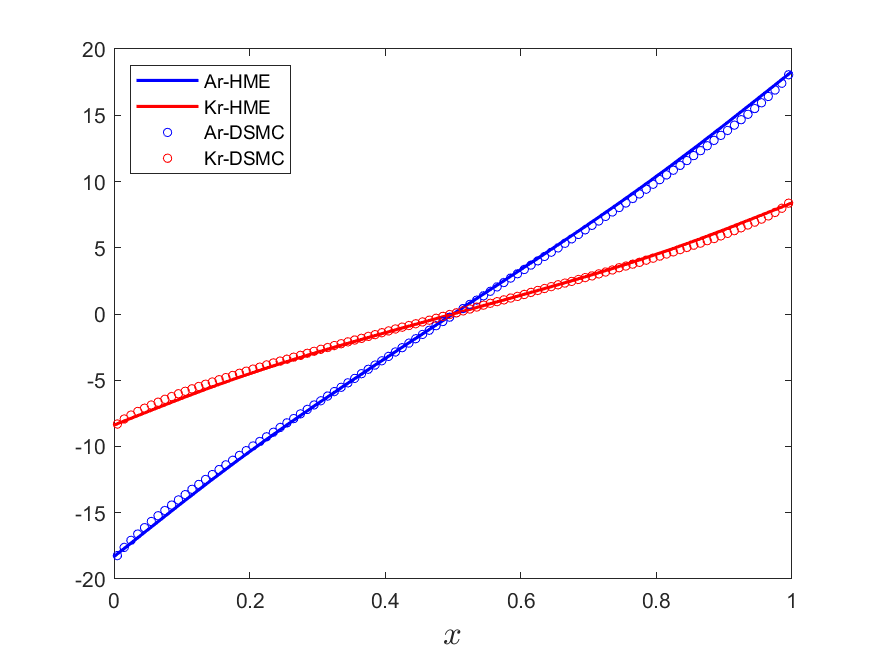}}\hfill
    \caption{Couette flow in Sec. \ref{sec:couette}: Numerical solutions of the Couette flow for case 1, where the Knudsen number is $(\Kn_{11}, \Kn_{12})= (0.793, 0.804)$ and $(\Kn_{21}, \Kn_{22}) = (0.606, 0.555)$. }
    \label{fig:Couette1}
\end{figure}

\begin{figure}[!htb]
    \centering
      \subfloat[$y$-component velocity, $u_2$ (m/s)]
    {\includegraphics[width=0.45\textwidth, height=0.36\textwidth,
      clip]{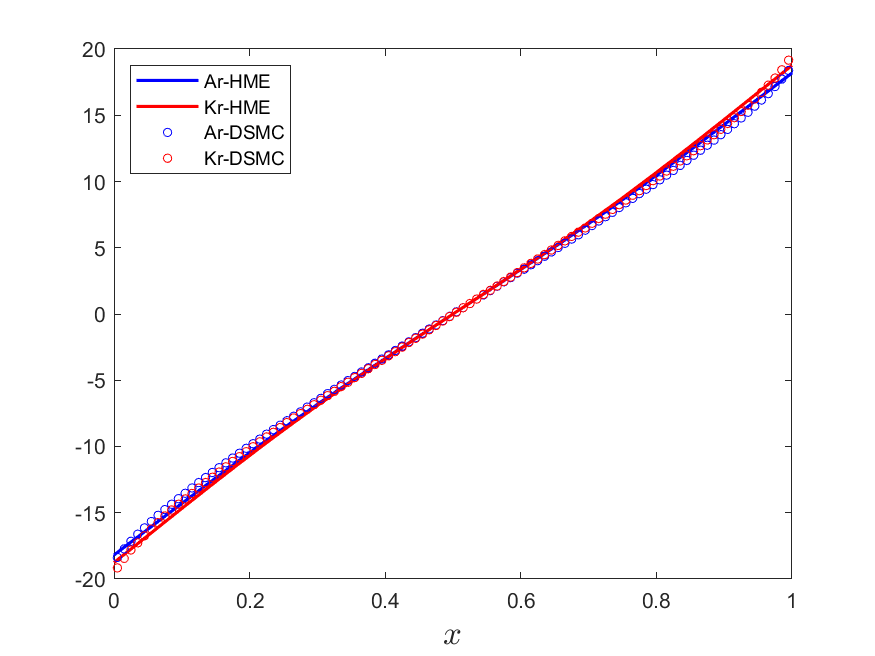}}\hfill
      \subfloat[Temperature, $T$(K)]
    {\includegraphics[width=0.45\textwidth, height=0.36\textwidth,
      clip]{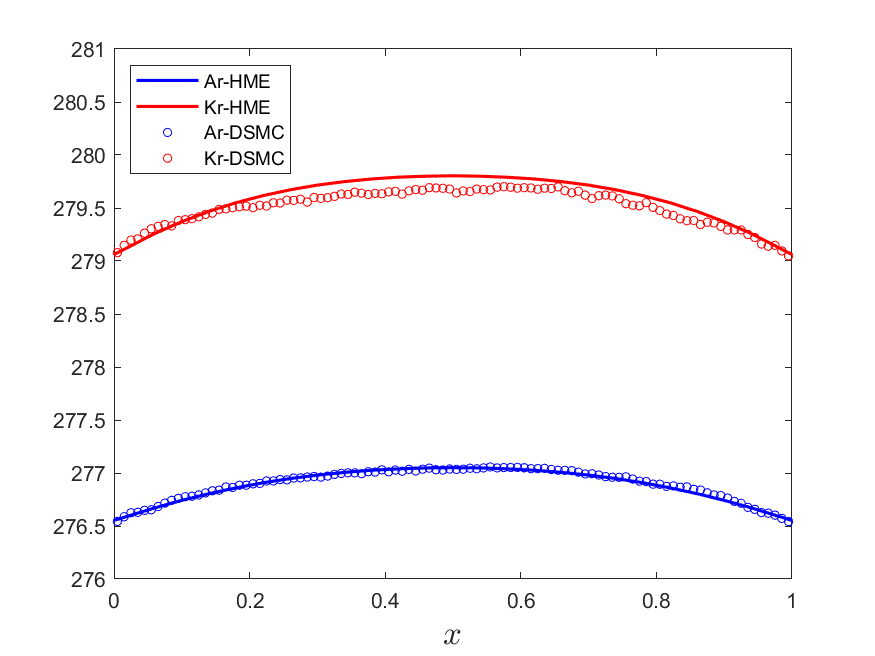}}\\
      \subfloat[Stress tensor, $\sigma_{12}$ (kg$\cdot$ m$^{-1}$ $\cdot$ s$^{-2}$)]
    {\includegraphics[width=0.45\textwidth, height=0.36\textwidth,
      clip]{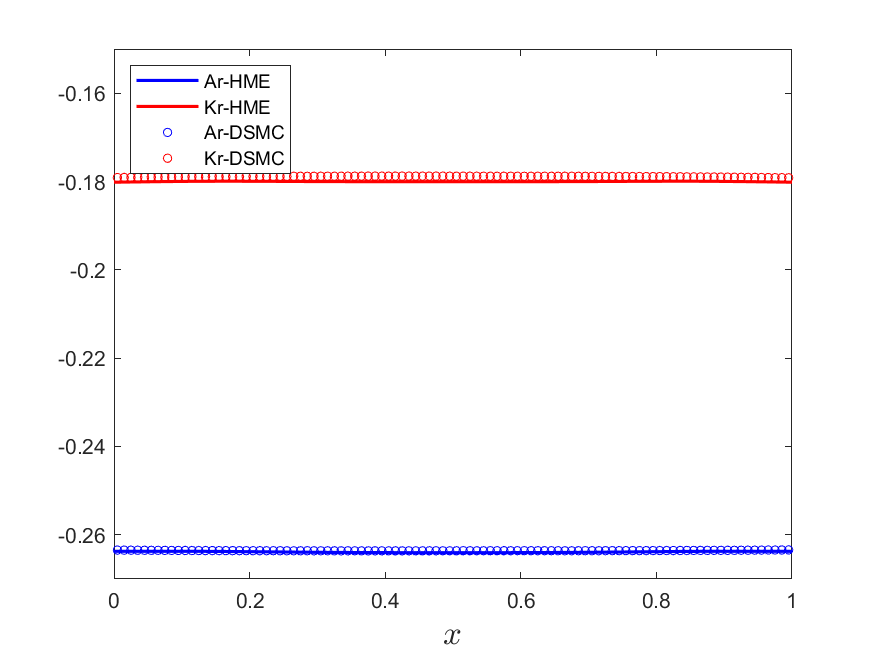}}\hfill
      \subfloat[Heat flux, $q_1$ (kg $\cdot$  s$^{-3}$) ]
    {\includegraphics[width=0.45\textwidth, height=0.36\textwidth,
      clip]{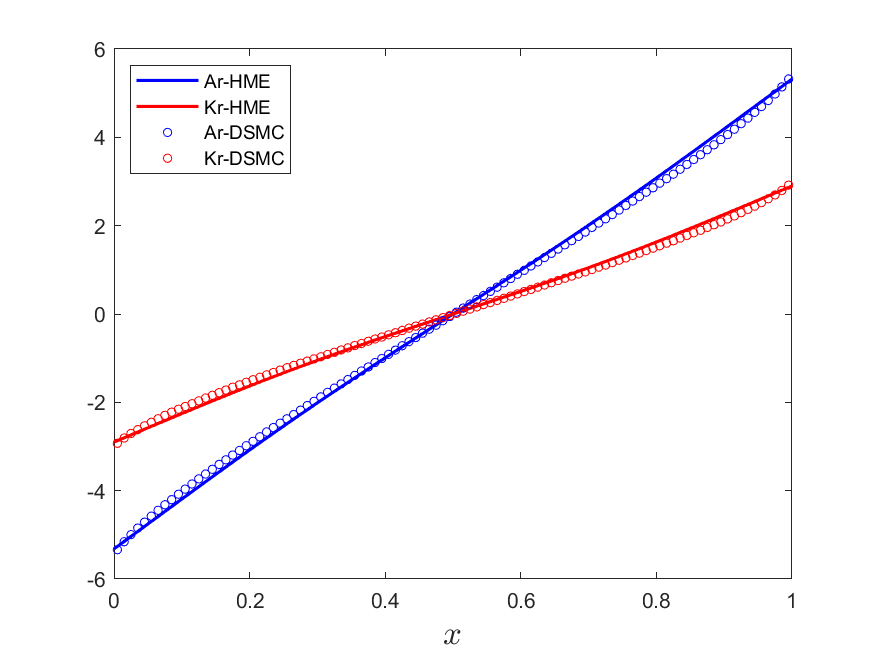}}\hfill
    \caption{ Couette flow in Sec. \ref{sec:couette}: Numerical solutions of the Couette flow for case 2, where the Knudsen number is $(\Kn_{11}, \Kn_{12})= (2.379, 3.411)$ and $(\Kn_{21}, \Kn_{22}) = (1.819, 1.664)$. }
    \label{fig:Couette2}
\end{figure}

\begin{figure}[!htb]
    \centering
      \subfloat[$y$-component velocity, $u_2$ (m/s)]
    {\includegraphics[width=0.45\textwidth, height=0.36\textwidth,
      clip]{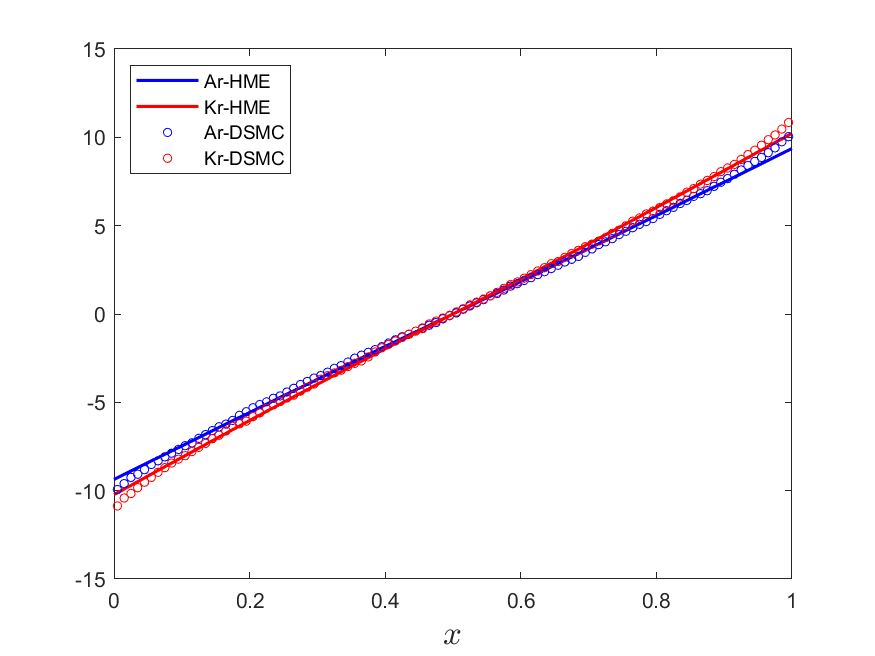}}\hfill
      \subfloat[Temperature, $T$(K)]
    {\includegraphics[width=0.45\textwidth, height=0.36\textwidth,
      clip]{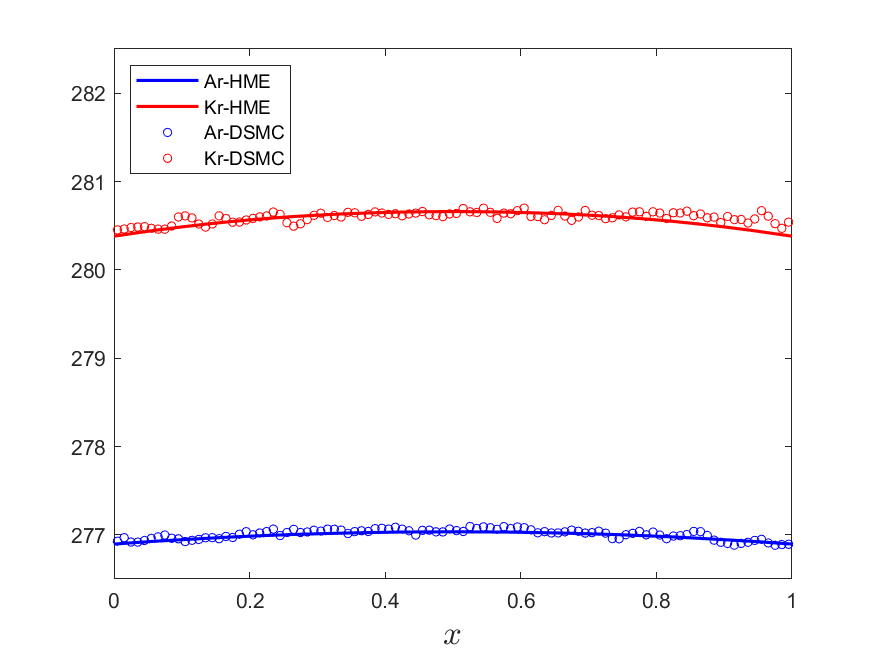}}\\
      \subfloat[Stress tensor, $\sigma_{12}$ (kg$\cdot$ m$^{-1}$ $\cdot$ s$^{-2}$)]
    {\includegraphics[width=0.45\textwidth, height=0.36\textwidth,
      clip]{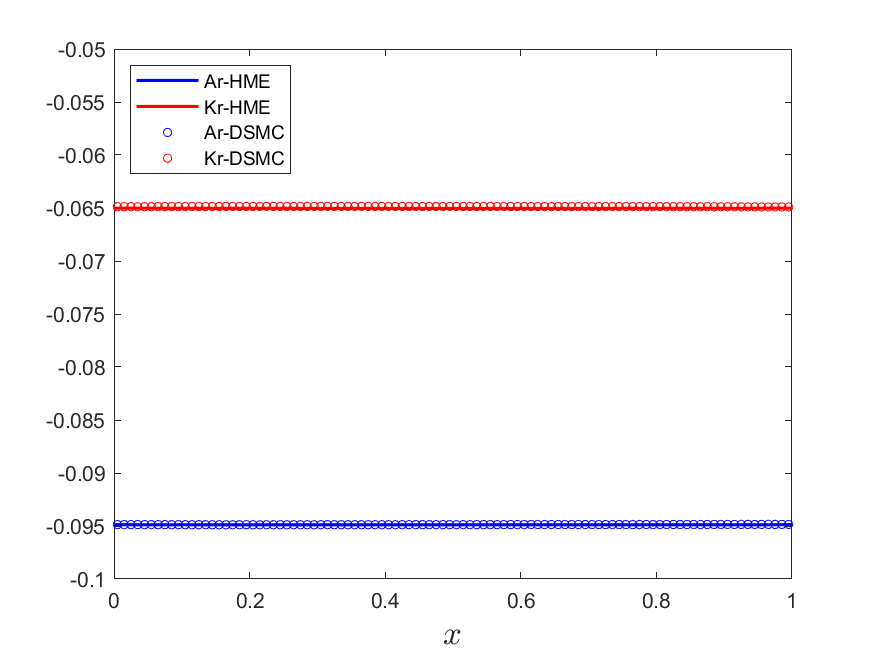}}\hfill
      \subfloat[Heat flux, $q_1$ (kg $\cdot$  s$^{-3}$) ]
    {\includegraphics[width=0.45\textwidth, height=0.36\textwidth,
      clip]{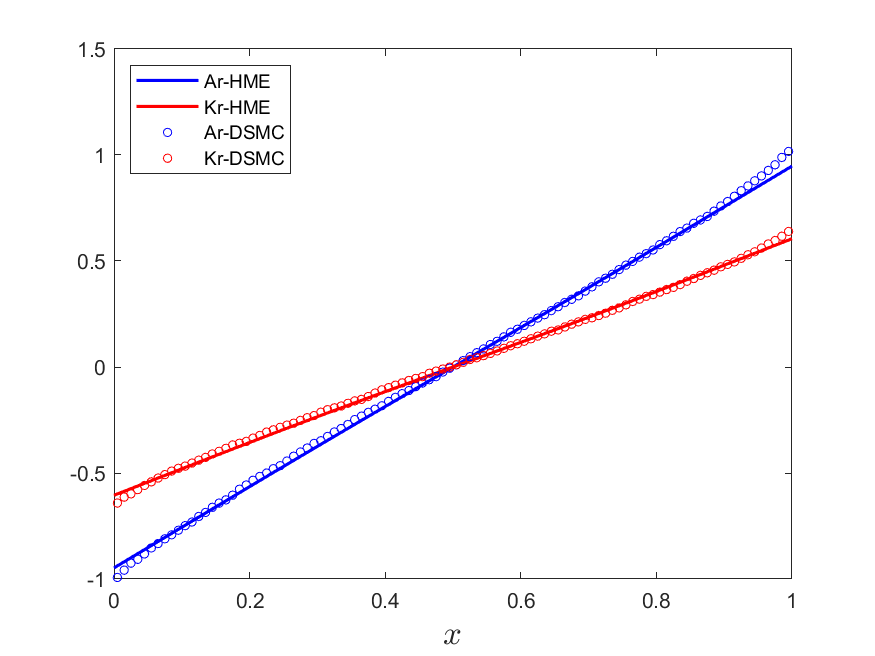}}\hfill
    \caption{
    Couette flow in Sec. \ref{sec:couette}: Numerical solutions of the Couette flow for case 3, where the Knudsen number is $(\Kn_{11}, \Kn_{12})= (7.931, 8.036)$ and $(\Kn_{21}, \Kn_{22}) = (6.065, 5.548)$.}
    \label{fig:Couette3}
\end{figure}

For the Couette flow, the average macroscopic velocity of the whole domain is zero. Therefore, we choose the expansion center for the convection step as $\ou_{\rm Ar} = \ou_{\rm Kr} = \bz$. The average temperature for the initial condition after non-dimensionalization is $\mT_{\rm Ar} = \mT_{\rm Kr} = 1$. Therefore, in the convection step, the expansion center of temperature is set as $\oT_{\rm Ar} = \mT_{\rm Ar} / \hat{m}_{\rm Ar} = 1$ and  $\oT_{\rm Kr} = \mT_{\rm Kr} / \hat{m}_{\rm Kr} = 0.477$, where $\hat{m}_{\rm Ar}$ and $\hat{m}_{\rm Kr}$ are the mass after non-dimensionalization. In the collision step, the expansion center will be decided by local macroscopic variables. The details can be referred to Algorithm \ref{algo:inhomo}. 

In the simulation, the macroscopic velocity in the $y$-direction $u_2$, temperature $T$, stress tensor $\sigma_{12}$, and the heat flux $q_1$ are studied. In the first case, since the Knudsen number is relatively small, we set the length of the quadratic collision term as $M_0 = 10$, and the total moment number as $M = 40$. The numerical results are shown in Fig. \ref{fig:Couette1}, from which we can see that for these both species Ar and Kr, the numerical solutions of these four variables match well with the reference solutions obtained by the DSMC method. Though there is a little difference in the temperature between the numerical solution and the reference, the relative error is less than $0.1\%$. For the second case, we still set $M_0 = 10$ and $M = 40$. The behavior of the numerical solution is similar to that of the first case, as is shown in Fig. \ref{fig:Couette2}, where the numerical solution and the reference solution are almost on top of each other. For the third case, since the Knudsen number is relatively large, we increase $M$ to $60$ and keep $M_0 = 10$ due to the limitation of the computational cost. From Fig. \ref{fig:Couette3}, we can see that for the two species, though there is a little disparity between the numerical solution and the reference solution for the velocity $u_2$ and the heat flux $q_1$ on the boundary, the relative error is quite small. We also find that in this case, there are small oscillations for the reference solution in temperature $T$, while the numerical results are still quite smooth. 

\begin{table}[!htp]
\centering
\def\arraystretch{1.5}
{\footnotesize
\begin{tabular}{llll}
\hline 
                                   & Case 1         & Case 2         & Case 3\\
\hline
Run-time data:  &  &  & \\
Total CPUtime $T_{\rm CPU}$(s): & 15264 & 17217 & 27273 \\ 
Elapsed time(Wall time) $T_{\rm Wall}$(s): & 4923.69 & 6600.08 & 10164.6 \\
\hline
\end{tabular}
}
\caption{Couette flow in Sec. \ref{sec:couette}: Run-time data for Couette flow.}\label{table:couette_runtime}
\end{table}

The simulation of the Couette flow is done on the CPU model Intel Xeon E5-2697A V4 @ 2.6GHz, and 4 threads are used in this test. We implement the simulation for a long enough time to obtain the steady-state solution. In the real simulation, the final time for the simulation is $t = 10$ for all three cases. The total CPU time and wall time are recorded in Tab. \ref{table:couette_runtime}, from which we can see that the computational time is increasing with the increase of $M$. The computational time also shows the high efficiency of this Hermite spectral method, even for the quite rarefied cases.

\subsection{1D case: Fourier heat transfer}
\label{sec:fourier}
The second example of the 1D case is the Fourier heat transfer problem, which is also widely studied \cite{Shashank2019,ZhichengHu2019,Burnett}. In this example, we also consider the Ar-Kr mixture between two infinitely 
large plates. Similar to the Couette flow, the distance between the walls is $10^{-3}$m, and both boundaries are assumed to be purely diffusive. Different from the Couette flow, the Fourier heat transfer problem considers a distinction of temperature instead of velocity on two walls. In this example, a different collision model, the VHS model, is adopted for the collision term, the detailed parameters of which are listed in Tab. \ref{table:coll_para}. For the Fourier heat transfer problem, the steady-state solution is studied and the DSMC method is utilized to obtain the reference solution. 

In the simulation, the same WENO reconstruction with $25$  uniform grid cells and ${\rm CFL} = 0.45$ is utilized as the Couette flow problem. Two different number densities and two different boundary conditions are tested. The detailed parameters for the initial and boundary conditions are listed in Tab. \ref{table:fourier}, where the parameters for the nondimensionalization are also listed. The same expansion centers $[\ou_{\rm Ar}, \oT_{\rm Ar}] = [\bz, 1]$ and $[\ou_{\rm Kr}, \oT_{\rm Kr}] = [\bz, 0.477]$ are adopted here for the convection step while the expansion centers of the collision step are decided with local macroscopic variables as in Algorithm \ref{algo:inhomo}. 

\begin{table}[!ht]
\centering
\def\arraystretch{1.5}
{\footnotesize
\begin{tabular}{lllll}
\hline
                                   & Case 1       & Case 2        & Case 3       & Case 4        \\ \hline
Initial conditions                 &              &               &              &               \\
Temperature $T$(K)                 & 273          & 273           & 273          & 273           \\
Velocity $\bv$ (m/s)               & (0, 0, 0)      & (0, 0, 0)       & (0 ,0, 0)      & (0, 0, 0)       \\
Number density $n_{\rm Ar}$(m$^{-3}$)        & 1.68e21      & 5.6e20        & 1.68e21      & 5.6e20        \\
Number density $n_{\rm Kr}$(m$^{-3}$)        & 8.009e20     & 2.667e20      & 8.009e20     & 2.667e20      \\ \hline
Characteristic variables           &              &               &              &               \\
Length $x_0$ (m)                   & 1e-3  & 1e-3  & 1e-3  & 1e-3 \\
Mass $m_0$ (kg)                    & 6.63e-27     & 6.63e-27      & 6.63e-27     & 6.63e-27      \\
Number density $n_0$(m$^{-3}$)     & 1.68e21      & 5.6e20        & 1.68e21      & 5.6e20        \\
Temperature $T$(K)                 & 273          & 273           & 273          & 273           \\
Velocity $u_0$(m/s)                & 238.377      & 238.377       & 238.377      & 238.377       \\
Knudsen number $(Kn_{11}$, $Kn_{12})$ & (0.77, 0.782) & (2.311, 2.346) & (0.77, 0.782) & (2.311, 2.346) \\
Knudsen number $(Kn_{21}$, $Kn_{22})$ & (0.54, 0.591) & (1.62, 1.774)  & (0.54, 0.591) & (1.62, 1.774)  \\ \hline
Boundary conditions           &              &               &              &               \\
Left wall temperature $T_l$(K)               & 223          & 223         & 109.2          & 109.2      \\
Right wall temperature $T_r$(K)               & 323          & 323         & 436.8          & 436.8         \\
 \hline
\end{tabular}
}
\caption{Fourier heat transfer in Sec. \ref{sec:fourier}: Running parameters for Fourier heat transfer.}
\label{table:fourier}
\end{table}

\begin{figure}[!htb]
    \centering
      \subfloat[Density, $\rho$ (m$^{-3}$)]
    {\includegraphics[width=0.45\textwidth, height=0.36\textwidth,
      clip]{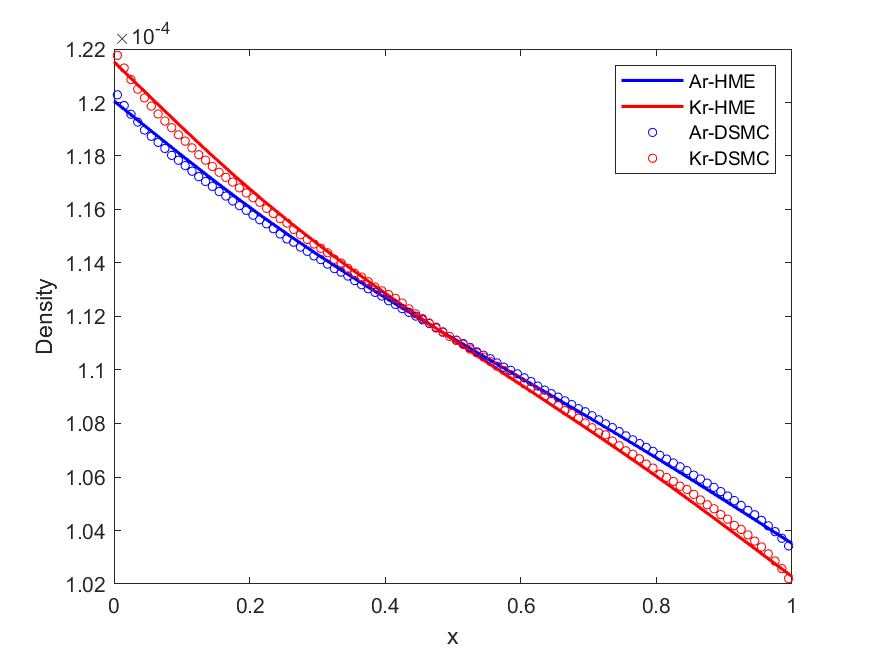}}\hfill
      \subfloat[Temperature, $T$(K)]
    {\includegraphics[width=0.45\textwidth, height=0.36\textwidth,
      clip]{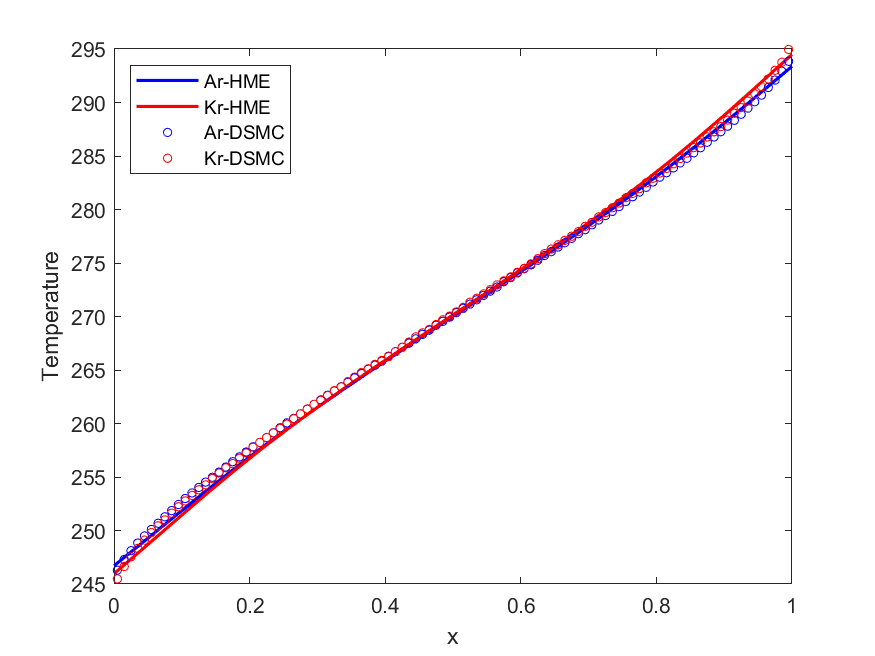}}\\
      \subfloat[Stress tensor, $\sigma_{12}$ (kg$\cdot$ m$^{-1}$ $\cdot$ s$^{-2}$)]
    {\includegraphics[width=0.45\textwidth, height=0.36\textwidth,
      clip]{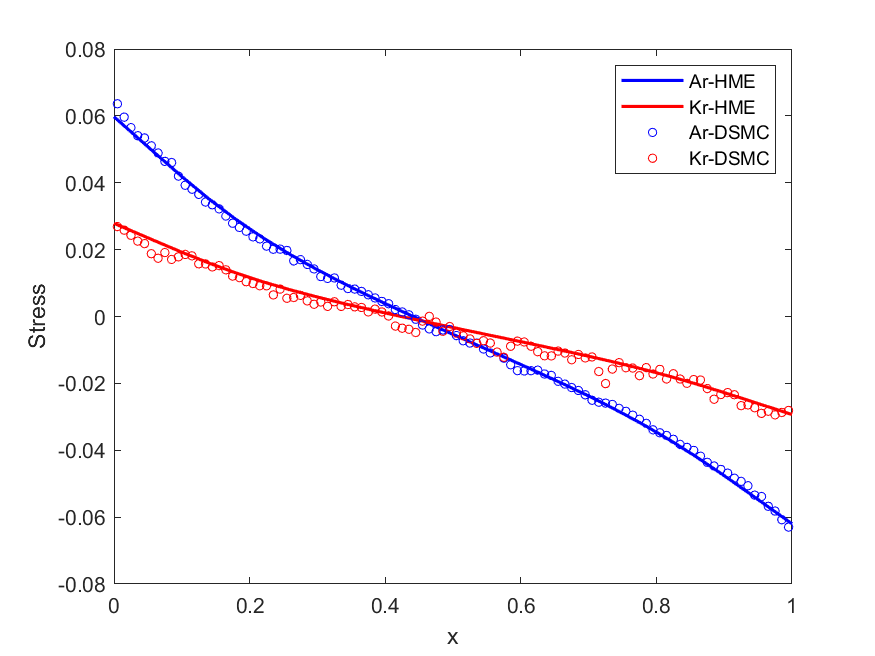}}\hfill
      \subfloat[Heat flux, $q_1$ (kg $\cdot$  s$^{-3}$) ]
    {\includegraphics[width=0.45\textwidth, height=0.36\textwidth,
      clip]{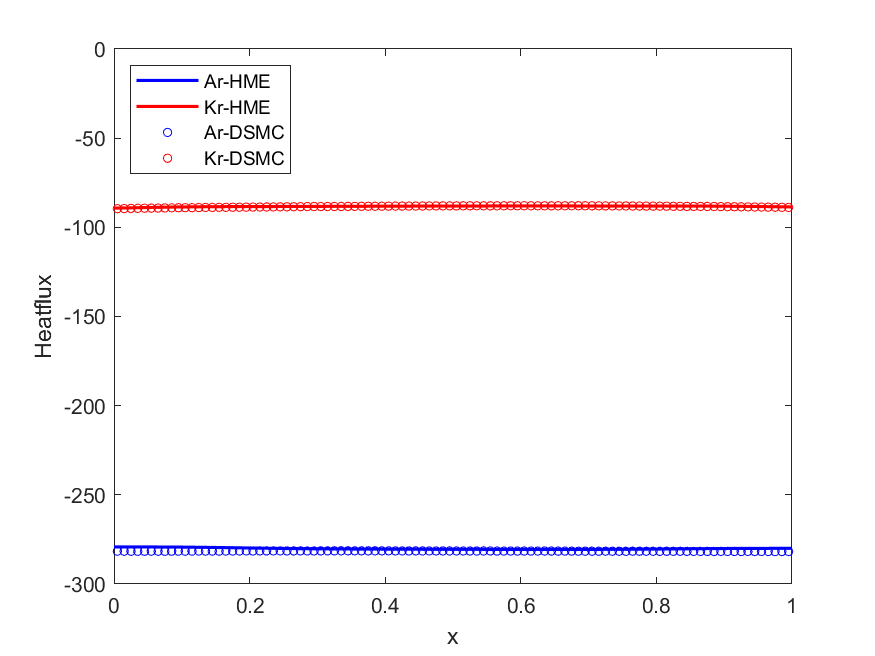}}\hfill
    \caption{Fourier heat transfer in Sec. \ref{sec:fourier}: Numerical results of Fourier heat transfer for case 1, where the Knudsen number is $(\Kn_{11}, \Kn_{12}) = (0.77, 0.782)$ and $(\Kn_{21}, \Kn_{22}) = (0.54, 0.591)$, and the temperature of the boundary condition is $(T_l, T_r) = (223, 323)$.}
    \label{fig:Fourier1}
\end{figure}

\begin{figure}[!htb]
    \centering
      \subfloat[Density, $\rho$ (m$^{-3}$)]
    {\includegraphics[width=0.45\textwidth, height=0.36\textwidth,
      clip]{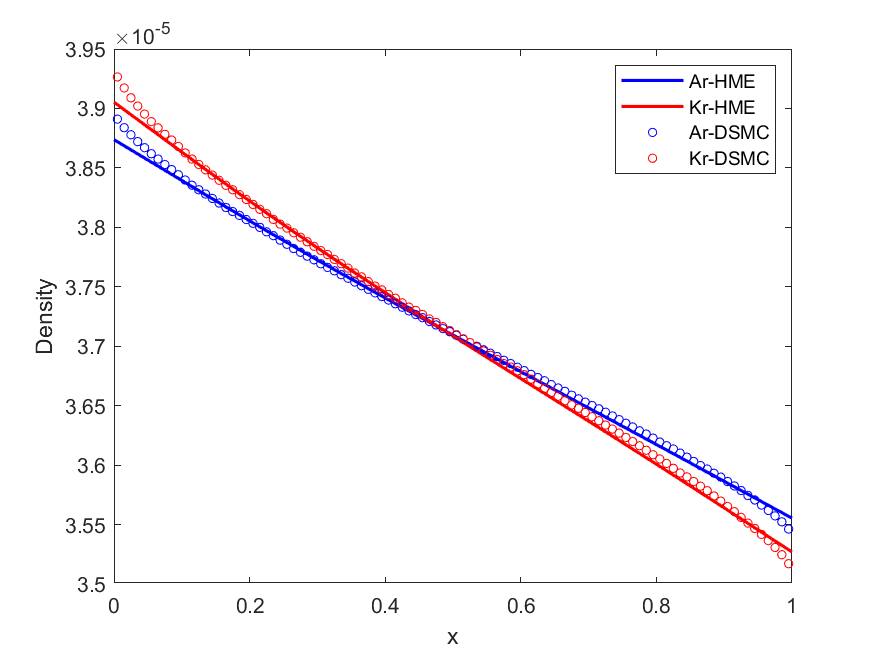}}\hfill
      \subfloat[Temperature, $T$(K)]
    {\includegraphics[width=0.45\textwidth, height=0.36\textwidth,
      clip]{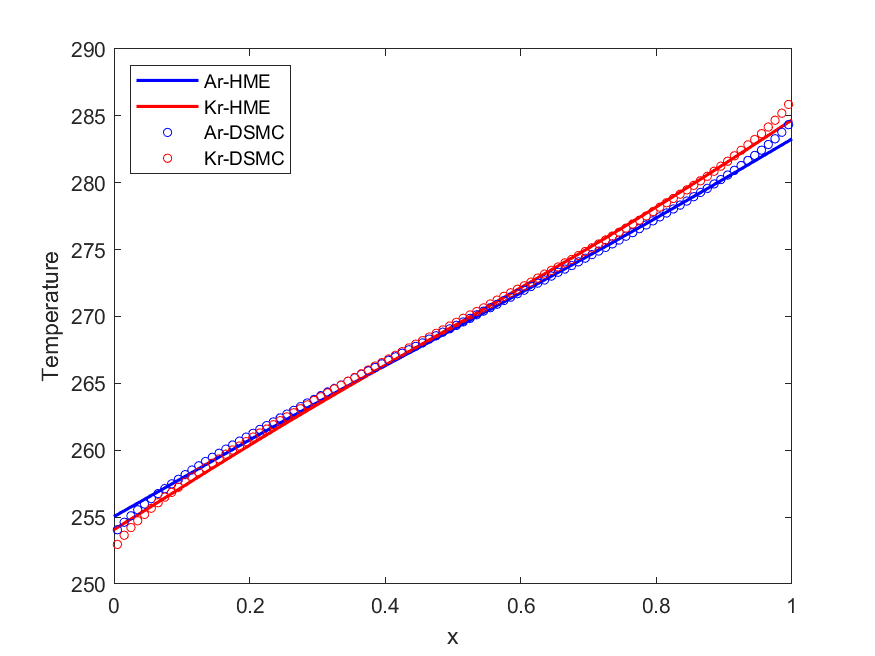}}\\
      \subfloat[Stress tensor, $\sigma_{12}$ (kg$\cdot$ m$^{-1}$ $\cdot$ s$^{-2}$)]
    {\includegraphics[width=0.45\textwidth, height=0.36\textwidth,
      clip]{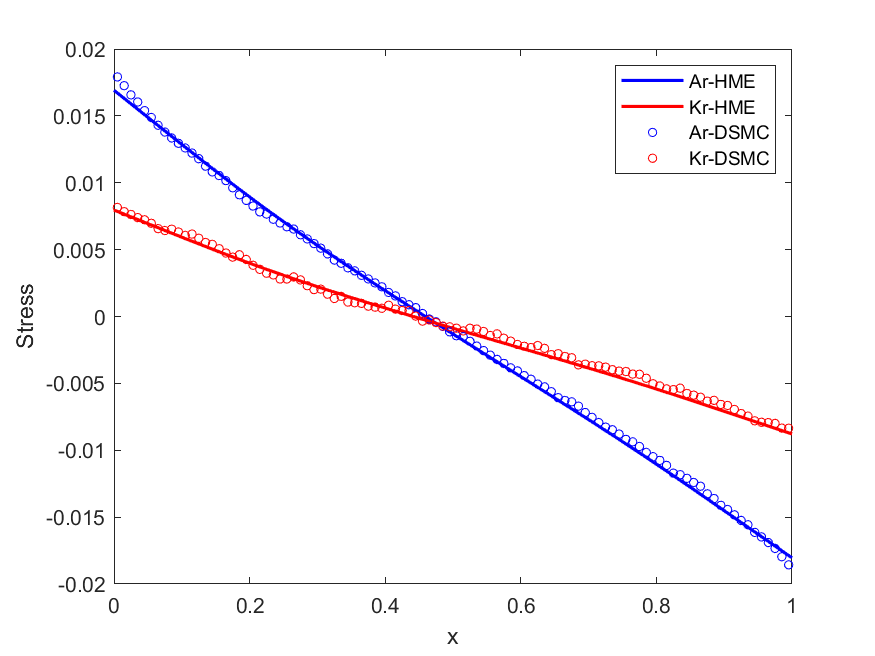}}\hfill
      \subfloat[Heat flux, $q_1$ (kg $\cdot$  s$^{-3}$) ]
    {\includegraphics[width=0.45\textwidth, height=0.36\textwidth,
      clip]{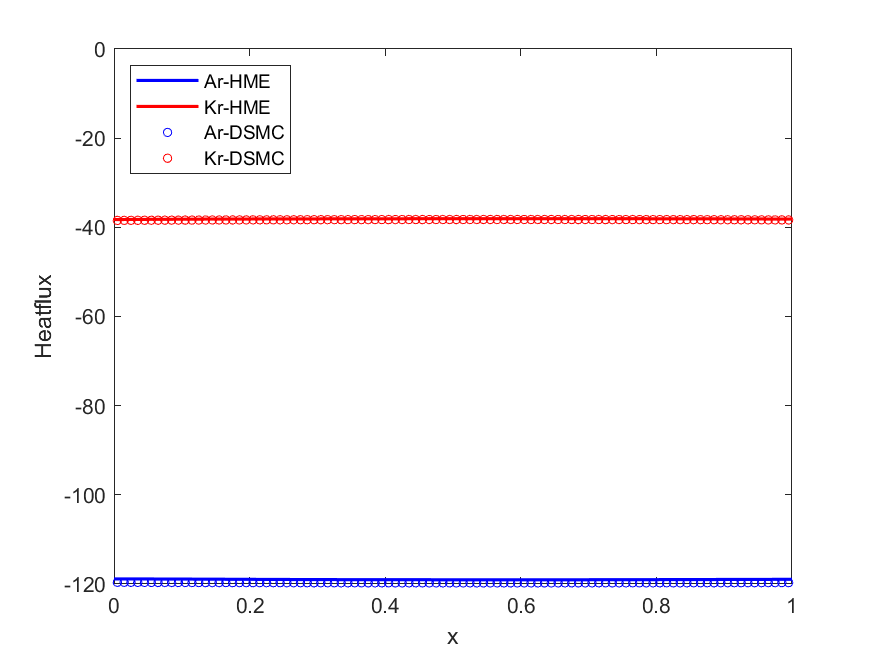}}\hfill
    \caption{Fourier heat transfer in Sec. \ref{sec:fourier}: Numerical results of Fourier heat transfer for case 2, where the Knudsen number is $(\Kn_{11}, \Kn_{12}) = (2.311, 2.346)$ and $(\Kn_{21}, \Kn_{22}) = (1.62, 1.774)$, and the temperature of the boundary condition is $(T_l, T_r) = (223, 323)$. }
    \label{fig:Fourier2}
\end{figure}    
  
\begin{figure}[!htb]
    \centering
      \subfloat[Density, $\rho$ (m$^{-3}$)]
    {\includegraphics[width=0.45\textwidth, height=0.36\textwidth,
      clip]{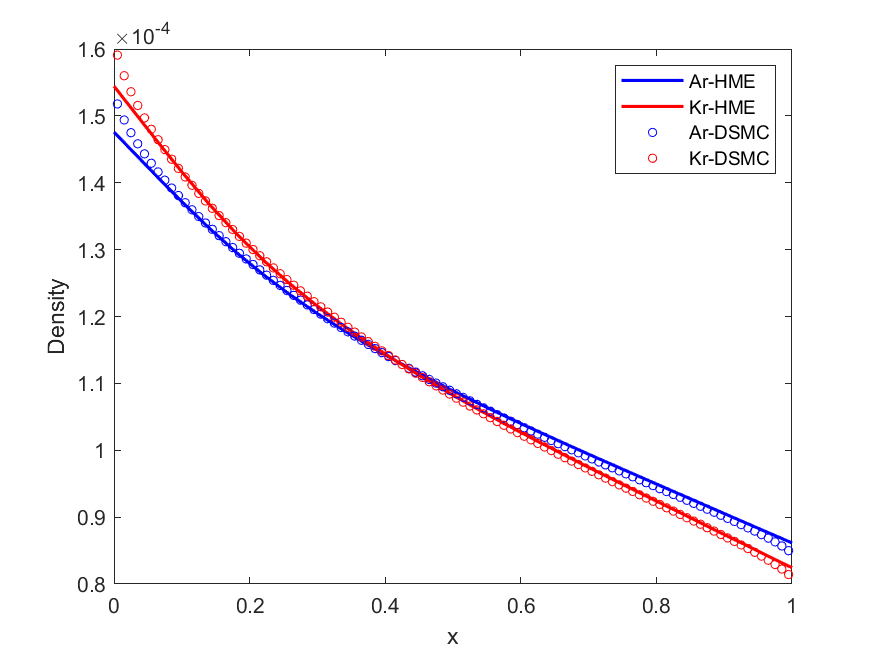}}\hfill
      \subfloat[Temperature, $T$(K)]
    {\includegraphics[width=0.45\textwidth, height=0.36\textwidth,
      clip]{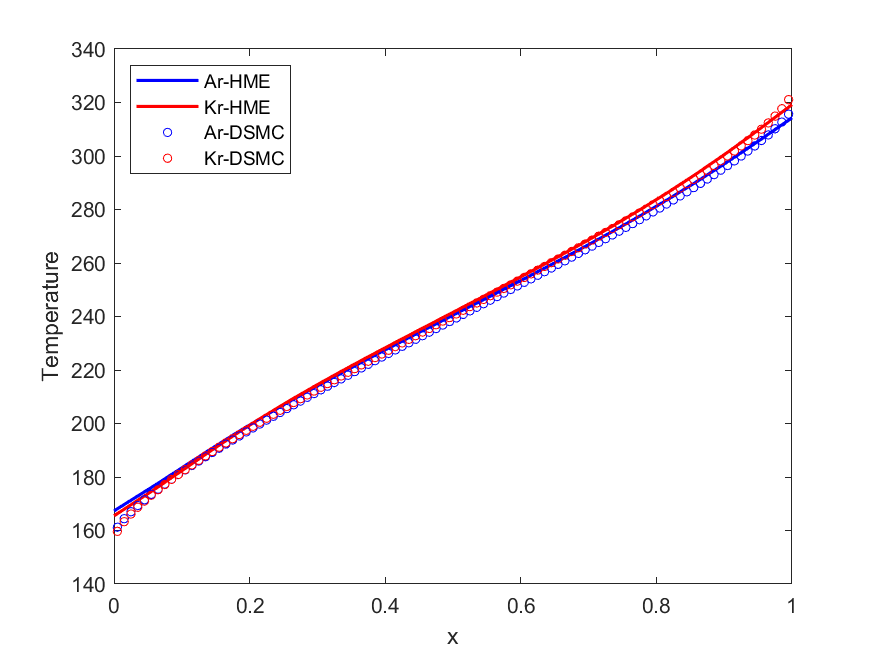}}\\
      \subfloat[Stress tensor, $\sigma_{12}$ (kg$\cdot$ m$^{-1}$ $\cdot$ s$^{-2}$)]
    {\includegraphics[width=0.45\textwidth, height=0.36\textwidth,
      clip]{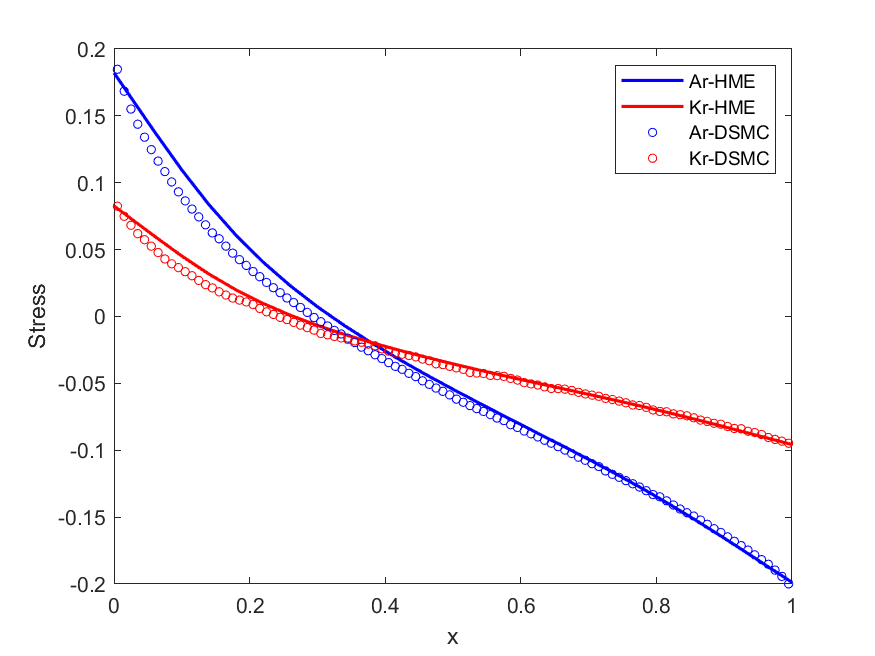}}\hfill
      \subfloat[Heat flux, $q_1$ (kg $\cdot$  s$^{-3}$) ]
    {\includegraphics[width=0.45\textwidth, height=0.36\textwidth,
      clip]{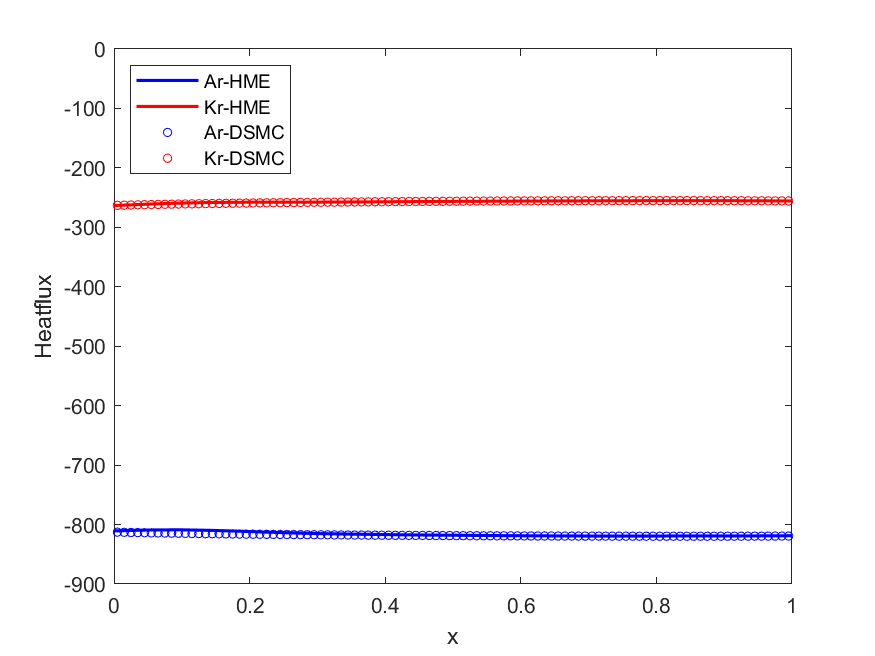}}\hfill
    \caption{Fourier heat transfer in Sec. \ref{sec:fourier}: Numerical results of Fourier heat transfer for case 2, where the Knudsen number is $(\Kn_{11}, \Kn_{12}) = (0.77, 0.782)$ and $(\Kn_{21}, \Kn_{22}) = (0.54, 0.591)$, and the temperature of the boundary condition is $(T_l, T_r) = (109.2, 436.8)$. }
    \label{fig:Fourier3}
\end{figure}

\begin{figure}[!htb]
    \centering
      \subfloat[Density, $\rho$ (m$^{-3}$)]
    {\includegraphics[width=0.45\textwidth, height=0.36\textwidth,
      clip]{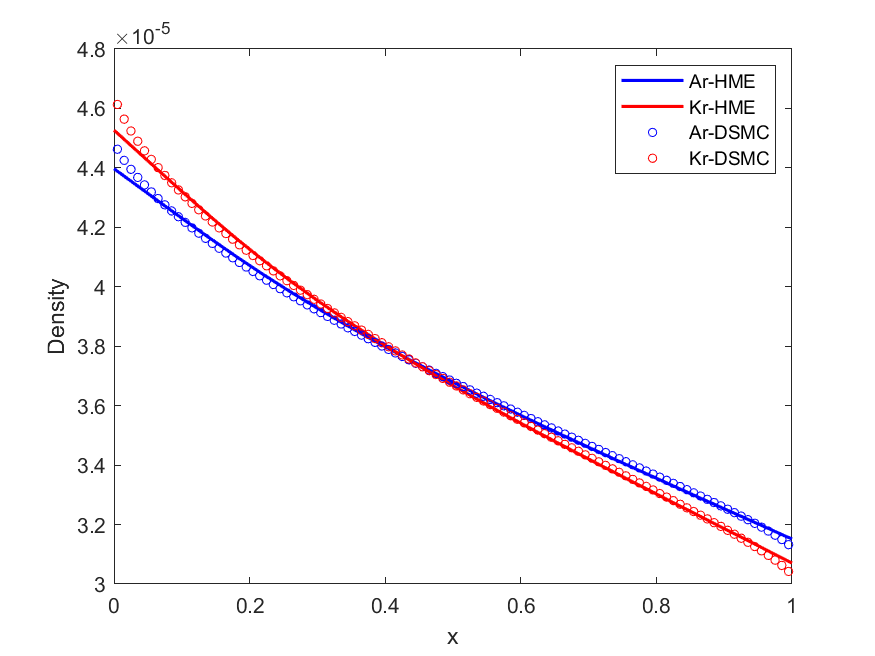}}\hfill
      \subfloat[Temperature, $T$(K)]
    {\includegraphics[width=0.45\textwidth, height=0.36\textwidth,
      clip]{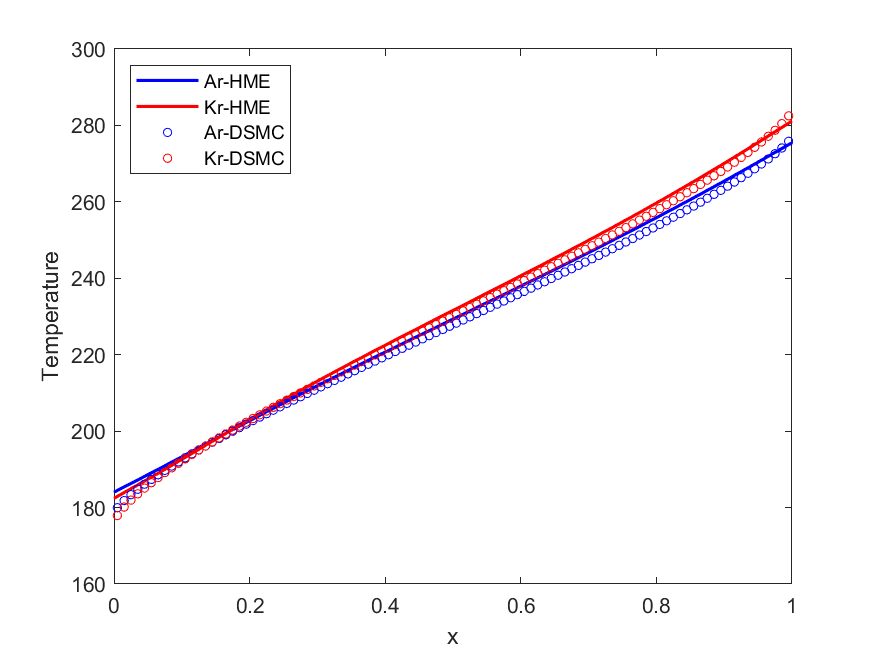}}\\
      \subfloat[Stress tensor, $\sigma_{12}$ (kg$\cdot$ m$^{-1}$ $\cdot$ s$^{-2}$)]
    {\includegraphics[width=0.45\textwidth, height=0.36\textwidth,
      clip]{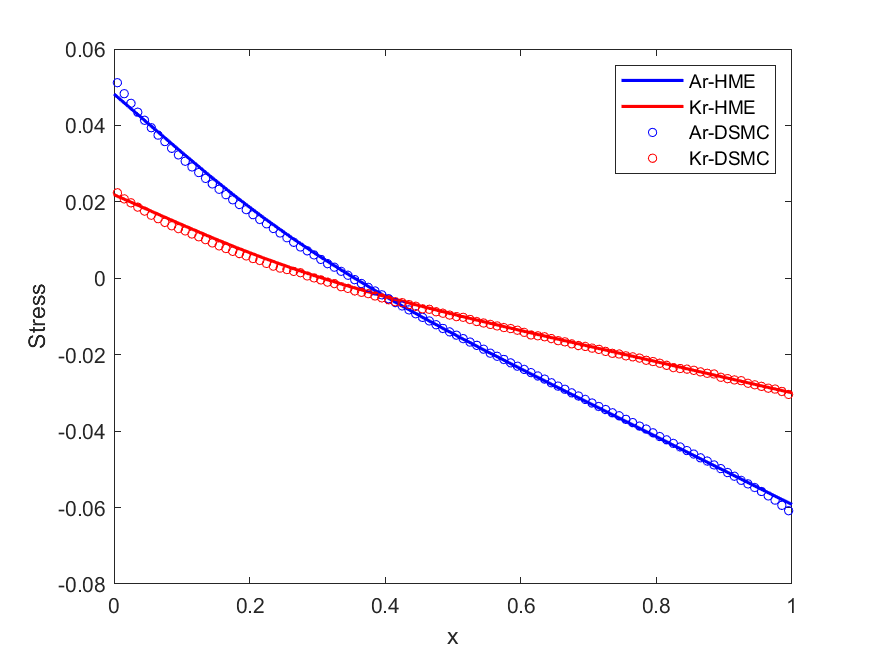}}\hfill
      \subfloat[Heat flux, $q_1$ (kg $\cdot$  s$^{-3}$) ]
    {\includegraphics[width=0.45\textwidth, height=0.36\textwidth,
      clip]{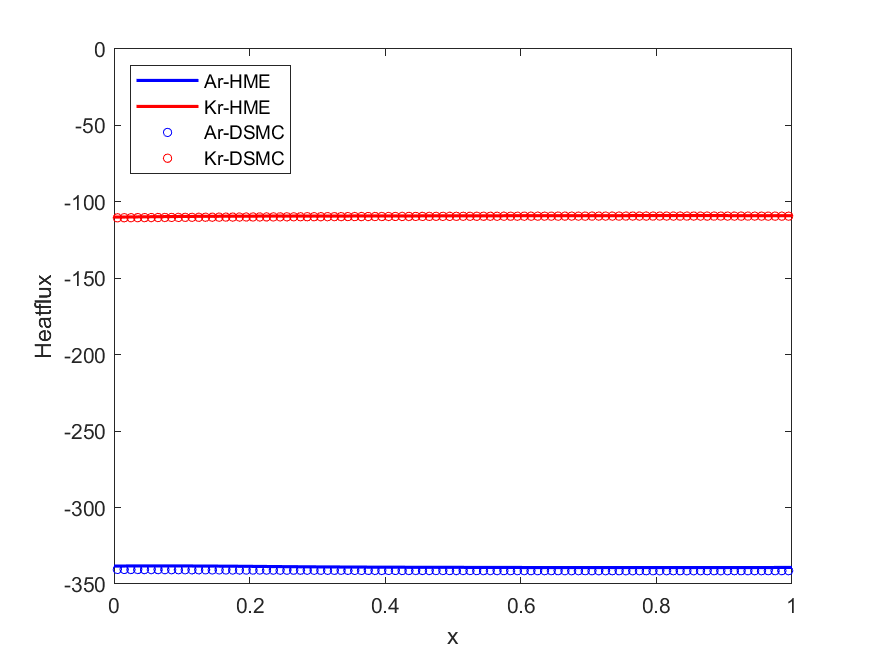}}\hfill
    \caption{
    Fourier heat transfer in Sec. \ref{sec:fourier}: Numerical results of Fourier heat transfer for case 4, where the Knudsen number is $(\Kn_{11}, \Kn_{12}) = (2.311, 2.346)$ and $(\Kn_{21}, \Kn_{22}) = (1.62, 1.774)$, and the temperature of the boundary condition is $(T_l, T_r) = (109.2, 436.8)$. }
    \label{fig:Fourier4}
\end{figure}

For the Fourier heat transfer problem, the density $\rho$, temperature $T$, stress tensor $\sigma_{12}$, and heat flux $q_1$ are studied. In the first and second cases in Tab. \ref{table:fourier}, since that difference between the two boundary conditions is small, which are $(T_l, T _r) = (223, 323)$, we set the order of the total moment number and the length of the quadratic collision term as $(M, M_0) = (40, 10)$. The numerical results are shown in Fig. \ref{fig:Fourier1} and \ref{fig:Fourier2}. In Fig. \ref{fig:Fourier1}, the numerical solutions match well with the reference solutions, and there are some oscillations in the reference solutions in the stress tensor, while those of the numerical solutions are still smooth. In Fig. \ref{fig:Fourier2}, for the temperature $T$, stress tensor $\sigma_{12}$, and heat flux $q_1$ all match well with the reference solution, although there is a little discrepancy for the density on the left boundary. However, the relative error is still quite small, which is about $1\%$. 

In the third and fourth cases in Tab. \ref{table:fourier}, the boundary condition is chosen as $(T_l, T_r) = (109.3, 436.8)$, where the ratio of the temperature is $1 : 4$. In case 3, since the Knudsen number is still small, we set $(M, M_0) = (40, 10)$, the results of which are shown in Fig. \ref{fig:Fourier3}. Most of the numerical solutions and the reference solutions are on top of each other, excepting the little difference in the stress tensor. In case 4, we increase $M$ to $60$ due to the large Knudsen number and keep $M_0 = 10$ due to the limitation of the computational cost. Fig. \ref{fig:Fourier4} shows that even for this large Knudsen number and the ratio of temperature on the boundary condition, we can still catch the behavior of the two species. 

The Fourier heat transfer problem is simulated using the same machine as the Couette flow problem. Here, the final computation time is still $ t = 10$, and the total CPU time and wall time are shown in Tab. \ref{table:fourier_time}. We can find that with the same $M$, the total run times for the first three cases are almost the same, while that of the fourth case is much longer, which may be due to the large Knudsen number and the high ratio of the temperature on the boundary conditions. 

\begin{table}[!ht]
\centering
\def\arraystretch{1.5}
{\footnotesize
\begin{tabular}{lllll}
\hline
                                   & Case 1       & Case 2        & Case 3       & Case 4        \\ \hline
Run-time data:  &  &  &  & \\
Total CPUtime $T_{\rm CPU}$(s): & 16384 & 17599 & 16978 & 43887 \\ 
Elapsed time(Wall time) $T_{\rm Wall}$(s): & 5238.95 & 5839.7 & 5366.48 & 18839.9\\
\hline
\end{tabular}
}
\caption{Fourier heat transfer in Sec. \ref{sec:fourier}: Run-time data for Fourier heat transfer.}
\label{table:fourier_time}
\end{table}


\subsection{2D case: Lid-driven cavity flow}
\label{sec:cavity}
In this section, the two-dimensional lid-driven cavity flow is studied, which is also tested in \cite{ZhichengHu2019, Cai2018, Liu2020}.
In this test, the Ar-Kr mixture is also taken as the working gas, and the HS collision kernel is utilized here in the collision model, the detailed parameters of which are listed in Tab. \ref{table:coll_para}. For the lid-driven cavity flow, the gas is confined in a square cavity with side length $L=10^{-3}m$. All walls are purely diffusive and have temperature $T=273K$. The top lid moves to the right with a constant speed $(50, 0, 0) m/s$, while the other three boundaries are stationary.

In this simulation, a uniform grid with $100 \times 100$ cells is utilized for the spatial discretization with the linear reconstruction adopted. The CFL condition is set as ${\rm CFL} = 0.3$. Two tests with different number densities are carried out, which are corresponding to different Knudsen numbers. The detailed parameters for the initial conditions are listed in Tab. \ref{table:cavity}, where the parameters for the non-dimensionalization are also listed. With \eqref{eq:Kn}, we can obtain the corresponding Knudsen number for the two cases as in Tab. \ref{table:cavity}. The same expansion centers $[\ou_{\rm Ar}, \oT_{\rm Ar}] = [\bz, 1]$ and $[\ou_{\rm Kr}, \oT_{\rm Kr}] = [\bz, 0.477]$  as the Couette flow and Fourier heat transfer problem are adopted here for the convection step. For the collision step, the expansion centers are also decided with local macroscopic variables as in Algorithm \ref{algo:inhomo}. 

\begin{table}[!ht]
\centering
\def\arraystretch{1.5}
{\footnotesize
\begin{tabular}{lll}
\hline
                                        & Case 1         & Case 2         \\ \hline
Initial conditions                      &                &                \\
Temperature $T$(K)                      & 273            & 273            \\
Velocity $\bv$ (m/s)                    & (0, 0, 0)        & (0, 0, 0)        \\
Number density $n_{\rm Ar}$(m$^{-3}$)             & 1.708e22       & 1.708e21       \\
Number density $n_{\rm Kr}$(m$^{-3}$)             & 8.141e21       & 8.141e20       \\ \hline
Characteristic variables                &                &                \\
Mass $m_0$ (kg)                         & 6.63e-27       & 6.63e-27       \\
Number density $n_0$(m$^{-3}$)          & 1.68e21        & 5.6e20         \\
Temperature $T$(K)                      & 273            & 273            \\
Velocity $u_0$(m/s)                     & 238.377        & 238.377        \\
Knudsen number ($Kn_{11}$, $Kn_{12}$)    & (0.1, 0.101)    & (1, 1.014)      \\
Knudsen number ($Kn_{21}$, $Kn_{22}$)      & (0.07, 0.076)   & (0.7, 0.762)    \\ \hline
\end{tabular}
}
\caption{Lid-driven cavity flow in Sec. \ref{sec:cavity}: Running parameters for the cavity flow.}
\label{table:cavity}
\end{table}

In the simulation, the temperature $T$ and the stress tensor $\sigma_{12}$ of the two species are studied. For case 1 in Tab. \ref{table:cavity}, since the Knudsen number is small, we set the total expansion order and the length of the quadratic collision term as $(M, M_0) = (25, 10)$. The numerical results are plotted in Fig. \ref{fig:Cavity1}, where we find that the numerical solution  matches well with the reference solution obtained by the DSMC method. Moreover, small oscillations exist in the reference solution of the temperature, while the numerical solution keeps smooth. For case 2 in Tab. \ref{table:cavity}, we set $(M, M_0)$ as $(M, M_0) = (30, 10)$, and the numerical results are shown in Fig. \ref{fig:Cavity2}. We find that for the case with increased Knudsen number, the numerical results and the reference solution are also almost the same, except that the reference solution has some oscillations and the numerical solutions are all smooth. 

\begin{figure}[!ht]
    \centering
      \subfloat[Temperature of Ar, $T$(K)]
    {\includegraphics[width=0.45\textwidth, height=0.36\textwidth,
      clip]{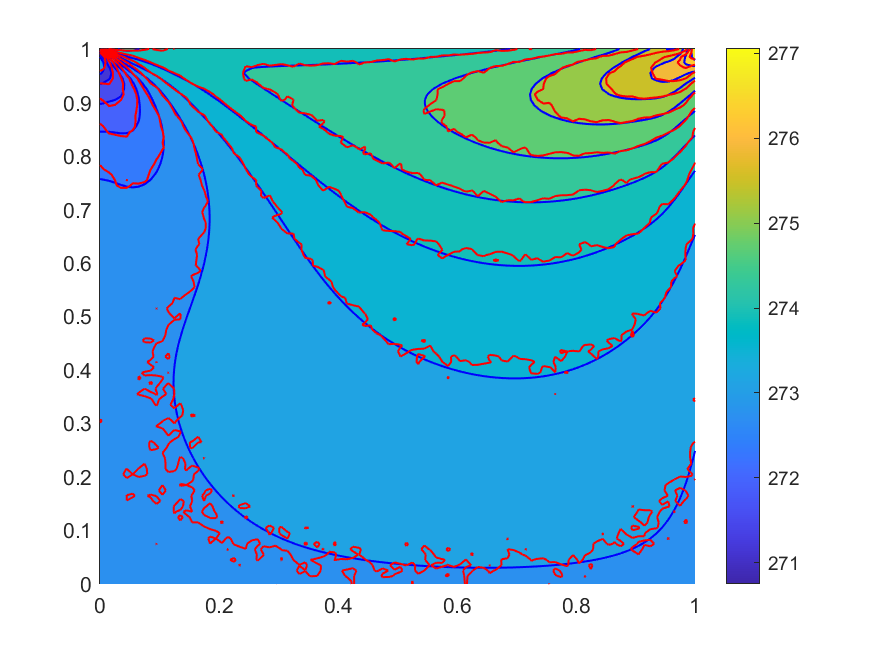}}\hfill
      \subfloat[Stress tensor of Ar, $\sigma_{12}$ (kg$\cdot$ m$^{-1}$ $\cdot$ s$^{-2}$)]
    {\includegraphics[width=0.45\textwidth, height=0.36\textwidth,
      clip]{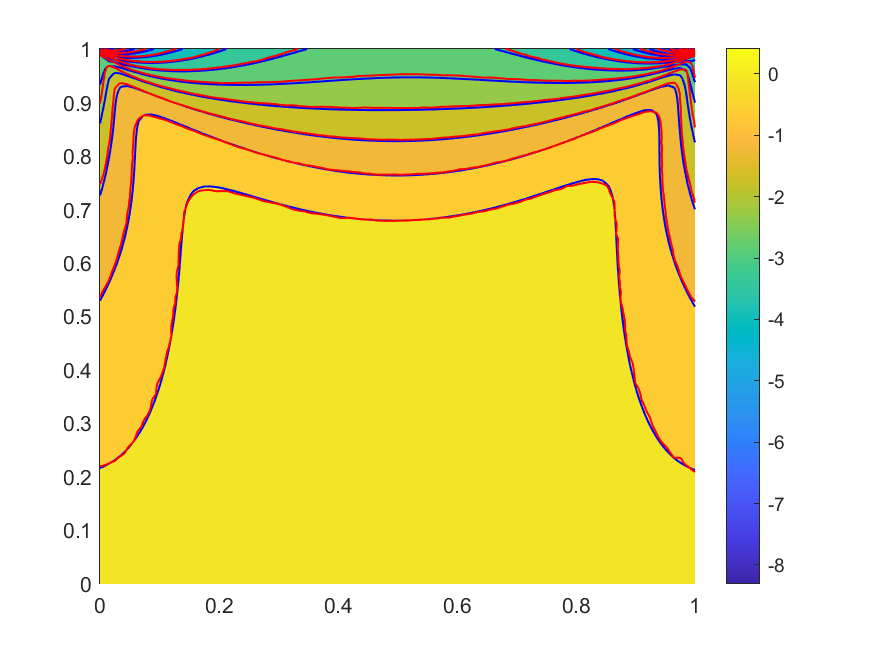}}\\
      \subfloat[Temperature of Kr, $T$(K)]
    {\includegraphics[width=0.45\textwidth, height=0.36\textwidth,
      clip]{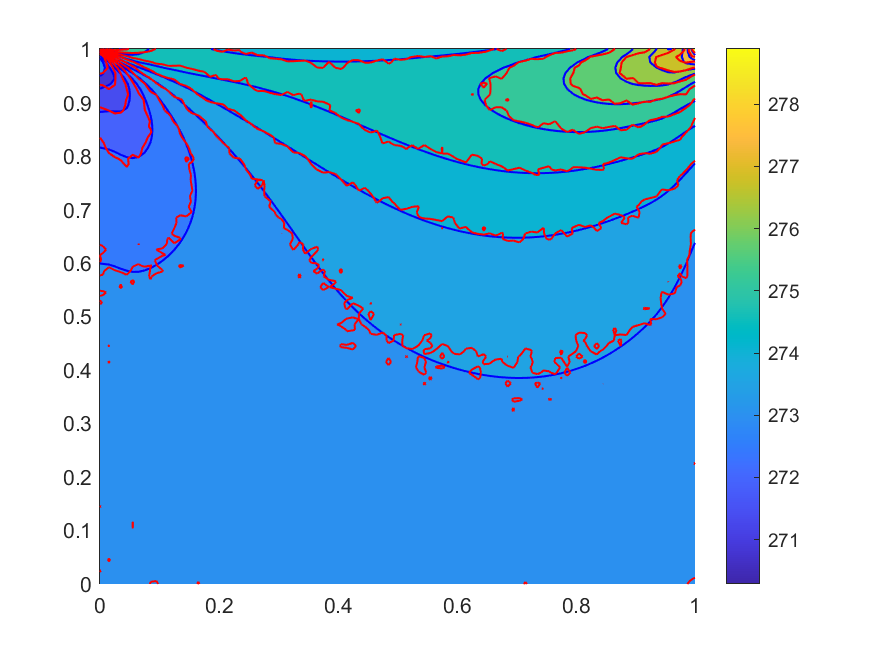}}\hfill
      \subfloat[Stress tensor of Kr, $\sigma_{12}$ (kg$\cdot$ m$^{-1}$ $\cdot$ s$^{-2}$)]
    {\includegraphics[width=0.45\textwidth, height=0.36\textwidth,
      clip]{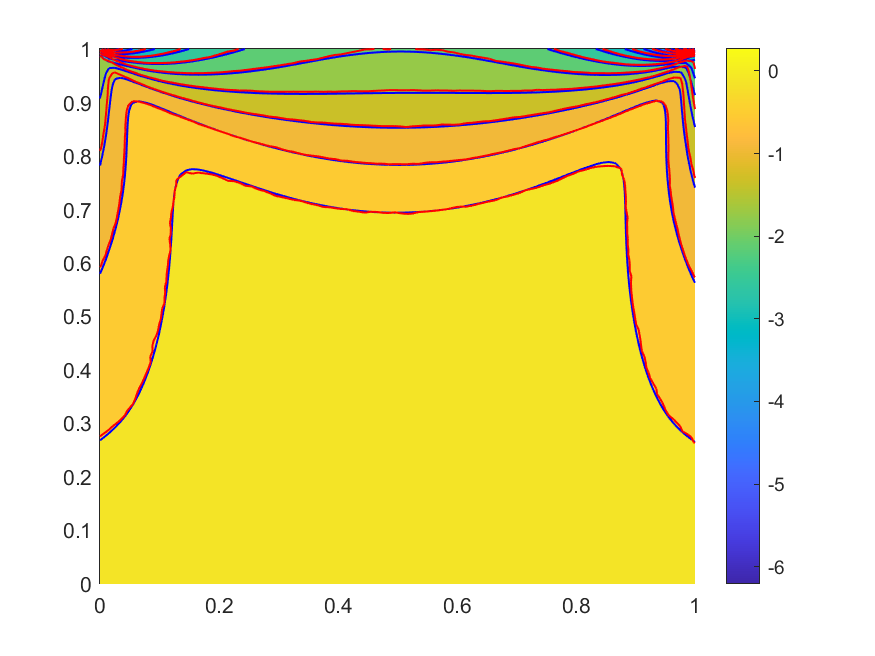}}\hfill
      \caption{Lid-driven cavity flow in Sec. \ref{sec:cavity}: Numerical results of the lid-driven cavity flow for case 1,  where the Knudsen number is $(\Kn_{11}, \Kn_{12}) = (0.1, 0.101)$ and $(\Kn_{21}, \Kn_{22}) = (0.07, 0.076)$.  Blue contours: Numerical solution. Red contours: Reference solution by DSMC.}
    \label{fig:Cavity1}
\end{figure}

\begin{figure}[!ht]
    \centering
      \subfloat[Temperature of Ar, $T$(K)]
    {\includegraphics[width=0.45\textwidth, height=0.36\textwidth,
      clip]{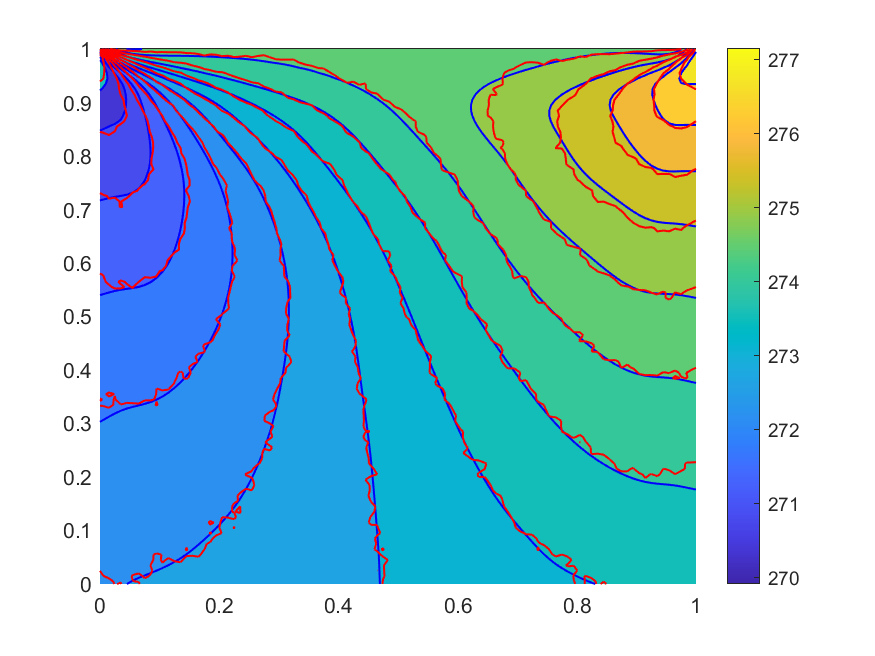}}\hfill
      \subfloat[Stress tensor of Ar, $\sigma_{12}$ (kg$\cdot$ m$^{-1}$ $\cdot$ s$^{-2}$)]
    {\includegraphics[width=0.45\textwidth, height=0.36\textwidth,
      clip]{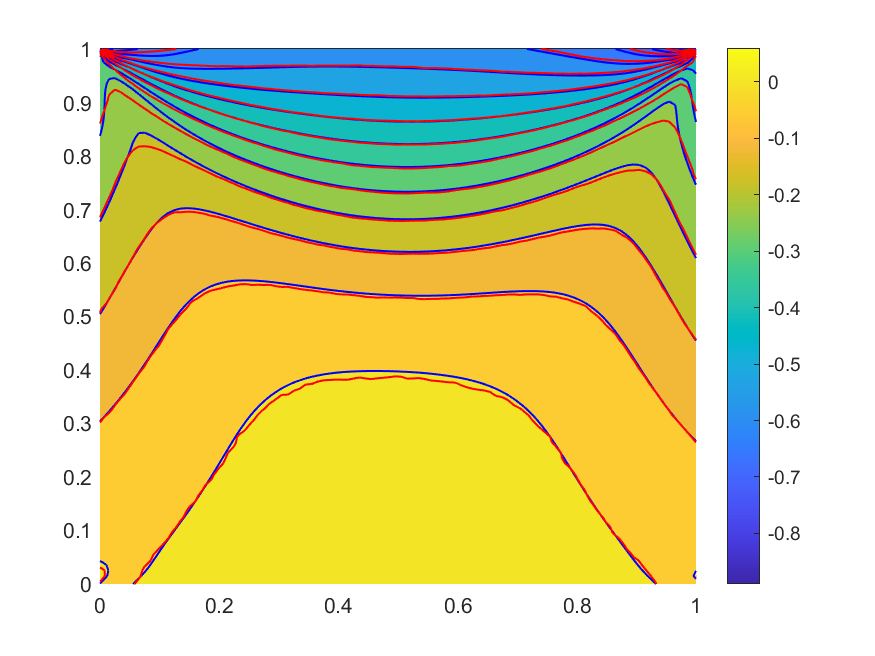}}\\
      \subfloat[Temperature of Kr, $T$(K)]
      {\includegraphics[width=0.45\textwidth, height=0.36\textwidth,
      clip]{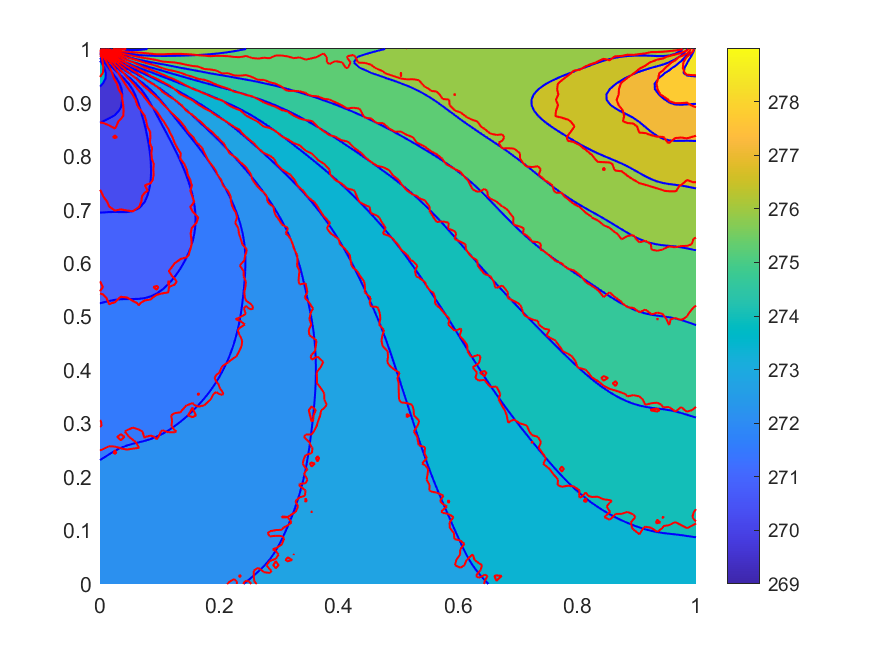}}\hfill
      \subfloat[Stress tensor of Kr, $\sigma_{12}$ (kg$\cdot$ m$^{-1}$ $\cdot$ s$^{-2}$)]
    {\includegraphics[width=0.45\textwidth, height=0.36\textwidth,
      clip]{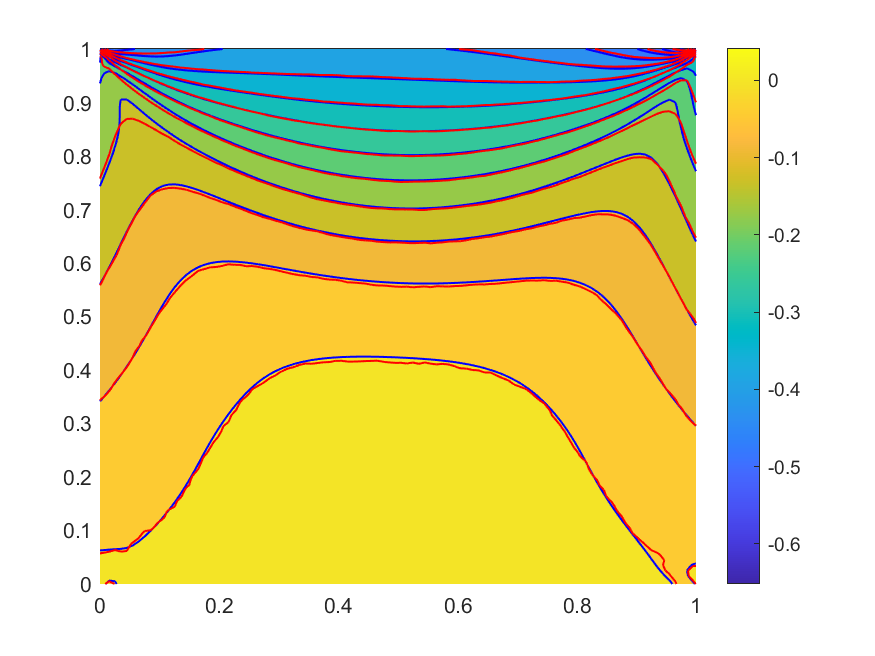}}\hfill
      \caption{Lid-driven cavity flow in Sec. \ref{sec:cavity}: Numerical results of the lid-driven cavity flow for case 2,  where the Knudsen number is $(\Kn_{11}, \Kn_{12}) = (1.0, 1.01)$ and $(\Kn_{21}, \Kn_{22}) = (0.7, 0.76)$.  Blue contours: Numerical solution. Red contours: Reference solution by DSMC. }
    \label{fig:Cavity2}
\end{figure}

\subsection{100-species KW solution}
\label{sec:100KW}
To verify the superiority of the numerical methods proposed in Algorithm \ref{algo:inhomo} in the cost of time and space, we again implement the KW solution test in \ref{subsec:2KW}. The difference is that we will consider the 100-species case, which is much more challenging in the required memory and running time. 
The numerical simulation with such a quantity of species is unreachable by most methods with acceptable consumption of time and space. The general $s$-species Boltzmann equation with constant collision kernel has the same form as in \eqref{eq:KW2}, and it also has the exact solution \cite{krook1977exact}, the form of which is similar to \eqref{eq:soluKW2}. Precisely, the exact solution has the form below 
\begin{equation}
    \label{eq:solu_KW}
    f^{(i)}(t,\bv)=n\UP[i]\left(\frac{m\UP[i]}{2\pi K}\right)^{\frac32}
    \exp\left(-\frac{m\UP[i]|\bv|^2}{2K}\right)
    \left[1-3Q+\frac{m\UP[i]}{K}Q|\bv|^2\right],
\end{equation}
where 
\begin{equation}
    \begin{gathered}
        A=\frac16, \qquad n\UP[i]=1, \qquad  m\UP[i] = i, \qquad  s= 100, \qquad 
        \mu_{ij}=\frac{4m\UP[i]m\UP[j]}{(m\UP[i])^2+(m\UP[j])^2},  \\
        \la_{ij}=\frac{1}{s\mu_{ij}(3-2\mu_{ij})}, \qquad    Q=\frac{1}{\exp(A(t+t_0))-2}, \qquad
        K=\frac{1}{1+2Q}.
    \end{gathered}
\end{equation}
Here $t_0$ is a constant which is large enough to ensure $f\UP[i]$ is positive for each $i$, and we set $t_0 = 20$. The same expansion center $[\ou\UP[i], \oT\UP[i]] = [\bz,1/{m\UP[i]}]$ is utilized here for each species as in Sec. \ref{subsec:2KW}. The corresponding coefficients can be computed as
\begin{equation}
    \label{eq:coefKW}
    \begin{split}
        f_{\al}\UP[i](t)
        =\left\{
            \begin{array}{ll}
                 \frac{(1-\frac{|\al|}2)(-2)^{\frac{|\al|}2}}{\al!}\prod_{j=1}^3\Big[(\al\UP[j]-1)!!\Big]\exp(-\frac{|\al|}2A(t+t_0)), &  \al_1,\al_2,\al_3\; \text{all even},\\
                 0, & \text{otherwise}.
            \end{array}
        \right.
    \end{split}
\end{equation}

In the simulation, the fourth-order Runge-Kutta scheme with the time step length $\Delta t = 0.01$ is utilized. The total expansion order $M$ and the length of the quadratic collision $M_0$ is set as $(M, M_0) = (20, 10)$. To show the efficiency of this new method, instead of showing the numerical solutions, 
we evaluate the relative $L^2$ error and weighted $L^2$ error 
\begin{equation}
    \label{eq:error}
    \begin{split}
    E\UP[i]&=\frac{\left[\int_{\mR^3}|f\UP[i]_{\text{num}}(\bv)-f\UP[i]_{\text{exact}}(\bv)|^2\rd\bv\right]^{\frac12}}
    {\left[\int_{\mR^3}|f\UP[i]_{\text{exact}}(\bv)|^2\rd\bv\right]^{\frac12}}, \\
    E\UP[i]_w&=\frac{\left[\int_{\mR^3}|f\UP[i]_{\text{num}}(\bv)-f\UP[i]_{\text{exact}}(\bv)|^2(\mM\UP[i](\bv))^{-1}\rd\bv\right]^{\frac12}}
    {\left[\int_{\mR^3}|f\UP[i]_{\text{exact}}(\bv)|^2(\mM\UP[i](\bv))^{-1}\rd\bv\right]^{\frac12}}
    \end{split}
\end{equation}
for each species. The integral will be evaluated with the 10-point Gauss-Hermite quadrature in each dimension. Two times $t = 1$ and $5$ are recorded, the detailed of which is shown in Tab. \ref{table:error}. We can see that the two errors are all quite small, which are almost at the order $10^{-8}$. The results clearly show the accuracy of this method. Moreover, with time evolution, the distribution function is approximating the Maxwellian, and that is why the error reduces with time increasing. 

\begin{table}[!htb]
  \centering
    \def\arraystretch{1.5}
    {\footnotesize
  \begin{tabular}{lll}
\hline
    $E\UP[i]$ & $t =1$ & $t = 5$ \\
\hline
    Average of 100 species: & $1.03 \times 10^{-8}$ & $1.12\times 10^{-9}$ \\
    Maximum: & $8.22\times 10^{-8}$ & $2.57\times 10^{-9}$\\
\hline
    $E\UP[i]_w$ & $t = 1$ & $t = 5$ \\
\hline 
    Average of 100 species: & $3.90 \times 10^{-8}$ & $3.58\times 10^{-9}$ \\
    Maximum: & $3.35\times 10^{-7}$ & $6.95\times10^{-9}$ \\
\hline
  \end{tabular}
  }
  \caption{100-species KW solution in Sec. \ref{sec:100KW}: Error estimations of the distribution function at time $t= 1$ and $5$.}\label{table:error}
\end{table}

The computational cost including the total computational time and the total memory used is shown in Tab. \ref{table:time_KW}. The simulation is run on the CPU model Intel Xeon E5-2697A V4 @ 2.6GHz with 8 threads. Tab. \ref{table:time_KW} shows that the computing time for each collision term is only $5.26 \times 10^{-4}$s, which is quite short for such a problem with $100$ species. Moreover, the total memory to store the expansion coefficients $\Aalk$,
which are represented in the double-precision floating-point format, is $18.85$GB, which is also quite easy to accomplish for computers nowadays. This is due to the special algorithm to calculate $\Aalk$, which is also the unique point in this new method. For Maxwell molecules (i.e., constant collision kernel $B_{ij}$), it has been proved in \cite{Approximation2019} that $\ga_{\ka}^{j}(1,1)$ can be nonzero only when $|\ka|=|j|$. Combining \eqref{eq:theorem_A} and \eqref{eq:tran_gamma},  we know that $\Aalk$ is nonzero only when $|\al|=|\la|+|\ka|$. Therefore, we only need to store the coefficients of $|\al|=|\la|+|\ka|$. The memory for each $A\alk(r)$ can be reduced to $\mO(M_0^8)$. Besides, since the set of expansion coefficients $\Aalk$ is only decided by the mass ratio $r=m\UP[j]/m\UP[i]$, 
we only need to store one group of coefficients $\Aalk$ for duplicate $r$, which will further reduce the storage cost. Furthermore, the new collision model combined by the quadratic collision term and the BGK collision term will reduce the computational cost and greatly improve the efficiency of this new method. All of these special properties make the simulation of this 100-species KW solution problem possible, while similar simulations are never seen before.

\begin{table}[!ht]
  \centering
    \def\arraystretch{1.5}
    {\footnotesize
  \begin{tabular}{lll}
    \hline
     &  $ t= 1$ &  $ t = 5$ \\  \hline 
    Total CPU time(s): & 2102 & 10642\\
    Collision terms computed: & $4\times10^6$ & $2\times10^7$ \\
    CPU time per collision term(s): & $5.26\times10^{-4}$ & $5.32\times10^{-4}$ \\
    Elapsed time(Wall time)(s): & 302.02 & 1759.32 \\
    Total memory of $A\alk$ (Gigabytes): & 18.85 & 18.85\\ 
\hline
  \end{tabular}
  }
  \caption{100-species KW solution in Sec. \ref{sec:100KW}:
  Running time \& memory at time $t= 1$ and $5$. }\label{table:time_KW}
\end{table}

\clearpage
\section{Conclusion}
\label{sec:conclusion}
In this paper, we have developed a numerical scheme of multi-species Boltzmann equation based on the Hermite spectral method. A new collision model is built by combing the quadratic collision term and the BGK collision model to balance the computational cost and the accuracy. The expansion coefficients of the quadratic collision model are almost computed with the properties of the Hermite basis functions. Several numerical examples are implemented to validate this new numerical method. Even for problems with 100-species, this new method could capture the behavior of the particles well with a low computational cost. 

Such numerical results make this new numerical method promising when applied to high-dimensional and more complicated problems. Moreover, the GPU may be adopted to further speed up and the research on the quantum Boltzmann equations are ongoing.


\section*{Acknowledgements}
We thank  Prof. Zhenning Cai from NUS for his valuable suggestions. The work of Yanli Wang is partially supported by the National Natural Science Foundation of China (Grant No. 12171026, U1930402 and 12031013).
\section{Appendix}
\label{sec:app}
\subsection{Proof of Lemma \ref{theorem:step1}}
\label{app:proof}
Now we provide the proof of Lemma \ref{theorem:step1}. For convenience, we will omit the superscript $[\bz,\bT]$ of Hermite polynomials in the proof.
\begin{proof}[proof of Lemma \ref{lemma:tran-Hermite}] 

First, it is easy to verify that $|\bv|^2+|\bw|^2=(1+r)|\bh|^2+
\frac{r}{1+r}|\bg|^2$, $\rd\bv \rd\bw$=$\sr \rd\bg \rd\bh$ and
\begin{equation}
    \label{eq:pd}
    \begin{split}
    \pd{}{\bv}=\frac{1}{1+r}\pd{}{\bh}+\pd{}{\bg}, \qquad 
    \pd{}{\bw}=\frac{\sr}{1+r}\pd{}{\bh}-\frac{1}{\sr}\pd{}{\bg}.
    \end{split}
\end{equation}
By the Leibniz rule, we have the following relation:
\begin{equation}
    \label{rela:deri}
    \pd{^{l_d+k_d}}{v_d^{l_d}w_d^{k_d}}=
    \sum_{i_d=0}^{l_d}\sum_{j_d=0}^{k_d}C_{l_d}^{i_d}C_{k_d}^{j_d}
    \frac{(-1)^{k_d-j_d}r^{j_d-\frac{k_d}{2}}}{(1+r)^{i_d+j_d}}
    \pd{^{l_d}}{h_d^{i_d+j_d}}\pd{^{k_d}}{g_d^{i'_d+j'_d}}, \quad d=1,2,3.
\end{equation}
where $i'_d=l_d-i_d,j'_d=k_d-j_d$. Denote
{\small 
\begin{equation}
    \label{eq:def:I}
    \begin{split}
    I_{\la',\ka'}^{\la,\ka}(r) \triangleq 
    \int_{\mR^3}\int_{\mR^3}&\Hl(\bv)\Hk(\bw)
    H_{\la'}(\sqrt{1+r}\bh)H_{\ka'}\left(\sqrt{\frac{r}{1+r}}\bg\right) \mM(\sqrt{1+r}\bh)\mM\left(\sqrt{\frac{r}{1+r}}\bg\right)\rd\bg \rd\bh.
    \end{split}
\end{equation}
}
Based on the orthogonality of Hermite polynomials, we only need to 
prove that
\begin{equation}
    \label{eq:valueI}
    I_{\la',\ka'}^{\la,\ka}(r)=\left\{
    \begin{array}{ll}
    \frac{1}{r^{\frac32}}\la'!\ka'!\ca1(r)\ca2(r)\ca3(r), &\text{if } 
    \la+\ka=\la'+\ka',\\
    0, & \text{otherwise}.
    \end{array}
    \right.
\end{equation}
With $|\bv|^2+|\bw|^2=(1+r)|\bh|^2+\frac{r}{1+r}|\bg|^2$, we can 
simplify \eqref{eq:def:I} as
{\small
\begin{equation}
    \label{eq:deri-I}
    \begin{split}
    I_{\la',\ka'}^{\la,\ka}(r)=&
    \int_{\mR^3}\int_{\mR^3}\Hl(\bv)\Hk(\bw)
    H_{\la'}(\sqrt{1+r}\bh)H_{\ka'}\left(\sqrt{\frac{r}{1+r}}\bg\right)
    \mM(\bv)\mM(\bw)\rd\bg \rd\bh \\
    =&{\oT^{\frac{|\la|+|\ka|}{2}}}
    \int_{\mR^3}\int_{\mR^3}
    (-1)^{|\la|}\pd{^{|\la|}}{v_1^{l_1}v_2^{l_2}v_3^{l_3}}\mM(\bv)
    (-1)^{|\ka|}\pd{^{|\ka|}}{w_1^{k_1}w_2^{k_2}w_3^{k_3}}\mM(\bw) \\
    &H_{\la'}(\sqrt{1+r}\bh)H_{\ka'}\left(\sqrt{\frac{r}{1+r}}\bg\right)
    \rd\bg \rd\bh \\
    =&{\oT^{\frac{|\la|+|\ka|}{2}}}
    \int_{\mR^3}\int_{\mR^3}\mM(\sqrt{1+r}\bh)\mM\left(\sqrt{\frac{r}{1+r}}\bg\right)\\
    & \prod_{d=1}^3\left[\sum_{i_d=0}^{l_d}\sum_{j_d=0}^{k_d}C_{l_d}^{i_d}
    C_{k_d}^{j_d}
    \frac{(-1)^{k_d-j_d}r^{j_d-\frac{k_d}{2}}}{(1+r)^{i_d+j_d}}
    \pd{^{l_d}}{h_d^{i_d+j_d}}\pd{^{k_d}}{g_d^{i'_d+j'_d}}\right]  H_{\la'}(\sqrt{1+r}\bh)H_{\ka'}\left(\sqrt{\frac{r}{1+r}}\bg\right)
    \rd\bg \rd\bh,
    \end{split}
\end{equation}
}where the second equality uses the definition \eqref{eq:Hermite} and the 
third equality is obtained using integration by parts.
From the differentiation relation \eqref{eq:diff_Her}
\begin{equation}
    \label{eq:diff-rela}
    \pd{}{v_d}H_{\la}=\left\{
    \begin{array}{ll}
    0, &\text{if } l_d=0, \\
    \frac{l_d}{\sqrt{\oT}}H_{\la-e_d}, &\text{if } l_d>0,
    \end{array}
    \right.
\end{equation}
and the orthogonality \eqref{eq:orth}, it holds that (\ref{eq:deri-I}) is nonzero only when $i_d+j_d=l'_d$, 
$i'_d+j'_d=k'_d$, $d=1,2,3$,
which implies $\la+\ka=\la'+\ka'$. When $\la+\ka=\la'+\ka'$ holds, we can apply \eqref{eq:diff-rela} to
\eqref{eq:deri-I} and get
\begin{equation}
    \label{eq:deri2-I}
    \begin{split}
    \frac{I_{\la',\ka'}^{\la,\ka}(r)}{\la'!\ka'!}&=r^{-\frac32}
    \prod_{d=1}^3\left[\sum_{i_d=0}^{l_d}\sum_{j_d=0,l'_d=i_d+j_d}^{k_d}
    C_{l_d}^{i_d}C_{k_d}^{j_d}
    \frac{(-1)^{k_d-j_d}r^{j_d+\frac{k'_d-k_d}{2}}}{(1+r)^{\frac{l'_d+k'_d}
    {2}}}\right] =\frac{1}{r^{\frac32}}\ca1(r)\ca2(r)\ca3(r).
    \end{split}
\end{equation}
Thus \eqref{eq:valueI} holds, which completes the proof of the lemma.
\end{proof}

\subsection{The algorithm of $\gamma_{\ka}^{\bj}(1,1)$}
\label{app:gamma}
In this section, we will briefly introduce the algorithm to obtain $\gamma_{\ka}^{\bj}(1,1)$, and readers can be referred to  \cite[Theorem 2]{Approximation2019} for the detailed algorithm. As is shown in \cite{Approximation2019}, it holds that 
\begin{equation}
    \label{eq:algo_gamma}
    \gamma_{\ka}^{\bj}(1,1)=\sum_{\bm\in\bbN^3, \bm \pq \frac{\ka}{2}}
    \sum_{\bn\in\bbN^3, \bn \pq \frac{\bj}{2}}(2|\ka|-4|\bm|+1)D_{\bm}^{\ka}D_{\bn}^{\bj}
    S_{\ka-2\bm}^{\bj-2\bn}K_{\bm\bn}^{\ka\bj},
\end{equation}
where
\begin{equation}
    \label{eq:def_D}
    D_{\bm}^{\ka}=\frac{(-1)^{|\bm|}4\pi |\bm|!}{(2(\ka-\bm)+1)!!}
    \frac{\ka!}{\bm!},
\end{equation}
and $S_{\ka}^{\la}$ is the coefficient of $v_1^{\ka_1}v_2^{\ka_2}v_3^{\ka_3}w_1^{\la_1}w_2^{\la_2}w_3^{\la_3}$ in the polynomial
\begin{equation}
    \label{eq:def_R}
    R_{|\ka|}(\bv,\bw):=(|\bv||\bw|)^{|\ka|})P_{|\ka|}
    \left(\frac{\bv}{|\bv|}\cdot \frac{\bw}{|\bw|}\right),
\end{equation}
and
\begin{equation}
    \label{eq:def_K}
    \begin{split}
        K_{\bm\bn}^{\ka\bj}=&\int_0^{\infty}\int_{0}^{\pi}
    L_{|\bm|}^{\Big(|\ka|-2|\bm|+\frac12\Big)}\left(\frac{r^2}{4}\right)
    L_{|\bn|}^{\Big(|\la|-2|\bn|+\frac12\Big)}\left(\frac{r^2}{4}\right) \\
    &\times \left(\frac{r}{\sqrt2}\right)^{|\ka|+|\la|-2(|\bm|+|\bn|)+2}
    B_{ij}(r,\varsigma)\left[P_{|\ka|-2|\bm|}(\cos\chi)-1\right]
    \exp\left(-\frac{r^2}{4}\right)\rd\chi \rd r, 
    \end{split}
\end{equation}
where $L_n^{(\al)}(x)$ are the Laguerre polynomials and $P_n(x)$ are the 
Legendre polynomials. Moreover, $B_{ij}(r,\varsigma)$ is in fact decided by $(r,\chi)$ based on \eqref{eq:B}.
As for $S_{\ka}^{\la}$, we can obtain $R_{k}(\bv,\bw)$ with the 
recursion formula:
\begin{equation}
    \label{eq:recur_R}
    \begin{split}
        &R_0(\bv,\bw)=1,\quad R_1(\bv,\bw)=\bv\cdot \bw, \\
        &R_{k+1}(\bv,\bw)=\frac{2k+1}{k+1}(\bv\cdot\bw)R_k(\bv,\bw)-
        \frac{k}{k+1}(|\bv||\bw|)^2R_{k-1}(\bv,\bw),
    \end{split}
\end{equation}
then we can derive the expression of $S_{\ka}^{\la}$  based on \eqref{eq:recur_R}. 


\subsection{Properties of Hermite polynomials}
\label{app:Her}
For the Hermite polynomials \eqref{eq:Hermite}, several important properties are listed below
\begin{property}
(Orthogonality)
\end{property}
\begin{equation}
    \label{eq:orth}
    \int_{\mR^3}H\aut(\bv)H\but(\bv)\mMut(\bv)\rd\bv=\al_1!\al_2!\al_3!
    \delta_{\al,\be}.
\end{equation}
\begin{property}
(Transitivity)
\end{property}
\begin{equation}
    \label{eq:tran}
    H\aut(\bv)=H_{\al}^{0,\zeta}\left(\sqrt{\frac{{\zeta}}{{\;\bT\;}}}(\bv-\ou)\right).
\end{equation}
\begin{property}
\label{property:recur}
(Recursion)
\end{property}
\begin{equation}
    \label{eq:recur}
    \begin{split}
    &H^{\ou,\oT}_{\al+e_d}(\bv)=\frac{v_d-u_d}{\sqrt{\oT}}H^{\ou,\oT}
    _{\al}(\bv)-\al_dH^{\ou,\oT}_{\al-e_d}(\bv), \\
    \text{and} \quad& v_dH^{\ou,\oT}_{\al}=\sqrt{\oT}H^{\ou,\oT}_
    {\al+e_d}+
    u_dH^{\ou,\oT}_{\al}+\al_d\sqrt{\oT}H^{\ou,\oT}_{\al-e_d}.
    \end{split}
\end{equation}
\begin{property}(Differential of Hermite polynomial)
\label{property:deri}
\begin{equation}
    \label{eq:diff_Her}
    \frac{\pa}{\pa v_d}H\aut(\bv)=\frac{\al_d}{\sqrt{\oT}}H_{\al-e_d}^{\ou,\bT}
    (\bv).
\end{equation}
\end{property}
The property of transitivity can be directly derived from the definition \eqref{eq:Hermite}, and the proof of the other properties can be found in \cite{Abramowitz1964}.

\subsection{Solver configurations for spatially inhomogeneous cases}
\label{app:col_param}
In the numerical simulation, SPARTA \cite{Sparta} is employed to carry out the DSMC simulation as the reference. In spatially inhomogeneous cases, the mixture of Argon and Krypton will be taken as the working gas. Three collision models, the VSS, VHS, and HS collision kernels, are utilized in the simulation. The parameters of these three collision models between Argon and Krypton 
are shown in Tab. \ref{table:coll_para}, where the reference diameter is derived with  \cite[Eq. (4.62)]{Bird}.
\begin{table}[h!]
  \centering
  \centering
    \def\arraystretch{1.5}
    {\footnotesize
  \begin{tabular}{llll}
    \hline
    Collision model for Ar-Kr mixture                  & VSS           
    & VHS           & HS     \\
    \hline
Molecular mass: $m_1$ ($\times 10^{-26}$kg)          & 6.63          & 6.63 
         & 6.63   \\
Molecular mass: $m_2$ ($\times 10^{-26}$kg)          & 13.91         & 13.91
     & 13.91  \\
Ref. viscosity: $\mu_{ref,1}$ ($\times 10^{-5}$Pa s) & 2.117         & 2.117
       & 2.117  \\
Ref. viscosity: $\mu_{ref,2}$ ($\times 10^{-5}$Pa s) & 2.328         & 2.328
      & 2.328  \\
Viscosity index: ($\om_{11}, \om_{12}$)              & (0.81, 0.805) 
& (0.81, 0.805) & (0.5, 0.5) \\
Viscosity index: ($\om_{21}, \om_{22}$)              & (0.805, 0.8) 
 & (0.805, 0.8)  & (0.5, 0.5) \\
Scattering parameter: ($\al_{11}, \al_{12}$)         & (1.4, 1.36)   
& (1, 1)        & (1, 1) \\
Scattering parameter: ($\al_{21}, \al_{22}$)         & (1.36, 1.32)  
& (1, 1)        & (1, 1) \\
Ref. diameter: ($d_{ref,11}, d_{ref,12}$)($\times 10^{-10}m$) 
& (4.11, 4.405) & (4.17, 4.465) & (3.63, 3.895) \\
Ref. diameter: ($d_{ref,21}, d_{ref,22}$)($\times 10^{-10}m$) 
& (4.405, 4.7)  & (4.465, 4.76) & (3.895, 4.16) \\
Ref. temperature: ($T_{ref,11}, T_{ref,12}$)(K)      
& 273           & 273           & 273    \\
Ref. temperature: ($T_{ref,21}, T_{ref,22}$)(K)      
& 273           & 273           & 273    \\
\hline
  \end{tabular}
  }
  \caption{Parameters for the collision models.}\label{table:coll_para}
\end{table}


\addcontentsline{toc}{section}{References}
\bibliographystyle{plain}
\bibliography{article}

\begin{thebibliography}{10}

\bibitem{Abramowitz1964}
M.~Abramowitz and I.~Stegun.
\newblock Handbook of mathematical functions with formulas, graphs, and
  mathematical tables.
\newblock {\em New York: Dover}, 1964.

\bibitem{Bird}
G.~Bird.
\newblock {\em Molecular Gas Dynamics and the Direct Simulation of Gas Flows}.
\newblock Oxford: Clarendon Press, 1994.

\bibitem{Fan2013}
Z.~Cai, Y.~Fan, and R.~Li.
\newblock Globally hyperbolic regularization of {G}rad's moment system in one
  dimensional space.
\newblock {\em Comm. Math. Sci.}, 11(2):547--571, 2013.

\bibitem{framework}
Z.~Cai, Y.~Fan, and R.~Li.
\newblock A framework on moment model reduction for kinetic equation.
\newblock {\em SIAM J. Appl. Math.}, 75(5):2001--2023, 2015.

\bibitem{Cai2018}
Z.~Cai and M.~Torrilhon.
\newblock Numerical simulation of microflows using moment methods with
  linearized collision operator.
\newblock {\em J. Sci. Comput.}, 74:336--374, 2018.

\bibitem{Adaptive2021}
Z.~Cai and Y.~Wang.
\newblock Numerical solver for the {B}oltzmann equation with self-adaptive
  collision operators.
\newblock {\em arXiv:2102.08559}, 2021.

\bibitem{Cercignani1988}
C.~Cercignani.
\newblock {\em The {B}oltzmann Equation and Its Applications}.
\newblock Springer-Verlag, New York, 1988.

\bibitem{Dimarco2014}
G.~Dimarco and L.~Pareschi.
\newblock Numerical methods for kinetic equations.
\newblock {\em Acta Numerica}, 23:369--520, 2014.

\bibitem{Sparta}
M.~Gallis, J.~Torczynski, S.~Plimpton, D.~Rader, T.~Koehler, and J.~Fan.
\newblock Direct simulation monte carlo: the quest for speed.
\newblock {\em AIP Conference Proceedings}, 1628(1):27--36, 2014.

\bibitem{Hu2016}
I.~Gamba, J.~Haack, C.~Hauck, and J.~Hu.
\newblock {A fast spectral method for the {B}oltzmann collision operator with
  general collision kernels}.
\newblock {\em SIAM J. Sci. Comput.}, 39(14):B658--B674, 2017.

\bibitem{Jin2019}
I.~Gamba, S.~Jin, and L.~Liu.
\newblock Asymptotic-preserving schemes for two-species binary collisional
  kinetic system with disparate masses i: time discretization and asymptotic
  analysis.
\newblock {\em Comm. Math. Sci.}, 17:1257--1289, 2019.

\bibitem{Gamba2018}
I.~Gamba and S.~Rjasanow.
\newblock {G}alerkin-{P}etrov approach for the {B}oltzmann equation.
\newblock {\em J. Comput. Phys.}, 366:341--365, 2018.

\bibitem{goldstein1989}
D.~Goldstein, B.~Sturtevant, and J.~E. Broadwell.
\newblock Investigations of the motion of discrete-velocity gases.
\newblock {\em Progress in Astronautics and Aeronautics}, 117:100--117, 1989.

\bibitem{Grad}
H.~Grad.
\newblock On the kinetic theory of rarefied gases.
\newblock {\em Comm. Pure Appl. Math.}, 2(4):331--407, 1949.

\bibitem{Haack2021}
J.~Haack, C.~Hauck, C.~Klingenberg, M.~Pirner, and S.~Warnecke.
\newblock A consistent {BGK} model with velocity-dependent collision frequency
  for gas mixtures.
\newblock {\em J. Stat. Phys.}, 184(31), 2021.

\bibitem{Haack2017}
J.~Haack, C.~Hauck, and M.~Murillo.
\newblock A conservative entropic multispecies {BGK} model.
\newblock {\em J. Stat. Phys.}, 168:826--856, 2017.

\bibitem{Harris1971}
S.~Harris.
\newblock {\em An introduction to the theory of the {B}oltzmann equation}.
\newblock Holt, Rinehart and Winston, Inc., 1971.

\bibitem{HLL}
A.~Harten, P.~Lax, and B.~Leer.
\newblock On upstream differencing and {G}odunov-type schemes for hyperbolic
  conservation laws.
\newblock {\em SIAM Rev.}, 25(1):35--61, 1983.

\bibitem{Burnett}
Z.~Hu and Z.~Cai.
\newblock Burnett spectral method for high-speed rarefied gas flows.
\newblock {\em SIAM J. Sci. Comput.}, 42(5):B1193--B1226, 2020.

\bibitem{ZhichengHu2019}
Z.~Hu, Z.~Cai, and Y.~Wang.
\newblock Numerical simulation of microflows using {H}ermite spectral methods.
\newblock {\em SIAM J. Sci. Comput.}, 42(1):B105--B134, 2020.

\bibitem{Shashank2019}
S.~Jaiswal, A.~Alexeenko, and J.~Hu.
\newblock A discontinuous {G}alerkin fast spectral method for the multi-species
  {B}oltzmann equation.
\newblock {\em Comput. Methods Appl. Mech. Engrg.}, 352:56--84, 2019.

\bibitem{Klingenberg2017}
C.~Klingenberg, M.~Pirner, and G.~Puppo.
\newblock A consistent kinetic model for a two-component mixture with an
  application to plasma.
\newblock {\em Kinet. Relat. Models}, 10(2):445--465, 2017.

\bibitem{Koellermeier2017}
J.~Koellermeier and M.~Torrilhon.
\newblock Numerical study of partially conservative moment equations in kinetic
  theory.
\newblock {\em Commun. Comput. Phys.}, 21(4):981–1011, 2017.

\bibitem{Koura}
K.~Koura and H.~Matsumoto.
\newblock Variable soft sphere molecular model for inverse-power-law or
  {L}ennard-{J}ones potential.
\newblock {\em Phys. Fluids A}, 3(10), 1991.

\bibitem{krook1977exact}
M.~Krook and T.~T. Wu.
\newblock Exact solutions of the {B}oltzmann equation.
\newblock {\em Phys. Fluids}, 20(10):1589--1595, 1977.

\bibitem{FPL2020}
R.~Li, Y.~Ren, and Y.~Wang.
\newblock Hermite spectral method for {F}okker-{P}lanck-{L}andau equation
  modeling collisional plasma.
\newblock {\em J. Comput. Phys.}, 434:110235, 2021.

\bibitem{FPL2018}
R.~Li, Y.~Wang, and Y.~Wang.
\newblock Approximation to singular quadratic collision model in
  {F}okker-{P}lanck-{L}andau equation.
\newblock {\em SIAM J. Sci. Comput.}, 42(3):B792--B815, 2020.

\bibitem{Liu2020}
C.~Liu and K.~Xu.
\newblock A unified gas-kinetic scheme for micro flow simulation based on
  linearized kinetic equation.
\newblock {\em Adv. Aerodyn.}, 2(21), 2020.

\bibitem{Weno}
X.~Liu, S.~Osher, and T.~Chan.
\newblock Weighted essentially non-oscillatory schemes.
\newblock {\em J. Comput. Phys.}, 115:200--212, 1994.

\bibitem{Maxwell1878}
J.~Maxwell.
\newblock On stresses in rarefied gases arising from inequalities of
  temperature.
\newblock {\em Proc. R. Soc. Lond.}, 27:304--308, 1878.

\bibitem{Mouhot}
C.~Mouhot and L.~Pareschi.
\newblock Fast algorithms for computing the {B}oltzmann collision operator.
\newblock {\em Math. Comp.}, 75(256):1833--1852, 2006.

\bibitem{Panferov2002}
A.~Panferov and A.~Heintz.
\newblock {A new consistent discrete-velocity model for the {B}oltzmann
  equation}.
\newblock {\em Math. Method Appl. Sci.}, 25(7):571--593, 2002.

\bibitem{Pareschi1996}
L.~Pareschi and B.~Perthame.
\newblock A {F}ourier spectral method for homogeneous {B}oltzmann equa- tions.
\newblock {\em Transport Theor. Stat.}, 25:369--382, 1996.

\bibitem{Struchtrup}
H.~Struchtrup.
\newblock {\em Macroscopic Transport Equations for Rarefied Gas Flows:
  Approximation Methods in Kinetic Theory}.
\newblock Springer, 2005.

\bibitem{Approximation2019}
Y.~Wang and Z.~Cai.
\newblock Approximation of the {B}oltzmann collision operator based on hermite
  spectral method.
\newblock {\em J. Comput. Phys.}, 397:108815, 2019.

\bibitem{Wulei2015}
L.~Wu, J.~Zhang, J.~Reese, and Y.~Zhang.
\newblock A fast spectral method for the {B}oltzmann equation for monatomic gas
  mixtures.
\newblock {\em J. Comput. Phys.}, 298:602--621, 2015.

\end{thebibliography}


\end{document}